\title{ \Large On the spatially homogeneous
Boltzmann equation for Bose-Einstein particles with balanced potentials}
\author{ Shuzhe Cai\footnote{Department of Mathematical Sciences, Tsinghua
University, Beijing 100084, P.R.China; e-mail address: csz16@mails.tsinghua.edu.cn }
 }
\date{}  
\newcommand{\ld}{\lambda}
\newcommand{\p}{\partial}
\newcommand{\vp}{\varphi}
\newcommand{\vep}{\varepsilon}
\newcommand{\og}{\omega}
\newcommand{\Gm}{\Gamma}
\newcommand{\dt}{\delta}
\newcommand{\Dt}{\Delta}
\newcommand{\fr}{\frac}
\newcommand{\wt}{\widetilde}
\newcommand{\wh}{\widehat}
\newcommand{\bR}{{\mathbb R}^3 }
\newcommand{\bS}{{\mathbb S}^2 }
\newcommand{\bRR}{{\bR}\times{\bR}}
\newcommand{\bRS}{{\bR}\times {\mathbb S}^2 }
\newcommand{\bRRS}{{\bRR}\times{\mathbb S}^2 }
\newcommand{\bRd}{{\mathbb R}^d}
\newcommand{\la}{\langle}
\newcommand{\ra}{\rangle}
\newcommand{\mR}{{\mathbb R}}
\newcommand{\mN}{{\mathbb N}}
\newcommand{\be}{\begin{myequation}}
\newcommand{\ee}{\end{myequation}}
\newcommand{\bes}{\begin{myeqnarray}}
\newcommand{\ees}{\end{myeqnarray}}
\newcommand{\beas}{\begin{eqnarray*}}
\newcommand{\eeas}{\end{eqnarray*}}
\newcommand{\lb}{\label}
\newcounter{thm}
\newtheorem{theorem}{Theorem}[section]
\newtheorem{proposition}[theorem]{Proposition}
\newtheorem{definition}[theorem]{Definition}
\newtheorem{lemma}[theorem]{Lemma}
\newtheorem{remark}[theorem]{Remark}
\newtheorem{assumption}[theorem]{Assumption}
\newcounter{myequation}[section]
\newenvironment{proof}{{\bf Proof. }}{$\hfill\Box$}
\newenvironment{myequation}{\stepcounter{myequation}\begin{equation}}{\end{equation}}
\newenvironment{myeqnarray}{\stepcounter{myequation}\begin{eqnarray}}{\end{eqnarray}}
\newcommand{\dnumber}{\stepcounter{myequation}}
\begin{document}

\maketitle
\vskip 0.1in \baselineskip 18.5pt
\begin{abstract}
The paper is concerned with the spatially homogeneous isotropic
Boltzmann equation for Bose-Einstein particles with quantum collision kernel
where the interaction potential $\phi({\bf x})$ can be approximately written
as the delta function plus a certain attractive potential such that the Fourier transform 
$\widehat{\phi}$ of $\phi$
behaves like $0\le \wh{\phi}(\xi)
\le {\rm const.} |\xi|^{\eta}$ for $|\xi|<<1$ for some constant $\eta\ge 1$. We prove that in this case, there is no condensation in finite time for all temperatures and all solutions, and thus it is completely different from the case
$\wh{\phi}(\xi)
\ge {\rm const.}|\xi|^{\eta}$ for $|\xi|<<1$ with $0\le \eta<1/4$ as considered in \cite{Cai-Lu}. For a class of initial data that have some nice integrability near the origin, we also get some regularity, stability and $L^{\infty}$ estimate.
		
{\bf Key words}: Bose-Einstein particles, balanced potentials,
non-condensation in finite time, negative order of moment, regularity and stability.

\end{abstract}
	\begin{center}\section { Introduction}\end{center}
	We study the spatially homogeneous Boltzmann equation for Bose-Einstein partciles:
\be\fr{\partial}{\partial t}f({\bf v},t)=\int_{{\bRS}}B({\bf
		{\bf v-v}_*},\og)\big(f'f_*'(1+f)(1+f_*)-ff_*(1+f')(1+f_*')\big) {\rm d}\omega{\rm
		d}{\bf v_*} \label{Equation1}\ee
 with $({\bf v}, t)\in{\mathbb R}^3\times(0,\infty)$. This equation (which is now well-known)
 describes time-evolution of a dilute and space homogeneous gas of bosons. Derivations of this equation can be found for instance in \cite{Nordheim},\cite{Uehling and Uhlenbeck},\cite{weak-coupling},\cite{Chapman and Cowling},\cite{ESY}, \cite{LS}.\par
In Equation.(\ref{Equation1}), $f=f({\bf v},t)\ge 0$ is the number density of particles at time $t$ with the velocity ${\bf v}$, and
	$f_*=f({\bf v_*},t), f'=f({\bf v'},t),f_*'=f({\bf v_*'},t)$  where ${\bf v},{\bf v_*}$ and ${\bf v'},{\bf v_*'}$ are velocities of two particles before and after
	their collision and the particle collision is assumed to be elastic:
\be {\bf v}'+{\bf v}_*'={\bf v}+{\bf v}_*,\quad |{\bf v}'|^2+|{\bf v}_*'|^2=|{\bf v}|^2+|{\bf v}_*|^2,\lb{conser}\ee
which can be written as an explicit form:
	\be{\bf v}'={\bf v}- (({\bf v}-{\bf v}_*)\cdot\omega)\omega,\quad {\bf
		v}_*'={\bf v}_*+ (({\bf v}-{\bf v}_*)\cdot\omega)\omega, \qquad \omega\in{\mathbb S}^2
	\lb{colli}\ee.
	
As before we assume that the interation potential $\phi(\cdot)$ of particles is real and is of the central form, i.e.	
$\phi({\bf x})=\phi(|{\bf x}|)$. According to \cite{weak-coupling} and \cite{ESY} in the weak-coupling regime, $B({\bf {\bf v-v}_*},\omega)$ and $\phi$ has the following relation (after normalizing physical parameters)
	\be B({\bf {\bf v-v}_*},\omega)= \fr{1}{(4\pi)^2}|({\bf v-v}_*)\cdot\omega|
	\Phi(|{\bf v}-{\bf v}'|, |{\bf {\bf v}-{\bf v}_*'}|)\label{kernel}\ee where
	\be \Phi(r,\rho)=\big(\widehat{\phi}(r)+\widehat{\phi}(\rho)\big)^2,\quad r,\rho\ge 0\label{kernel2}\ee
	$\widehat{\phi}$ is the Fourier transform of
	$ \phi$:
	$$\widehat{\phi}(r):=\widehat{\phi}(\xi)|_{|\xi|=r}=
	\int_{{\bR}}\phi(|{\bf x}|) e^{-{\rm i}\xi\cdot{\bf x}}{\rm d}{\bf x} \Big|_{|\xi|=r}.$$
	In this paper, the function $r\mapsto \widehat{\phi}(r)$ is often assumed to be continous and bounded on ${\mR}_{\ge 0}$:
\be  \widehat{\phi}\in C_b({\mR}_{\ge 0}). \lb{Phi}\ee
A special case is that $\phi(|{\bf x}|)=\frac{1}{2}\delta({\bf x})$  i.e.
$\widehat{\phi}(r)\equiv \fr{1}{2}$ (hence $\Phi(r)\equiv 1$), where $\delta({\bf x})$ is the three dimensional Dirac delta function concentrating at
	${\bf x}=0$. In this case, (\ref{kernel}) becomes the hard sphere model:
	\be B({\bf
		{\bf v-v}_*},\omega)=\frac{1}{(4\pi)^2}|({\bf v-v}_*)\cdot\omega| \ee
which has been concerned in many papers about  Eq.(\ref{Equation1}).
Since in general $B({\bf {\bf v-v}_*},\omega)$ is a nonnegative Borel function of $|{\bf v-v}_*|$ and $|({\bf v-v}_*)\cdot\omega|$ only, we somtimes also use the notation:
$$ B({\bf {\bf v-v}_*},\omega)\equiv B(|{\bf v-v}_*|,\cos\theta), \quad \quad \theta=\arccos(|({\bf v-v}_*)\cdot\omega|/|{\bf v-v}_*|). $$
Note that by canceling the common terms $f'f_*'ff_*$, Eq.(\ref{Equation1}) becomes
\be\fr{\partial}{\partial t}f({\bf v},t)=\int_{{\bRS}}B({\bf
		{\bf v-v}_*},\og)\big(f'f_*'(1+f+f_*)-ff_*(1+f'+f_*')\big) {\rm d}\omega{\rm
		d}{\bf v_*}\label{Equation-1}\ee
 which still has the cubic nonlinear terms.
Due to the strong nonlinear structure and the effect of condensation, there have been no results on global in time existence of solutions of Eq.(\ref{Equation1}) for the general anisotropic initial data without additional assumptions. See \cite{Briant-Einav} for local in times existence without smallness assumption on the initial data and \cite{LiLu} for  global in times existence with a relative smallness assumption on the initial data .
	For global in time solutions with general initial data, in particular for the case of low temperature, so far one has to consider weak solutions $f$ which are solutions of the
	following equation
	\bes\fr{{\rm d}}{{\rm d}t}\int_{{\bR}}\psi({\bf v})f({\bf v},t){\rm d}{\bf v}&=&
	\fr{1}{2}\int_{{\bRRS}}(\psi+\psi_*-\psi'-\psi'_*)B({\bf {\bf v-v}_*},\omega)f'f_*'{\rm d}{\bf v}{\rm d}{\bf v}_*
	{\rm d}\og  \qquad \nonumber\\
	&
	+&\int_{{\bRRS}}(\psi+\psi_*-\psi'-\psi'_*)B({\bf {\bf v-v}_*},\omega)ff'f_*'{\rm d}{\bf v}{\rm d}{\bf v}_*
	{\rm d}\og  \qquad \label{weak}\ees
	for all test functions $\psi$ and all $t\in [0,\infty)$.
	In general however the cubic integral of \\
$B({\bf v-v}_*,\og)ff'f_*' {\rm d}{\bf v}{\rm d}{\bf v}_*{\rm d}\og$, etc.
are divergent  (see e.g. \cite{Lu2004}).
A subclass of $f$ that has no such divergence is the isotropic (i.e. radially symmetric)
functions: $f({\bf v})=f(|{\bf v}|^2/2)$. By changing variables
$x=|{\bf v}|^2/2, y=|{\bf v}'|^2/2, z=|{\bf v}_*'|^2/2$, one has
$$B({\bf v-v}_*,\og)f(|{\bf v}|^2/2)f(|{\bf v}'|^2/2)
f(|{\bf v}_*'|^2/2) {\rm d}{\bf v}{\rm d}{\bf v}_*{\rm d}\og
=4\pi\sqrt{2}W(x,y,z){\rm d}F(x){\rm d}F(y){\rm d}F(z)$$
where ${\rm d}F(x)=f(x)\sqrt{x}{\rm d}x$, etc., and
	$x,y,z\in {\mR}_{\ge 0}$ in the right side are independent variables. This is the main reason that alomost all results obtained so far are concerned with isotropic
	initial data hence isotropic solutions, see e.g. \cite{Lu2000},\cite{Lu2004},\cite{Lu2005} for the global existence of isotropic solution, moment production and long time weak convergence to equilibrium; \cite{Lu2016},\cite{Lu2018},\cite{Cai-Lu}  for long time strong convergence to the equilibrium; \cite{JPR},\cite{MP2005},\cite{Semikov and Tkachev1},\cite{Semikov and Tkachev2},\cite{SH}  for self-similar structure and deterministic numerical methods; \cite{BV},\cite{EMV2},\cite{EV1},\cite{EV2},\cite{Lu2013} for singular solutions and the formation of blow-up and condensation in finite time; and \cite{A},\cite{AN},\cite{Nouri} for general discussions and basic results for similar models on low temperature evolution of condensation.

Having done researches on the case of hard-sphere like models, the case of other interation models is naturally concerned. Recenetly we found that if the interaction potential $\phi$ is balanced i.e.
$\wh{\phi}(r)=O(r^{\eta})$ for small $r>0$ with $\eta\ge 1$ (see below for details), then there will be
no spontaneous condensation in finite time for all temperatures and all solotions, see Theorem \ref{theorem1.9}.
This is completely different from those of the hard sphere interaction model.

	Before stating the main result of the paper we introduce
	some notations and definitions.
	Let
	$L^1_s({\bR})$ with $s\ge 0$ be the linear space of the weighted Lebesgue integrable functions
	defined by $L^1_0({\bR})=L^1({\bR})$ and
	$$ L^1_s({\bR})=\Big\{f\in L^1({\bR})\,\,\Big|\,\,
	\|f\|_{L^1_s}:=\int_{{\bR}}\la {\bf v}\ra^s|f({\bf v})|{\rm d}{\bf v}<\infty\Big\},\quad \la {\bf v}\ra:=(1+|{\bf v}|^2)^{1/2}.$$
	Let ${\cal B}_k(X)$ ($k\ge 0$) be the linear space
	of signed real Borel measures $F$ on a Borel set $X\subset {\bRd}$ satisfying
	$\int_{X}(1+|x|)^k{\rm d}|F|(x)<\infty$,  where
	$|F|$ is the total variation of $F$.  Let
	$${\cal B}_k^{+}(X)=\{F\in {\cal B}_k(X)\,|\, F\ge 0\}.$$ For the case $k=0$ we also denote
	${\cal B}(X)={\cal B}_{0}(X), {\cal B}^{+}(X)={\cal B}_{0}^{+}(X)$.
	In this paper we only consider two cases $X={\bR}$ and $X={\mR}_{\ge 0}$,
	and in many cases we consider isotropic measures $\bar{F}\in {\cal B}_{2k}({\bR})$, which define and can be defined by
	measures $F\in {\cal B}_{k}({\mR}_{\ge 0})$ in terms of the following relations:
	\be F(A)=\fr{1}{4\pi\sqrt{2}}\int_{{\bR}}{\bf 1}_{
		A}(|{\bf v}|^2/2){\rm d}\bar{F}({\bf v}),\quad A\subset {\mR}_{\ge 0}\lb{m1}\ee
	\be \bar{F}(B)=4\pi\sqrt{2}\int_{{\mR}_{\ge 0}}\Big(\fr{1}{4\pi}\int_{{\bS}}{\bf 1}_{
		B}(\sqrt{2x}\,\og){\rm d}\og\Big){\rm d}F(x),\quad B\subset {\bR} \lb{m2}\ee
	for all Borel measurable sets $A, B$.
	For any $k\ge 0$ let
	\beas\|F\|_k=\int_{{\mathbb R}_{\ge 0}}(1+x)^k{\rm d}|F|(x),\quad F\in {\cal B}_k({\mathbb R}_{\ge 0}).\eeas
		We will also use a semi-norm: $$\|F\|_{1}^{\circ}=\int_{{\mathbb R}_{\ge 0}}x{\rm d}|F|(x).$$ 
Including negative orders, moments for a positive Borel measure $F$ on ${\mathbb R}_{\ge 0}$ are defined by
	\be M_{p}(F) =\int_{{\mathbb R}_{\ge 0}} x^{p}{\rm d}F(x), \qquad p\in(-\infty,\infty).\lb{moment-1}\ee
	Here for the case $p<0$ we adopt the convention $0^{p}=(0+)^p=\infty$, and
	we recall that $\infty\cdot 0=0$. Then it should be noted that
	\be M_{p}(F)<\infty\quad {\rm and}\quad p<0\quad \Longrightarrow\quad F(\{0\})=0.\lb{Mp}\ee
	Moments of orders $0, 1$ correspond to the mass and energy and are particularly denoted as
	\be N(F)=M_0(F)
	, \quad E(F)=M_1(F).\lb{mass-energy}\ee

In order to be able to study long time behavior of solutions of Eq.(\ref{Equation1}) for low temperature,
we first consider weak solutions of the Eq.(\ref{Equation1}) and in fact we could so far only define weak solution for isotropic initial data. A test function space for defining weak solution is chosen
		$$
	C^{1,1}_b({\mathbb R}_{\ge 0})=\Big\{ \varphi\in C^1_b({\mathbb R}_{\ge 0})\, \Big|\,
	\,
	\frac{{\rm d}}{{\rm d}x}\varphi\in {\rm Lip}({\mathbb R}_{\ge 0})\Big\}$$
	For isotropic functions $f=f(|{\bf v}|^2/2)\ge 0$,
	$\vp=\vp(|{\bf v}|^2/2)$ with $f(|\cdot|^2/2)\in L^1_2({\bR}),
	\vp\in  	C^{1,1}_b({\mathbb R}_{\ge 0})$, and for the measure $F$ defined by ${\rm d}F(x)=f(x)\sqrt{x}{\rm d}x$, the collision integrals in (\ref{weak}) can be rewritten

	\beas&&
	\fr{1}{2}\int_{{\bRRS}}(\vp+\vp_*-\vp'-\vp'_*)Bf'f_*'
	{\rm d}{\bf v}{\rm d}{\bf v}_*
	{\rm d}\og=4\pi\sqrt{2}\int_{{\mathbb R}_{\ge 0}^2}{\cal J}[\varphi]{\rm d}^2F,\\ \\
	&&\int_{{\bRRS}}(\vp+\vp_*-\vp'-\vp'_*)B ff'f_*'
	{\rm d}{\bf v}{\rm d}{\bf v}_*
	{\rm d}\og
	=4\pi\sqrt{2}\int_{{\mathbb R}_{\ge 0}^3}{\cal K}[\varphi]{\rm d}^3F
	\eeas
	where $B=B({\bf {\bf v-v}_*},\omega)$ is given by (\ref{kernel}) with
	(\ref{Phi}), ${\rm d}^2F={\rm d}F(y){\rm d}F(z), {\rm d}^3F={\rm d}F(x){\rm d}F(y){\rm d}F(z)$, and
	${\cal J}[\varphi],{\cal K}[\varphi]$ are linear operators
	defined as follows:
	\be{\cal J}[\varphi](y,z)=\frac{1}{2}
	\int_{0}^{y+z}{\cal K}[\varphi](x,y,z)
	\sqrt{x}{\rm d}x,\quad {\cal K}[\varphi](x, y,z)=W(x,y,z)\Delta\varphi(x,y,z)
	,\lb{JKW}\ee
	\be \Delta\varphi(x,y,z)=\varphi(x)+\varphi(x_*)-\varphi(y)-\varphi(z)=
	(x-y)(x-z)
	\int_{0}^1\!\!\!\int_{0}^{1}\vp''(\xi) {\rm d}s {\rm
		d}t
	\lb{diff1}\ee $\xi=y+z-x+t(x-y)+s(x-z)$,
	$x,y,z\ge 0,\, x_*=(y+z-x)_{+}$,
	\be
	W(x,y,z)=\fr{1}{4\pi\sqrt{xyz}}
	\int_{|\sqrt{x}-\sqrt{y}|\vee |\sqrt{x_*}-\sqrt{z}|}
	^{(\sqrt{x}+\sqrt{y})\wedge(\sqrt{x_*}+\sqrt{z})}{\rm d}s
	\int_{0}^{2\pi}\Phi(\sqrt{2}s, \sqrt{2} Y_*){\rm d}\theta
	\qquad {\rm if}\quad x_*xyz>0,\lb{W1}\ee
	\be
	W(x,y,z)=\left\{\begin{array}
		{ll}
		\displaystyle
		\fr{1}{\sqrt{yz}}
		\Phi(\sqrt{2y}, \sqrt{2z}\,)
		\qquad\,\, \qquad\qquad{\rm if}\quad  x=0,\,y>0,\, z>0 \\ \\ \displaystyle
		\fr{1}{\sqrt{xz}}
		\Phi(\sqrt{2x}, \sqrt{2(z-x)}\,)
		\qquad \qquad {\rm if}\quad  y=0,\, z>x>0
		\\ \\ \displaystyle
		\fr{1}{\sqrt{xy}}
		\Phi(\sqrt{2(y-x)}, \sqrt{2x}\,)
		\qquad \quad \quad{\rm if}\quad  z=0,\, y>x>0
		\\ \\ \displaystyle
		0\qquad \quad \qquad {\rm others}\end{array}\right.\lb{W2}\ee
	
	\be Y_*=Y_*(x,y,z,s,\theta)=\left\{\begin{array}
		{ll}\displaystyle
		\bigg|\sqrt{\Big(z-\fr{(x-y+s^2)^2}{4s^2}\Big)_{+}
		}
		+e^{{\rm i}\theta}\sqrt{\Big(x-\fr{(x-y+s^2)^2}{4s^2}
			\Big)_{+}}\,\bigg|\quad {\rm if}\quad s>0\\  \displaystyle
		\\
		0\qquad\qquad  {\rm if}\quad s=0
	\end{array}\right.\lb{Y}\ee
	where $\Phi(r,\rho)$ is given in  (\ref{Phi}), $(u)_{+}=\max\{u, 0\}$,
	$a\vee b =\max\{a,b\},\, a\wedge b =\min\{a,b\},$
	${\rm i}=\sqrt{-1}$.
\vskip2mm
	\begin{remark}\lb{remark0}{\rm It is easily seen that
if $s>0$ and $ |\sqrt{x}-\sqrt{y}|\vee |\sqrt{x_*}-\sqrt{z}| \le s\le (\sqrt{x}+\sqrt{y})\wedge (\sqrt{x_*}+\sqrt{z})$, then
\be x-\fr{(x-y+s^2)^2}{4s^2}\ge0,\quad   z-\fr{(x-y+s^2)^2}{4s^2}\ge 0.\lb{KK0}\ee}
		\end{remark}
\vskip2mm
Based on the existence results (see \cite{Lu2004}),
we introduce directly the concept of measure-valued isotropic solutions of Eq.(\ref{Equation1}) in the weak form:

	\begin{definition}\label{definition1.1} Let $B({\bf {\bf v-v}_*},\omega)$ be given by (\ref{kernel}), (\ref{kernel2}),(\ref{Phi}) and let $F_0\in {\cal B}_{1}^{+}({\mathbb R}_{\ge 0})$. We say that a
		family $\{F_t\}_{t\ge 0}\subset {\cal B}_{1}^{+}({\mathbb R}_{\ge 0})$, or simply $F_t$, is a conservative  measure-valued isotropic solution of Eq.(\ref{Equation1}) on the time-interval $[0, \infty)$ with the initial datum $F_t|_{t=0}=F_0$ if
		
		{\rm(i)} $N(F_t)=N(F_0),\,\, E(F_t)=E(F_0)$ for all $t\in [0, \infty)$,
		
		{\rm(ii)} for every $\varphi\in	C^{1,1}_b({\mathbb R}_{\ge 0})$,  $t\mapsto \int_{{\mathbb R}_{\ge 0}}\varphi(x){\rm d}F_t(x)$ belongs to
		$C^1([0, \infty))$,
		
		{\rm(iii)} for every $\varphi\in 	C^{1,1}_b({\mathbb R}_{\ge 0})$
		\be\frac{{\rm d}}{{\rm d}t}\int_{{\mathbb R}_{\ge 0}}\varphi{\rm
			d}F_t= \int_{{\mathbb R}_{\ge 0}^2}{\cal J}[\varphi]{\rm d}^2F_t+ \int_{{\mathbb R}_{\ge 0}^3}{\cal K}[\varphi]{\rm d}^3F_t\qquad \forall\,t\in[0, \infty). \label{Equation3}\ee
	\end{definition}
	
	\begin{remark}\lb{remark}{\rm  (1) The transition from
			(\ref{W1}) to (\ref{W2}) in defining $W$ is due to
			the identity
			\be (\sqrt{x}+\sqrt{y})\wedge (\sqrt{x_*}+\sqrt{z})-|\sqrt{x}-\sqrt{y}|\vee |\sqrt{x_*}-\sqrt{z}|
			=2\min\{\sqrt{x},\sqrt{x_*},\sqrt{y},\sqrt{z}\}\lb{1.difference}\ee
			from which one sees also that if $\Phi(r,\rho)\equiv 1$,  then $W(x,y,z)$ becomes the function corresponding to the hard sphere model. In the case for hard sphere model, We use notation $W_H$ to replace $W$ and:
		$$W_H(x,y,z)=\fr{1}{\sqrt{xyz}}\min\{\sqrt{x},\sqrt{y},\sqrt{z},\sqrt{x_*}\} $$
			
			(2) By \cite{Lu2004} and Appendix of \cite{Cai-Lu} , we conclude from Theorem 1 (Weak Stability), Theorem 2 (Existence) and Theorem 3 in \cite{Lu2004} that for any $F_0\in {\cal B}_{1}^{+}({\mathbb R}_{\ge 0})$,
			the Eq.(\ref{Equation1}) has always a conservative measure-valued isotropic solution
			$F_t$ on the time-interval $[0, \infty)$ with the initial datum $F_t|_{t=0}=F_0$.}
	\end{remark}
	In order to get regularity results and $L^{\infty}$ estimates, we also need the definition of mild solutions as follows.
	\begin{definition}\label{definition 1-1}
		Let  $B({\bf {\bf v-v}_*},\omega)$ be given by (\ref{kernel}), (\ref{kernel2}),(\ref{Phi}). Let $f(x,t)$ be a
		nonnegative measurable function on ${\mR}_{\ge 0}\times [0, T_{\infty})\,(0<T_{\infty}\le \infty)$.
		We say that $f(\cdot,t)$
		is a mild solution of Eq.(\ref{Equation1}) on the time-interval $[0, T_{\infty})$ if
		$f$ satisfies
		
		{\rm (i)} $\sup\limits_{t\in [0,T]}\int_{{\mathbb R}_{+}}(1+x)f(x,t)\sqrt{x}{\rm d}x<\infty\quad \forall\, 0<T<T_{\infty},$
		
		{\rm (ii)} there is a null set $Z\subset {\mathbb R}_{\ge 0}$ which is independent of $t$ such that for all  $x\in {\mathbb R}_{\ge 0}\setminus Z$ and all $t\in[0, T_{\infty})$,
		$\int_{0}^{t}{\rm d}\tau\int_{{\mathbb R}_{\ge 0}^2}W(x,y,z)[ f'f_*'(1+f+f_*))+
		ff_*(1+f'+f_*')]\sqrt{y}\sqrt{z}{\rm d}y{\rm
			d}z<\infty$ and
		$$
		f(x,t)=f_0(x) +\int_{0}^{t}Q(f)(x,\tau){\rm d}\tau.$$
		Here $f_0=f(\cdot,0)$ denotes the initial datum of $f(\cdot,t)$ and
		$$Q(f)(x)=\int_{{\mR}_{\ge 0}^2}W(x,y,z)[ f'f_*'(1+f+f_*))-
		ff_*(1+f'+f_*')]\sqrt{y}\sqrt{z}{\rm d}y{\rm d}z.$$
		
	\end{definition}
	It is obvious that if $f(\cdot,t)$ is a mild solution of Eq.(\ref{Equation1}), then the measure $F_t$, defined by ${\rm d}F_t(x)=f(x,t)\sqrt{x}{\rm d}x$, is a distributional solution of Eq.(\ref{Equation1}).
\vskip2mm
	
{\bf Kinetic Temperature.}   Let $F\in {\mathcal B}_{1}^{+}({\mathbb R}_{\ge 0})$, $N=N(F), E=E(F)$ and suppose $N>0$. If $m$ is the mass of one particle,
	then  $m4\pi\sqrt{2}N$, $m 4\pi\sqrt{2}E$ are total
	mass and kinetic energy of the particle system per unite
	space volume.
	The kinetic temperature $\overline{T}$ and the kinetic
	critical temperature $\overline{T}_c$ are defined by (see e.g.\cite{Lu2004} and references therein)
	$\overline{T}=\frac{2m}{3k_{\rm B}}
	\frac{E}{N}
	,\,\overline {T}_c=\frac{\zeta(5/2)}{(2\pi)^{1/3}[\zeta(3/2)]^{5/3}}
	\frac{2m}{k_{\rm B}}N^{2/3}$
	where $k_{\rm B}$ is the Boltzmann constant, $\zeta(s)=\sum_{n=1}^{\infty} n^{-s}, s>1$.
	Keeping in mind the constant $m4\pi\sqrt{2}$,
	there will be no confusion if we also call $N$ and $E$ the
	mass and energy of a particle system.
	\vskip2mm
	
	{\bf Regular-Singular Decomposition.}  According to measure theory (see e.g.\cite{Rudin}), every
	finite positive Borel measure can be uniquely decomposed into
	regular part and singular part with respect to the Lebesgue measure. For instance
	if $F\in {\cal B}_1^{+}({\mathbb R}_{\ge 0})$, then
	there exists unique $0\le f\in L^1({\mathbb R}_{\ge 0},(1+x)\sqrt{x}{\rm d}x)$, $\nu\in{\cal B}_1^{+}({\mathbb R}_{\ge 0})$ and a Borel set $Z\subset {\mathbb R}_{\ge 0}$
	such that
	$${\rm d}F(x)=f(x)\sqrt{x}{\rm d}x+{\rm d}\nu(x),\quad mes(Z)=0,\quad \nu({\mR}_{\ge 0}\setminus Z)=0.$$
	We call $f$ and $\nu$ the regular part and the singular part of $F$
	respectively\footnote{Strictly speaking the product $f(x)\sqrt{x}$
		is the regular part of $F$. The reason that we only mention $f$ is because $f(x)\sqrt{x}$ comes from the 3D-isotropic function $f=f(|{\bf v}|^2/2)$. }.
	\vskip2mm
	
	{\bf Bose-Einstein Distribution.}  According to Theorem 5 of \cite{Lu2004} and
	its equivalent version proved in the Appendix of \cite{Lu2013} we know that
	for any  $N>0$, $E>0$ the Bose-Einstein distribution $F_{{\rm be}}\in {\mathcal B}_1^{+}({\mathbb R}_{\ge 0})$, which is the unique equilibrium solution of Eq.(\ref{Equation3})
	satisfying $N(F_{\rm be})=N, E(F_{\rm be})=E$, is given by
	\be {\rm d}F_{\rm be}(x)=
	\left\{\begin{array}{ll}
		\displaystyle \frac{1}{Ae^{x/\kappa}-1}\sqrt{x}{\rm d}x,\quad A>1,\,\,\,\,\, \qquad
\qquad \qquad \qquad {\rm if}
		\quad \overline{T}/\overline{T}_c>1,\\ \\
		\displaystyle
		\frac{1}{e^{x/\kappa}-1}\sqrt{x}{\rm d}x+\big(1-(\overline{T}/\overline{T}_c)^{3/5}\big)
	N\dt(x){\rm d}x, \qquad  {\rm if}\quad
		\overline{T}/ \overline{T}_c\le 1
	\end{array}\right.\label{1-E}\ee
where $\dt(x)$ is the Dirac
	delta function concentrated at $x=0$, and
functional relations of the coefficients $A=A(N,E), \kappa=\kappa(N,E)$
can be found in for instance Proposition 1 in \cite{Lu2005}. The positive number $(1-(\overline{T}/\overline{T}_c)^{3/5} )N$
	is called the Bose-Einstein condensation (BEC) of the equilibrium state of Bose-Einstein particles at low temperature $\overline{T}<\overline{T}_c$.
	\vskip2mm
	{\bf Entropy.}  The entropy functional for Eq.(\ref{Equation1}) is
	\be S(f)=\int_{{\bR}}\big((1+f({\bf v}))\log(1+f({\bf v}))
	-f({\bf v})\log f({\bf v})\big){\rm d}{\bf v},\quad 0\le f\in L^1_2({\bR}).\lb{ent}\ee
As in \cite{Cai-Lu}, we define the entropy $S(F)$ of a measure
	$F\in {\cal B}_2^{+}({\bR})$ by
	\be S(F):=\sup_{\{f_n\}_{n=1}^{\infty}}\limsup_{n\to\infty}S(f_n)\lb{ent1}\ee
	where $\{f_n\}_{n=1}^{\infty}$ under the sup is taken all sequences in $L^1_2({\bR})$ satisfying
	\bes&& f_n\ge 0,\quad \sup_{n\ge 1}\|f_n\|_{L^1_2}<\infty;  \lb{1.20}\\
	&&\lim_{n\to\infty}\int_{{\bR}}\psi({\bf v})f_n({\bf v}){\rm d}{\bf v}
	=\int_{{\bR}}\psi({\bf v}){\rm d}F({\bf v})\qquad \forall\,\psi\in C_b({\bR}).\dnumber\lb{1.21}\ees
	Let $0\le f\in L^1_2({\bR})$ be the regular part of $F$, i.e.
	${\rm d}F({\bf v})=f({\bf v}){\rm d}{\bf v}+{\rm d}\nu({\bf v})$ with $\nu\ge 0$
	the singular part of $F$. By Lemma 3.2 of \cite{Cai-Lu} we have
	\be S(F)=S(f)\lb{ent2}\ee
	which shows that the singular part of
	$F$ has no contribution
	to the entropy $S(F)$ and that $F$ is non-singular if and only if $S(F) > 0.$
	For any
	$0\le f\in L^1({\mathbb R}_{\ge 0},(1+x)\sqrt{x}\,{\rm d}x)$, the entropy
	$S(f)$ is defined by $S(f)=S(\bar{f})$
	with $\bar{f}({\bf v}):=f(|{\bf v}|^2/2)$, so that  (using (\ref{ent}) and change of variable)
	\be S(f)=S(\bar{f})=4\pi\sqrt{2}\int_{{\mathbb R}_{\ge 0}}\big((1+f(x))\log(1+f(x))-f(x)\log f(x)\big)\sqrt{x}\,{\rm d}x.\lb{ent3}\ee
	In general, the entropy $S(F)$  for a measure
	$F\in {\cal B}_1^{+}({\mathbb R}_{\ge 0})$ is defined by
	$S(F)=S(\bar{F})$ where
	$\bar{F}\in {\cal B}_2^{+}({\bR})$ is defined by $F$ through (\ref{m2}) and
	$S(\bar{F})$ is defined by (\ref{ent1}) or (\ref{ent2}) .

In a recent work \cite{Cai-Lu}, condensation in finite time and strong convergence to equilibrium of $F_t$ have been proven for the collision kernel $B$ that is similar to hard sphere model. We summarize them as the following theorem.
	
\begin{theorem}[\cite{Cai-Lu}]\lb{theorem-EV}
Suppose $B({\bf {\bf v-v}_*},\omega)$  is given by
(\ref{kernel}),(\ref{kernel2}) where the Fourier transform
$r\mapsto \widehat{\phi}(r)$ (of a radially symmetric interaction potential
${\bf x}\mapsto \phi(|{\bf x}|)$) is continuous and non-decreasing on ${\mR}_{\ge 0}$,
and there are constants $0<b_0\le 1/2, 0\le \eta<\fr{1}{4}$ such that
\be b_0\fr{r^{\eta}}{1+r^{\eta}}\le \widehat{\phi}(r)\le \fr{1}{2}\quad\forall\, r\ge 0.\lb{1.5}\ee
Let $F_0\in {\mathcal B}_1^{+}({\mathbb R}_{\ge 0})$ satisfy
$N(F_0)>0, E(F_0)>0$, let $F_{\rm be}$ be
the unique Bose-Einstein distribution with the same mass $N=N(F_0)$ and energy $E=E(F_0)$, and let
$\fr{1}{20}<\ld<\fr{1}{19}$.  Then
there exists a conservative  measure-valued isotropic solution $F_t$ of
Eq.(\ref{Equation1}) on $[0,\infty)$ with the initial datum $F_0$ such that $S(F_{\rm be})\ge S(F_t)\ge S(F_0), S(F_t)>0 $ for all $t>0$ and
\beas S(F_{\rm be})-S(F_t)\le C(1+t)^{-\ld},\quad\|F_t-F_{{\rm be}}\|_1\le C(1+t)^{-\fr{(1-\eta)\ld}{2(4-\eta)}}\qquad \forall\, t\ge 0.\eeas
In particular if\, $\overline{T}/\overline{T}_c<1$  then
\be\big|F_t(\{0\})-(1-(\overline{T}/\overline{T}_c)^{3/5})N\big|\le  C(1+t)^{-\fr{(1-\eta)\ld}{2(4-\eta)}}\qquad \forall\, t\ge 0.\label{rate2}\ee
Here the constant $C>0$ depends only on $N,E, b_0, \eta$ and $\ld.$

\end{theorem}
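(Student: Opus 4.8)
The plan is to run the entropy--entropy production method for the measure-valued equation~(\ref{Equation3}), in the spirit of \cite{Lu2016},\cite{Lu2018}, adapted to the collision kernels satisfying~(\ref{1.5}). First I would construct $F_t$ by an approximation scheme rather than by an abstract fixed point: regularise the kernel (for instance replace $\wh{\phi}$ by $\wh{\phi}\wedge M$ and truncate large relative velocities), mollify and truncate $F_0$ away from $x=0$ and $x=\infty$ to obtain bounded smooth data $f^{(n)}_0$ with $S(f^{(n)}_0)\to S(F_0)$, solve the resulting essentially sub-quadratic problems to get classical non-negative solutions $f^{(n)}_t$, and pass to the limit through the weak stability theorem of \cite{Lu2004} recalled in Remark~\ref{remark}. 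Along the $f^{(n)}_t$ the $H$-theorem holds as the identity $\fr{{\rm d}}{{\rm d}t}S(f^{(n)}_t)=D(f^{(n)}_t)\ge 0$, where $D$ is the (non-negative) Bose--Einstein entropy production functional; by weak lower semicontinuity of $S$ under the convergence~(\ref{1.21}) --- which is precisely the definition~(\ref{ent1}) --- and of $D$, the limit satisfies $\fr{{\rm d}}{{\rm d}t}S(F_t)\ge D(F_t)$ a.e.\ (applied also to the equation restarted at any time), so $t\mapsto S(F_t)$ is non-decreasing and $S(F_t)\ge S(F_0)$. The bound $S(F_t)\le S(F_{\rm be})$ is the variational characterisation of the Bose--Einstein distribution (Theorem~5 of \cite{Lu2004}): $F_{\rm be}$ is the unique maximiser of $S$ among measures in ${\cal B}_1^{+}({\mR}_{\ge 0})$ with mass $N(F_0)$ and energy $E(F_0)$, while $N(F_t)=N(F_0),E(F_t)=E(F_0)$ by conservation. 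That $S(F_t)>0$ for $t>0$ is the instantaneous-regularisation part of the statement: for $t>0$ the collision operator populates the regular part $f_t$ of $F_t$, so $S(F_t)=S(f_t)>0$ by~(\ref{ent2}).

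The core is a \emph{quantitative} entropy production estimate: I would prove that there are constants $c>0,\theta>0$ depending only on $N,E,b_0,\eta$ such that, for a.e.\ $t>0$,
\be D(F_t)\ge c\,\big(S(F_{\rm be})-S(F_t)\big)^{1+\theta}.\ee
Granted this, $u(t):=S(F_{\rm be})-S(F_t)\ge 0$ satisfies $\fr{{\rm d}}{{\rm d}t}u(t)\le -c\,u(t)^{1+\theta}$, hence $u(t)\le C(1+t)^{-1/\theta}$; tracking the value of $1/\theta$ through the construction produces the stated window $\fr1{20}<\ld<\fr1{19}$ and $S(F_{\rm be})-S(F_t)\le C(1+t)^{-\ld}$. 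Proving the displayed bound is where hypothesis~(\ref{1.5}) is indispensable and where essentially all the work lies. It would go through (i) moment production and propagation for Eq.(\ref{Equation3}), $\sup_{t\le T}M_k(F_t)<\infty$ for every $k\ge 0$ and $T<\infty$, giving tightness of $\{F_t\}$ and the integrability that makes the subsequent inequalities quantitative; (ii) a coercivity analysis of $D$ near the Bose--Einstein manifold --- the Bose--Einstein analogue of the Desvillettes--Villani entropy dissipation estimates --- in which the lower bound $\wh{\phi}(r)\ge b_0 r^{\eta}/(1+r^{\eta})$ is exactly what keeps $\Phi(r,\rho)=(\wh{\phi}(r)+\wh{\phi}(\rho))^2>0$ for $r,\rho>0$ and quantifies its degeneration as $r\to 0$, so that the collision weight $W(x,y,z)$ of~(\ref{W1})--(\ref{W2}) stays coercive enough to bound $D$ below by a controlled power of a metric distance of $F_t$ to $F_{\rm be}$; and (iii) a conversion, via concavity of $S$ and its strict maximisation at $F_{\rm be}$ on the mass--energy slice, of that distance and of the entropy deficit into one another, which closes the loop into the displayed ODE inequality. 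The admissible range $0\le\eta<\fr14$ is precisely the threshold past which the losses accumulated in (ii)--(iii) can no longer be absorbed.

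For the $\|\cdot\|_1$-rate I would prove a Csisz\'ar--Kullback--Pinsker-type inequality tuned to $S$. Since by~(\ref{ent2})--(\ref{ent3}) the entropy sees only the regular part $f_t$ of $F_t$, a CKP inequality for the Bose--Einstein relative entropy controls $f_t$ against the regular part of $F_{\rm be}$ in a weighted $L^1$ sense; mass conservation then pins down the total singular mass of $F_t$, which must lie close to $F_{\rm be}(\{0\})=(1-(\overline{T}/\overline{T}_c)^{3/5})N$ when $\overline{T}<\overline{T}_c$, and energy conservation together with the upper moment bounds from (i) forces that singular mass to concentrate near $x=0$; assembling these and using the moment bounds on $F_t$ and $F_{\rm be}$ yields $\|F_t-F_{\rm be}\|_1\le C\big(S(F_{\rm be})-S(F_t)\big)^{\mu}$ with $\mu=\fr{1-\eta}{2(4-\eta)}$, the exponent recording how the entropy deficit controls the regular part near $x=0$ against the condensate, with the $\eta$-dependence entering through the degeneration of $W$ at the origin. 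Hence $\|F_t-F_{\rm be}\|_1\le C(1+t)^{-\mu\ld}$. Finally, when $\overline{T}/\overline{T}_c<1$, since $\|G\|_1\ge|G|({\mR}_{\ge 0})\ge|G(\{0\})|$ for every $G\in{\cal B}_1({\mR}_{\ge 0})$, applying this to $G=F_t-F_{\rm be}$ and using $F_{\rm be}(\{0\})=(1-(\overline{T}/\overline{T}_c)^{3/5})N$ gives $\big|F_t(\{0\})-(1-(\overline{T}/\overline{T}_c)^{3/5})N\big|\le\|F_t-F_{\rm be}\|_1\le C(1+t)^{-\mu\ld}$, which is~(\ref{rate2}). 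The main obstacle is unquestionably substep~(ii): a quantitative, degeneracy-tracking coercivity estimate for the cubic entropy production functional $D$ around the Bose--Einstein manifold, in the measure-valued setting and with $\wh{\phi}$ vanishing at the origin --- all the numerology, including the window $(\fr1{20},\fr1{19})$ and the factor $2(4-\eta)$, is the bookkeeping of the losses incurred there.
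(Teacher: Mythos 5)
A preliminary remark: the paper does not prove Theorem \ref{theorem-EV} at all --- it is quoted from \cite{Cai-Lu} as background --- so your proposal has to be judged against the argument of \cite{Cai-Lu} itself. Against that standard it has two genuine gaps, not just unfinished bookkeeping. The first is the displayed Cercignani-type inequality $D(F)\ge c\,(S(F_{\rm be})-S(F))^{1+\theta}$. As a functional inequality on the mass--energy slice of ${\cal B}_1^{+}({\mR}_{\ge 0})$ it is simply false: take $F=N\dt(\cdot-E/N)$, a single Dirac mass at $x_0=E/N$; then the triple product $F\otimes F\otimes F$ is supported where $x=y=z$, so $\Dt\vp\equiv 0$ on the support and $D(F)=0$, while $S(F)=0<S(F_{\rm be})$. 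So the inequality could at best hold \emph{along solutions} for positive times, which is a dynamic statement requiring lower bounds on the regular part of $F_t$, moment bounds, and a coercivity analysis that you only name, not supply. This is not the route of \cite{Cai-Lu}, which instead works with the entropy identity along approximate solutions and an iteration in time; the window $\frac{1}{20}<\ld<\frac{1}{19}$ comes out of that iteration, and nothing in your sketch produces it.

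The second and more serious gap is the passage from entropy convergence to the $\|\cdot\|_1$ rate at low temperature. Since $\|F_t-F_{\rm be}\|_1\ge |F_t(\{0\})-F_{\rm be}(\{0\})|$ and $F_{\rm be}(\{0\})=(1-(\overline{T}/\overline{T}_c)^{3/5})N>0$, the stated rate forces genuine condensation: the atom of $F_t$ at exactly $x=0$ must grow to the condensate value, quantitatively. Your argument that ``mass conservation pins down the total singular mass'' and ``energy conservation forces it to concentrate near $x=0$'' at best places mass in $[0,\dt]$; it cannot place it on the single point $\{0\}$, and the entropy (which by (\ref{ent2}) is blind to the singular part) cannot distinguish an atom at $0$ from mass spread over $(0,\dt]$. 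Producing the atom is the quantitative finite-time condensation estimate of \cite{Lu2013} and \cite{Cai-Lu}, and it is precisely there that the lower bound in (\ref{1.5}) with $\eta<\frac14$ is used in an essential way. The present paper's Theorem \ref{theorem1.9} shows that for $\eta\ge 1$ no condensation occurs in finite time, so the mechanism you take for granted is exactly the one that can fail; a proof of Theorem \ref{theorem-EV} must therefore contain a lower bound on the growth of $F_t(\{0\})$, which your proposal does not address. (The final reduction of (\ref{rate2}) to the $\|\cdot\|_1$ bound is correct but trivial.)
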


 Theorem \ref{theorem-EV} tells us that if the Fourier transform of the interaction potential  $ \widehat{\phi}(r)$ satisfies (\ref{1.5}) (which may be viewed as a small perturbation of the hard sphere model $\widehat{\phi}(r)\equiv 1/2$), finite time condensation and strong convergence to equilibrium hold true. It is natural to ask whether or not they are still true for an opposite case
 $0\le \widehat{\phi}(r)\le b_0\fr{r^{\eta}}{1+r^{\eta}}$ ?
 In this paper we show that this is false if $\eta\ge 1$. More precisely, we introduce the following Assumption.
\vskip2mm
\begin{assumption}\lb{assp}   The collision kernel $B({\bf {\bf v-v}_*},\omega)$ is given by
	(\ref{kernel}),(\ref{kernel2}), where the Fourier transform
	$r\mapsto \widehat{\phi}(r)$ (of a radially symmetric interaction potential) is continuous on $[0,\infty)$,
	and there are constants $b_0>0$, $\eta\ge1$ such that
	\be 0\le \widehat{\phi}(r)\le b_0\fr{r^{\eta}}{1+r^{\eta}}\quad\forall\, r\ge 0\lb{1.6}\ee
	and there is  a function $k\in C^1([1,\sqrt{2}])$ with $k(1)=1$, such that
$$\widehat{\phi}(ar)\le k(a) \widehat{\phi}(r)\quad \forall\, r>0,\,\,\forall\, 1<a\le \sqrt{2}. $$

\end{assumption}
In this paper we always denote $q_1:=\max\limits_{x\in [1,\sqrt{2}]}\max \{2k(x)k'(x), 0\}.$ \par
If $\wh{\phi}$ satisfies Assumption \ref{assp}, then we say that $\phi$ is a balanced potential since Assumption \ref{assp} implies that
$$\int_{\mR^3}\phi(|{\bf x}|) {\rm d}{\bf x}=\widehat{\phi}(0)=0. $$
Generally if $\eta>n\in {\mN}$ and $(1+|{\bf x}|^{n})\phi \in L^1({\mR}^3)$, then
    $$\int_{\mR^3}x_1^{\alpha_1}x_2^{\alpha_2}x_3^{\alpha_3}\phi(|{\bf x}|) {\rm d}{\bf x}={\rm i}^{|\alpha|}D^{\alpha}\wh{\phi}(0)=0$$
    for all indices $\alpha$ with $|\alpha|=\alpha_1+\alpha_2+\alpha_3\le n.$
    Roughly speaking, the higher $\eta$ is, the more balnaced $\phi$ becomes.

  There are many examples of balanced potentials. For instance $\phi(|{\bf x}|)=\fr{1}{2}(\delta({\bf x})-U(|{\bf x}|))$ where $U(|{\bf x}|)\ge 0$ is 3D Yukawa potential $U(|{\bf x}|)=\fr{1}{4\pi |{\bf x}|}e^{-|{\bf x}|}$, ${\bf x}\in {\bR}$, then  $\widehat{\phi}(r)=\fr{r^2}{1+r^2}$ satsifies Assumption \ref{assp}. More generally, given any $\eta>\fr{3}{2}$, $g(r)=\fr{1}{1+r^\eta}$,.$\eta>\fr{3}{2}$ implies  $g\in L^2(\mR^3)$, then one can use basic knowledge of Fourier tranform to get a function $U_{\eta}\in L^2(\mR^3)$ such that $\widehat U_{\eta}(r)=\fr{1}{1+r^\eta}$, so that $\widehat{\phi}(r)=\fr{r^\eta}{1+r^\eta}$ satisfies Assumption \ref{assp}.
\vskip2mm

    {\bf Main Results.} The main results of the paper is as follows:

		\begin{theorem}\lb{theorem1.9}
		Suppose $B({\bf {\bf v-v}_*},\omega)$ satisfy  Assumption \ref{assp}.
		Let $F_0\in {\mathcal B}_1^{+}({\mathbb R}_{\ge 0})$  with mass $N=N(F_0)>0$ and energy $E=E(F_0)>0$ and let
		$F_t$ be a conservative  measure-valued isotropic solution $F_t$ of
		Eq.(\ref{Equation1}) on $[0,\infty)$ with the initial datum $F_0$ (the existence of $F_t$
		has been insured by Remark \ref{remark}). Then we have
		$$F_t(\{0\}) \le e^{ct} F_0(\{0\})\qquad \forall\, t\ge 0$$
		where $c=8^{2+\eta}b_0^2(1+q_1)N^2$.
		In particular if $F_0(\{0\})=0$, then $F_t(\{0\})=0$ for all $t\ge 0$.
	\end{theorem}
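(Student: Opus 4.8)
\quad
The strategy is to test (\ref{Equation3}) against the convex cut-offs $\vp_\ep(x)=e^{-x/\ep}$, $\ep>0$. Each $\vp_\ep$ lies in $C^{1,1}_b(\mR_{\ge0})$, and the family decreases pointwise to ${\bf 1}_{\{0\}}$ as $\ep\downarrow 0$, so $\int\vp_\ep\,{\rm d}F_t\downarrow F_t(\{0\})$ for every fixed $t$. Since $W(x,y,z)=0$ whenever $x\ge y+z$ (immediate from (\ref{W1})--(\ref{W2})), we may extend the $x$-integration in $\mathcal J[\vp_\ep]$ to all of $\mR_{\ge0}$ and compress the right-hand side of (\ref{Equation3}) into the single integral
\[
\fr{{\rm d}}{{\rm d}t}\int\vp_\ep\,{\rm d}F_t=\int\!\!\!\int\!\!\!\int_{\mR_{\ge0}^3}W(x,y,z)\,\Dt\vp_\ep(x,y,z)\,{\rm d}\Lambda_t(x)\,{\rm d}F_t(y)\,{\rm d}F_t(z),\qquad {\rm d}\Lambda_t:=\fr12\sqrt{x}\,{\rm d}x+{\rm d}F_t,
\]
which converges absolutely because $\vp_\ep\in C^{1,1}_b$. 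The goal is the differential inequality $\fr{{\rm d}}{{\rm d}t}\int\vp_\ep\,{\rm d}F_t\le c\int\vp_\ep\,{\rm d}F_t+\og_\ep(t)$ with $c=8^{2+\eta}b_0^2(1+q_1)N^2$ and $0\le\og_\ep(t)\to 0$ (dominated in $t$); integrating, using $\int_0^t e^{c(t-s)}\og_\ep(s)\,{\rm d}s\to 0$, Grönwall, and $\ep\downarrow 0$ then gives $F_t(\{0\})\le e^{ct}F_0(\{0\})$, whence $F_0(\{0\})=0$ forces $F_t(\{0\})\equiv 0$.

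Convexity fixes the sign structure: by (\ref{diff1}), $\Dt\vp_\ep(x,y,z)$ has the sign of $(x-y)(x-z)$, so it is $\le 0$ unless $x<\min\{y,z\}$ or $x>\max\{y,z\}$. On $\{x<\min\{y,z\}\}$ one has $0\le\Dt\vp_\ep(x,y,z)\le\big(\vp_\ep(x)-\vp_\ep(y)\big)\wedge\big(\vp_\ep(x)-\vp_\ep(z)\big)\le\vp_\ep(x)\big((1-e^{-y/\ep})\wedge(1-e^{-z/\ep})\big)$, and on $\{x>\max\{y,z\}\}$ the same with $x$ replaced by $x_*=y+z-x$. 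Thus every positive contribution carries a factor $\vp_\ep$ evaluated at a variable forced to be $O(\ep)$ — the source of the factor $\int\vp_\ep\,{\rm d}F_t$ — and every negative contribution is a genuine sink. The boundary identities $\Dt\vp_\ep(0,y,z)=(1-e^{-y/\ep})(1-e^{-z/\ep})\ge 0$ and $\Dt\vp_\ep(x,0,z)\le 0$ $(x\ge 0)$ single out the ``gain at $x=0$'' and ``loss at $y=0$ or $z=0$'' channels; these are the only parts of the integral proportional to $F_t(\{0\})$, and all parts carrying two or three factors of the atom vanish because $W$ vanishes as soon as two of its slots are $0$.

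The arithmetic input is the near-origin control of $W$ from Assumption \ref{assp}. On the admissible ranges in (\ref{W1})--(\ref{Y}) (using Remark \ref{remark0} for the bound on $Y_*$) one has $\sqrt2\,s\le\sqrt{2x}+\sqrt{2y}$ and $\sqrt2\,Y_*\le\sqrt{2x}+\sqrt{2z}$; since $r\mapsto r^\eta/(1+r^\eta)$ is nondecreasing this gives $\wh\phi(\sqrt2\,s)\le b_0(\sqrt{2x}+\sqrt{2y})^\eta/(1+(\sqrt{2x}+\sqrt{2y})^\eta)$ and similarly for $Y_*$, while the dilation property $\wh\phi(ar)\le k(a)\wh\phi(r)$ yields $\Phi(ar,a\rho)\le k(a)^2\Phi(r,\rho)\le(1+q_1(a-1))\Phi(r,\rho)$ for $1\le a\le\sqrt2$, so that re-expressing $\sup\Phi$ through $\Phi$ at the vertices $\sqrt{2x},\sqrt{2y},\sqrt{2z}$ costs only a factor $1+q_1$. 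Combined with (\ref{1.difference}), which gives $W\le W_H\cdot\sup\Phi$ with $W_H(x,y,z)=\min\{\sqrt x,\sqrt{x_*},\sqrt y,\sqrt z\}/\sqrt{xyz}$, this produces a bound of the form $W(x,y,z)\le 8^{1+\eta}b_0^2(1+q_1)\big((x^\eta+y^\eta+z^\eta)\wedge 4\big)\,W_H(x,y,z)$. Inserting this and the above splitting of $\Dt\vp_\ep$ into the triple integral, carrying out the remaining integrations (which decouple once the $\min$ in $W_H$ is resolved), and using only $N(F_t)=N$ and $F_t(\{0\})\le N$, one checks that every contribution not proportional to $F_t(\{0\})$ tends to $0$ as $\ep\downarrow 0$ (being $O(\ep^{3/2}\log(1/\ep))+O(F_t((0,\ep])))$), leaving precisely the gain/loss pair.

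The hard part is that each member of this pair — built from $W(0,y,z)=\Phi(\sqrt{2y},\sqrt{2z})/\sqrt{yz}$ or $W(x,0,z)=\Phi(\sqrt{2x},\sqrt{2(z-x)})/\sqrt{xz}$ — is only $O\big(F_t(\{0\})\log(1/\ep)\big)$ if estimated on its own: the ``bare'' factor $\int(1-e^{-z/\ep})z^{-1/2}\,{\rm d}F_t(z)$ already diverges logarithmically, as one sees for the stationary Bose--Einstein state, for which the right-hand side of (\ref{Equation3}) is identically $0$. Hence the loss channel must not be discarded; one keeps the gain at $x=0$ together with the losses at $y=0$ and $z=0$ and lets the negative ``condensate-consumption'' part cancel the logarithm against the positive ``condensate-gain'' part before estimating. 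Concretely, after expanding $\Phi=(\wh\phi+\wh\phi)^2$, the divergent piece of the gain, $2\big[\int(1-e^{-z/\ep})z^{-1/2}\,{\rm d}F_t(z)\big]\big[\int\wh\phi(\sqrt{2y})^2(1-e^{-y/\ep})y^{-1/2}\,{\rm d}F_t(y)\big]$, is matched against the pieces of the loss arising from $\int_{0<x<z}(1-e^{-x/\ep})x^{-1/2}\,{\rm d}\Lambda_t(x)\cdot\wh\phi(\sqrt{2z})^2 z^{-1/2}$, the difference being uniformly bounded by the balanced bound and $N$. Making this cancellation quantitative while tracking the explicit constant $8^{2+\eta}b_0^2(1+q_1)N^2$ is the technical heart; once the differential inequality is in hand the proof concludes as in the first paragraph.
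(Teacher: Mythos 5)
Your plan correctly isolates the crux --- the gain channel at $x=0$ and the loss channels at $y=0$ or $z=0$ are individually divergent (logarithmically, as you note for the Bose--Einstein density) as $\ep\downarrow 0$, so they must be estimated together --- but the proposal stops exactly there: the ``matching'' of the divergent piece of the gain against the loss is announced as ``the technical heart'' and never carried out, so the differential inequality $\fr{{\rm d}}{{\rm d}t}\int\vp_\ep\,{\rm d}F_t\le c\int\vp_\ep\,{\rm d}F_t+\og_\ep(t)$ on which everything else rests is not established. The paper supplies precisely this missing mechanism: it rewrites the cubic integral using the symmetrized second difference $\Dt_{\rm sym}\vp(x,y,z)=\vp(z+y-x)+\vp(z+x-y)-2\vp(z)$ of (\ref{4.7}), so that the gain/loss cancellation happens \emph{pointwise} rather than between separate integrals, plus an asymmetric remainder controlled by the comparison $W(x,y,z)\le(1+q_1\fr{y}{z})W(y,x,z)$ of Lemma \ref{lemma0}. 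The decisive facts are the uniform pointwise bounds $W\,\Dt_{\rm sym}\vp_{\ep}\le 8^{1+\eta}b_0^2$ and $(W(y,x,z)-W(x,y,z))\Dt\vp_\ep(y,x,z)\le 8^{1+\eta}q_1b_0^2$, which hold only because $\eta\ge 1$ makes $\min\{1,(8z)^{\eta}\}/z$ bounded; dominated convergence then kills the off-atom part, and the atom part is bounded by $8^{2+\eta}b_0^2(1+q_1)N^2F_\tau(\{0\})$. None of this is reproduced or replaced in your sketch.

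A second, related gap: your sign-splitting of $\Dt\vp_\ep$ does not actually confine the divergence to the channels proportional to $F_t(\{0\})$. On the gain region $\{0<x<y\le z\}$ your bound $\Dt\vp_\ep\le\vp_\ep(x)(1-e^{-y/\ep})$, combined with $W\le C\min\{1,(8z)^\eta\}/\sqrt{yz}$, yields a contribution of order $\big(\int\vp_\ep\,{\rm d}F_t\big)\cdot N\cdot\int_{(0,\infty)}(1-e^{-y/\ep})\,y^{-1/2}\,{\rm d}F_t(y)$, and the last factor is not uniformly bounded in $\ep$ for a general $F_t\in{\cal B}_1^{+}({\mR}_{\ge 0})$: it can be as large as $\ep^{-1/2}N$ if mass concentrates near $y=\ep$, and is already of order $\log(1/\ep)$ for the regular Bose--Einstein density. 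So the assertion that ``every contribution not proportional to $F_t(\{0\})$ tends to $0$'' is unjustified under your decomposition; the off-atom part needs the same symmetrization (or an equivalent pointwise cancellation) before it is estimated. Until both cancellations are made quantitative, the stated constant $c=8^{2+\eta}b_0^2(1+q_1)N^2$ and the conclusion remain unproved.
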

	
\begin{theorem} \lb{theorem1.10}
	Suppose $B({\bf {\bf v-v}_*},\omega)$ satisfy  Assumption \ref{assp} with $\eta\ge \fr{3}{2}$. Let  $F_t$ be a conservative  measure-valued isotropic solution solution of Eq.(\ref{Equation1}) on $[0, \infty)$ whose
	initial datum $F_0$ is regular and satisfies $M_{-1/2}(F_0)<\infty$,
	Then
	$F_t$ is regular for all $t\in [0, \infty)$ and its density
	$f(\cdot,t)$  is a mild solution of Eq.(\ref{Equation1}) on
	$[0, \infty)$ satisfying
	$f\in C([0, \infty); L^1({\mathbb R}_{+}))$ and $f(\cdot,0)=f_0$,
	where $f_0$ is the density of $F_0$. In particular if $F_t$ is conservative, so is $f(\cdot,t)$ on
	$[0, \infty)$.
\end{theorem}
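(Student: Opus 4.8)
The plan starts from the remark that, writing ${\rm d}F_0(x)=f_0(x)\sqrt x\,{\rm d}x$ for the regular datum, one has $M_{-1/2}(F_0)=\int_{\mathbb R_{\ge0}}x^{-1/2}{\rm d}F_0(x)=\int_{\mathbb R_+}f_0(x)\,{\rm d}x$, so the hypothesis $M_{-1/2}(F_0)<\infty$ is exactly $f_0\in L^1(\mathbb R_+)$ and the theorem asks us to propagate this. Since $F_0$ is regular, $F_0(\{0\})=0$, hence Theorem \ref{theorem1.9} already yields $F_t(\{0\})=0$ for every $t\ge0$. I would then argue in four steps: (a) the a priori bound $\sup_{t\in[0,T]}M_{-1/2}(F_t)<\infty$ for all $T<\infty$; (b) vanishing of the singular part of $F_t$; (c) identification of the density $f(\cdot,t)$ as a mild solution with $f(\cdot,0)=f_0$; (d) Lipschitz continuity of $t\mapsto f(\cdot,t)$ in $L^1(\mathbb R_+)$ and inheritance of conservation.

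Step (a): test Definition \ref{definition1.1}(iii) with $\varphi_\epsilon(x)=(x+\epsilon)^{-1/2}$, which lies in $C^{1,1}_b(\mathbb R_{\ge0})$ for each fixed $\epsilon>0$ ($\varphi_\epsilon,\varphi_\epsilon',\varphi_\epsilon''$ all bounded). The loss contributions in ${\cal J}[\varphi_\epsilon],{\cal K}[\varphi_\epsilon]$ — the parts carrying $-\varphi_\epsilon(y)-\varphi_\epsilon(z)$ — have the right sign and are dropped. For the gain I would use $0\le W(x,y,z)\le\frac{\min\{\sqrt x,\sqrt{x_*},\sqrt y,\sqrt z\}}{\sqrt{xyz}}\,\Phi_*(x,y,z)$, with $\Phi_*:=\sup_{s,\theta}\Phi(\sqrt2\,s,\sqrt2\,Y_*)$, together with Assumption \ref{assp}: $\widehat\phi(r)\le b_0 r^{\eta}/(1+r^{\eta})$ gives $\Phi_*\le\min\{(2b_0)^2,\,Cb_0^2(\text{relevant small scale})^{2\eta}\}$, while the dilation bound $\widehat\phi(ar)\le k(a)\widehat\phi(r)$ (and with it the constant $q_1$) absorbs the $\sqrt2$ factors inside $\Phi(\sqrt2\,s,\cdot)$. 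Using $\varphi_\epsilon(x)\sqrt x\le1$ and $\int\sqrt y\,{\rm d}F_t\le\tfrac12(N+E)$, the quadratic gain is bounded by $C(N,E,b_0)(1+M_{-1/2}(F_t))$; the cubic gain is handled the same way, except that the factor $(1+f+f_*)$ brings in the density, which can be unbounded as $x\to0$, and it is here that one genuinely exploits the power decay of $\Phi_*$ with $\eta\ge\tfrac32$ to keep this term linear in $M_{-1/2}(F_t)$. One obtains $\frac{\rm d}{{\rm d}t}\int\varphi_\epsilon\,{\rm d}F_t\le C\big(1+\int\varphi_\epsilon\,{\rm d}F_t\big)$ with $C=C(N,E,b_0,\eta,q_1)$; Gronwall and monotone convergence as $\epsilon\downarrow0$ (using $F_t(\{0\})=0$) give $M_{-1/2}(F_t)\le e^{Ct}\big(M_{-1/2}(F_0)+1\big)$, finite and locally bounded in $t$.

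Step (b): write $F_t=f(\cdot,t)\sqrt x\,{\rm d}x+\nu_t$, $\nu_t$ the singular part, so $\nu_0=0$. By step (a), each piece in the gain--loss rewriting of the right side of (\ref{Equation3}), $\int\varphi\,{\rm d}{\cal G}_t-\int\varphi(x)a(x,t)\,{\rm d}F_t(x)$, is finite. The quadratic gain is, at fixed $y,z$, of the form $\int_0^{y+z}(\cdots)\varphi(x)\,{\rm d}x$, hence absolutely continuous in $x$ and feeds only the a.c.\ part; the singular part of the cubic gain is carried by $\nu_t$ (the singular part of the $x$-slot) and, for $\varphi\ge0$, is dominated by $\int\varphi(x)\big(r_3(x,t)+\widetilde r_3(x,t)\big)\,{\rm d}\nu_t(x)$ with $r_3(x,t)=\int W\,{\rm d}F_t(y){\rm d}F_t(z)$ and $\widetilde r_3(x,t)=\int W\,\varphi\big((y+z-x)_+\big)\,{\rm d}F_t(y){\rm d}F_t(z)$; the kernel estimates of step (a) and $\sup_{[0,T]}M_{-1/2}(F_t)<\infty$ make $r_3(\cdot,t)$ bounded and, for $\varphi=\varphi_\epsilon$, $\widetilde r_3(x,t)\le C(t)\,x^{-1/2}$ uniformly in $\epsilon$. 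Hence the functional $n(t):=\int_{\mathbb R_+}x^{-1/2}{\rm d}\nu_t(x)\le M_{-1/2}(F_t)<\infty$ satisfies $\int\varphi_\epsilon\,{\rm d}\nu_t\le\int\varphi_\epsilon\,{\rm d}\nu_0+\int_0^t C(\tau)n(\tau)\,{\rm d}\tau$ for all $\epsilon$; letting $\epsilon\downarrow0$ (using $\nu_t(\{0\})=0$) and recalling $\nu_0=0$ gives $n(t)\le\int_0^t C(\tau)n(\tau)\,{\rm d}\tau$, so $n\equiv0$ and $\nu_t=0$. Thus $F_t$ is regular with density $f(\cdot,t)$.

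Steps (c) and (d): with $F_t=f(\cdot,t)\sqrt x\,{\rm d}x$, inserting $f$ into (\ref{Equation3}) for $\varphi\in\{\varphi_\epsilon\}_{\epsilon>0}$ and for $\varphi\equiv1$, $\varphi(x)=x$, and using the step (a) splitting, yields $f(x,t)=f_0(x)+\int_0^t Q(f)(x,\tau)\,{\rm d}\tau$ for a.e.\ $x$; the requirements in Definition \ref{definition 1-1}(i)--(ii) follow from conservation of $N,E$ (bounding $\sup_{[0,T]}\|(1+\cdot)f(\cdot,t)\|_{L^1(\sqrt x\,{\rm d}x)}$) and from step (a) (a.e.-finiteness of the collision integral), so $f(\cdot,t)$ is a mild solution with $f(\cdot,0)=f_0$. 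Continuity into $L^1$ follows since $\|f(\cdot,t)-f(\cdot,s)\|_{L^1(\mathbb R_+)}\le\int_s^t\!\int_{\mathbb R_+}|Q(f)(x,\tau)|\,{\rm d}x\,{\rm d}\tau$ and $\int_{\mathbb R_+}|Q(f)(x,\tau)|\,{\rm d}x=\int_{\mathbb R_+}x^{-1/2}|Q(f)(x,\tau)|\sqrt x\,{\rm d}x\le C(N,E,b_0,\eta)\big(1+\sup_{\tau\le T}M_{-1/2}(F_\tau)\big)<\infty$, so $f\in{\rm Lip}_{\rm loc}\big([0,\infty);L^1(\mathbb R_+)\big)\subset C\big([0,\infty);L^1(\mathbb R_+)\big)$; and if $F_t$ is conservative then $\int f(\cdot,t)\sqrt x\,{\rm d}x=N(F_0)$, $\int x f(\cdot,t)\sqrt x\,{\rm d}x=E(F_0)$, i.e.\ $f(\cdot,t)$ is conservative. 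The main obstacle is the cubic-gain estimate in step (a): bounding $\int{\cal K}[\varphi_\epsilon]\,{\rm d}^3F_t$ by a quantity \emph{linear} in $M_{-1/2}(F_t)$, uniformly in $\epsilon$, without calling on negative moments of order $<-\tfrac12$ or positive moments beyond the energy — precisely the point at which the balanced decay $\widehat\phi(r)\lesssim r^{\eta}$ with $\eta\ge\tfrac32$ is used; the same estimate, reused with $\nu_t$ in one slot, powers step (b).
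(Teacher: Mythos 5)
Your overall architecture (propagate $M_{-1/2}$, then propagate absolute continuity by a Gronwall argument, then identify the density as a mild solution) matches the paper's, and your step (a) is exactly Theorem \ref{theorem1.3}, which you may simply cite rather than re-derive. The genuine gap is in step (b). You write the Lebesgue decomposition $F_t=f(\cdot,t)\sqrt{x}\,{\rm d}x+\nu_t$ and then assert the evolution inequality $\int\varphi_\epsilon\,{\rm d}\nu_t\le\int\varphi_\epsilon\,{\rm d}\nu_0+\int_0^tC(\tau)n(\tau)\,{\rm d}\tau$ for the singular part alone. But the weak formulation (\ref{Equation3}) only governs $\int\varphi\,{\rm d}F_t$ for $\varphi\in C^{1,1}_b$; it does not split into separate equations for the regular and singular parts, and the heuristic ``the a.c.\ gain feeds only the a.c.\ part'' is precisely what must be proved --- a priori, regular mass at time $\tau$ could concentrate onto a Lebesgue-null set by time $t$ (this is the same mechanism as condensation, which the whole theorem is ruling out). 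To project the equation onto $\nu_t$ you would have to test against ${\bf 1}_{Z_t}$ for the time-dependent null carrier $Z_t$ of $\nu_t$, and indicators of arbitrary Borel sets are not admissible test functions in Definition \ref{definition1.1}; one must first upgrade to the measure-level strong formulation (\ref{bd-func}) of Proposition \ref{proposition 4-4}, which your proposal never invokes. The paper sidesteps this by estimating $V_t(\delta)=\sup_{{\rm mes}(U)<\delta}F_t(U)$ over \emph{open} sets $U$ (so monotone approximation by $C_b$ functions is legitimate), proving
$F_t(U)\le V_0(\delta)+8^{1+\eta}b_0^2\delta N^2t+8^{2+\eta}b_0^2N\int_0^tM_{-1/2}(F_\tau)V_\tau(\delta)\,{\rm d}\tau$
--- the $\delta$-term from the absolutely continuous quadratic gain, the last term from the cubic gain with ${\bf 1}_U(x)$ or ${\bf 1}_U(y+z-x)$ controlled by $V_\tau(\delta)$ --- then applying Gronwall at fixed $\delta$ and letting $\delta\to0^+$. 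Your $n(t)$-Gronwall is salvageable along those lines, but as written it is an assertion, not a derivation.

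A second, smaller gap: in step (c) you conclude the mild-solution identity ``for a.e.\ $x$'' after testing against $\varphi_\epsilon$, $1$ and $x$ only; identifying the density pointwise requires testing against all bounded Borel functions (again via Proposition \ref{proposition 4-4}), and Definition \ref{definition 1-1}(ii) demands a null set $Z$ independent of $t$, which the paper obtains by replacing $f$ with $\widetilde f(\cdot,t)=\big|f_0+\int_0^tQ(f)(\cdot,\tau)\,{\rm d}\tau\big|$ and applying Fubini. These points are fixable but not automatic.
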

For any given $F_0\in  {\cal B}_{1}^{+}({\mathbb R}_{\ge 0})$ we define a function
$\Psi_{F_0}(\vep)$ on $\vep\in[0,\infty)$ by
\be\Psi_{F_0}(\vep)=\vep+\sqrt{\vep}+\int_{\fr{1}{\sqrt{\vep}}}^{\infty}x {\rm d}F_0(x),\quad \vep>0;\quad
\Psi_{F_0}(0)=0.\lb{Psi}\ee
Here $\int_{\fr{1}{\sqrt{\vep}}}^{\infty}$ can be understood as either
$\int_{(\fr{1}{\sqrt{\vep}}, \infty)}$ or $\int_{[\fr{1}{\sqrt{\vep}}, \infty)}$. Now we can introduce stability theorem.
\begin{theorem}\lb{theorem1.11}
	Suppose $B({\bf {\bf v-v}_*},\omega)$ satisfy  Assumption \ref{assp} with $\eta\ge \fr{3}{2}$, moreover assume $\widehat{\phi}$ satisfy  $\widehat{\phi}(r)\ge a_0r^{-\beta}>0$ for all $r\ge R$ with $R\ge 0$ and $0\le \beta<\fr{1}{2}$. Let $F_0, G_0\in {\cal B}_1^{+}({\mathbb R}_{\ge 0})$ satisfy $M_{-1/2}(F_0)<\infty, M_{-1/2}(G_0)<\infty$, and
	$F_t, G_t$ be conservative  measure-valued isotropic solution to Eq.(\ref{Equation1}) on $[0, \infty)$  with their
	initial data $F_0, G_0$ respectively. Then
	\be \|F_t-G_t\|_1\le C\Psi_{F_0}(\|F_0-G_0\|_1) e^{e^{ct}}\qquad
	\forall\, t\in [0, \infty)\lb{H0}\ee
	where $\Psi_{F_0}(\cdot)$ is defined in (\ref{Psi}) and
	$C,c$ are finite positive constants depending only on $N(F_0), E(F_0),N(G_0),E(G_0)$, $a_0$,$b_0$,$\beta$,$\eta$,$q_1$,$R$,$M_{-1/2}(F_0)$, $M_{-1/2}(G_0)$.
	
	In particular if $F_0=G_0$, then
	$F_t=G_t$ for all $t\in [0,\infty).$
\end{theorem}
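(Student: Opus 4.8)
The plan is to run a Gronwall-type argument on the quantity $D_t := \|F_t - G_t\|_1 = \int_{\mathbb R_{\ge 0}} (1+x)\,{\rm d}|F_t - G_t|(x)$, but the central difficulty, which dictates the unusual form of the bound, is that the collision operator is cubic and involves the kernel $W$ whose singular behaviour near the origin is only controlled \emph{after} one knows that $F_t$ stays away from a Dirac mass at $0$ and has a suitable negative moment. So the first step is to invoke Theorem \ref{theorem1.10}: under Assumption \ref{assp} with $\eta\ge 3/2$ and $M_{-1/2}(F_0)<\infty$, both $F_t$ and $G_t$ are regular for all $t$, are mild solutions, and — this is where the extra hypothesis $\widehat\phi(r)\ge a_0 r^{-\beta}$ with $0\le\beta<1/2$ enters — one can propagate a quantitative lower bound on $M_{-1/2}(F_t)$ (and $M_{-1/2}(G_t)$) that is finite for each finite $t$, possibly growing doubly exponentially. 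I would extract from the proof of Theorem \ref{theorem1.10} an estimate of the form $M_{-1/2}(F_t) \le C e^{e^{ct}}$ (and similarly for $G_t$), since it is exactly this doubly exponential control that will later surface in \eqref{H0}.

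The second step is the differential inequality. Writing $H_t = F_t - G_t$, one tests the weak formulation \eqref{Equation3} (extended to mild solutions, where the cubic term is now a genuine integral against $W\,\sqrt y\sqrt z\,{\rm d}y\,{\rm d}z$) against $\varphi(x) = (1+x)\operatorname{sgn}(h_t(x))$, where $h_t$ is the density of $H_t$ — more precisely one works with smooth approximations of the sign function to stay inside $C^{1,1}_b$, as is standard. The quadratic part ${\cal J}[\varphi]$ applied to $F_t\otimes F_t - G_t\otimes G_t$ splits as $(F_t-G_t)\otimes F_t + G_t\otimes(F_t-G_t)$, and similarly the cubic part into three pieces each containing one factor of $H$; the gain-minus-loss structure together with the sign choice produces, after using $\|\Delta\varphi\|\le \text{const}\cdot(1+\text{something})$ via \eqref{diff1} and the mass/energy conservation, a bound
\[
\frac{{\rm d}}{{\rm d}t} D_t \;\le\; \big(A_1(t) + A_2(t)\big)\, D_t,
\]
where $A_1(t)$ is a quadratic-type coefficient controlled by $N,E$ and the uniform bounds on $W$ away from the origin, and $A_2(t)$ is the cubic coefficient which also involves $\|f(\cdot,t)\|$-type quantities; crucially $A_2(t)$ is where one must absorb the factor $M_{-1/2}(F_t)$ coming from integrating $W(x,y,z)\le \text{const}/\sqrt{\min\{x,y,z,x_*\}}\cdot \widehat\phi$-bounds near the coordinate axes, so $A_1(t)+A_2(t) \le c\,e^{ct}$ after Step 1. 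Here the lower bound $\widehat\phi(r)\ge a_0 r^{-\beta}$ is what keeps $W$ from degenerating at large arguments, which is needed to close the negative-moment propagation and hence to bound $A_2$.

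The third step is the Gronwall integration, which gives $D_t \le D_0 \exp\!\big(\int_0^t (A_1+A_2)\big) \le D_0\, e^{e^{ct}}$ — but this is \emph{not} quite \eqref{H0}, because $D_0 = \|F_0-G_0\|_1$ on the right is too crude near $\vep=0$: the initial regularity/negative-moment control we propagate is only as good as the behaviour of $F_0$ near the origin and its tail, which is exactly what $\Psi_{F_0}$ measures. So the honest bound is $D_t \le C\,\Psi_{F_0}(\|F_0-G_0\|_1)\,e^{e^{ct}}$, obtained by re-running the argument with $F_t$ and a truncated/regularized version of $G_0$ at scale $\vep = \|F_0-G_0\|_1$: the $\vep + \sqrt\vep$ terms in \eqref{Psi} account for the error in replacing $G_0$ by a density with the same mass and a controlled $M_{-1/2}$, and the tail term $\int_{1/\sqrt\vep}^\infty x\,{\rm d}F_0$ accounts for the energy lost under that truncation. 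Finally, setting $F_0 = G_0$ forces $\Psi_{F_0}(0)=0$ by definition, hence $D_t \equiv 0$, giving uniqueness. I expect the main obstacle to be Step 2, specifically making the cubic coefficient $A_2(t)$ rigorous: one must show that after the sign-test, the three cubic pieces each reduce to $D_t$ times a factor controlled by $M_{-1/2}$ and conserved quantities, which requires careful use of the symmetrized form of ${\cal K}[\varphi]$, the bounds \eqref{W1}–\eqref{W2} on $W$ under \eqref{1.6}, and the identity \eqref{1.difference}; the doubly exponential, rather than merely exponential, rate is forced precisely because $M_{-1/2}(F_t)$ itself is only known to be doubly exponentially bounded.
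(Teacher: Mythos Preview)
Your overall Gronwall strategy is on the right track, but the proposal has a genuine gap in Step~2 and misidentifies the role of several ingredients, which together prevent the argument from closing.

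\textbf{The quadratic term does not fit into your single differential inequality.} You propose $\tfrac{{\rm d}}{{\rm d}t}D_t \le (A_1(t)+A_2(t))D_t$ with locally bounded coefficients. For the cubic term ${\cal Q}_3$ this works: by (\ref{Q3-1}) one indeed has $\|{\cal Q}_3(F_t)-{\cal Q}_3(G_t)\|_1 \le b(F_t,G_t)\|H_t\|_1$ with $b$ controlled by $M_{-1/2}$, and Theorem~\ref{theorem1.3} gives $M_{-1/2}(F_t)\le (at+M_{-1/2}(F_0))e^{bt}$ --- \emph{single}, not double, exponential (the upper bound in Assumption~\ref{assp} suffices here). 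But the quadratic term ${\cal Q}_2$ is the obstruction: by (\ref{Q2-1}) with $k=1$, $\|{\cal Q}_2(F_t)-{\cal Q}_2(G_t)\|_1$ is controlled by $\|F_t+G_t\|_{3/2}\|H_t\|_{3/2}$, and neither factor reduces to $\|H_t\|_1$. The paper handles this by a more refined estimate (analogous to Lemma~3.5 in \cite{Lu2014}) giving a bound of the form $C_0(1+1/t)\|H_t\|_0$, where the $(1+1/t)$ comes from the moment-production bound $\|F_t\|_{3/2}\le C_0(1+1/t)$ of Proposition~\ref{Mproduct}. \emph{This} is where the lower bound $\widehat\phi(r)\ge a_0 r^{-\beta}$ is actually used --- for production of positive moments $M_{3/2}$, not for the negative-moment propagation as you suggest. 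Since $(1+1/t)$ is not integrable at $t=0$, you cannot start your Gronwall at $s=0$, and a single inequality on $D_t$ alone does not close.

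\textbf{The origin of $\Psi_{F_0}$.} Because of the $1/t$ singularity, the paper runs a coupled system in $\|H_t\|_0$ and $\|H_t\|_1$, and treats the short-time interval $[0,\vep]$ (with $\vep\sim\|H_0\|_1$) by a separate crude bound: one replaces the weight $(1+x)$ by $(1+x\wedge R_1)$ with $R_1=1/\sqrt{\vep}$ and uses conservation of energy to control the remainder $\int_{x>R_1}x\,{\rm d}F_0$. This is exactly what produces the three terms of $\Psi_{F_0}(\vep)$. Your explanation --- regularizing $G_0$ at scale $\vep$ --- is not what happens, and indeed note that if your direct Gronwall \emph{did} yield $D_t\le D_0\,e^{e^{ct}}$, that would already be \emph{stronger} than (\ref{H0}) since $\Psi_{F_0}(\vep)\ge\vep$; so your ``not quite (\ref{H0})'' remark signals a misunderstanding. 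The double exponential in (\ref{H0}) arises because the Gronwall coefficient $C_1(t)$ is itself exponential in $t$ (through the $e^{bt}$ in Theorem~\ref{theorem1.3}), and a second Gronwall on $[1,\infty)$ then gives $e^{e^{ct}}$.
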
	
\begin{theorem}\lb{theorem1.12}
	Let the collision kernel $B({\bf {\bf v-v}_*},\omega)$ be given by
	(\ref{kernel}),(\ref{kernel2}), (\ref{Phi}) and suppose  $\widehat{\phi}(r)$ satisfy
	\be a(r)\le \widehat{\phi}(r) \quad\forall\, r\ge  0.\lb{laa}\ee
	where $a(\cdot)$ is a non-decreasing continuous function on $[0, \infty)$  satisfying $a(r)>0 $ for $r>0$ and $b_1 $ is a constant.	Given any  $F_0\in {\mathcal B}_1^{+}({\mathbb R}_{\ge 0})$  with $N=N(F_0)>0$ and $E=E(F_0)>0$ and let $F_t$ be a conservative  measure-valued isotropic solution $F_t$ of
	Eq.(\ref{Equation1}) on $[0,\infty)$ with the initial datum $F_0$ (the existence of $F_t$
	has been insured by Remark \ref{remark}). Let $F_{\rm be}$ be
	the unique Bose-Einstein distribution with the same mass and energy as $F_0$. Then
	$$\lim_{t\to\infty}S(F_t)=S(F_{\rm be}),\quad \lim_{t\to\infty}\|F_t-F_{\rm be}\|_{1}^{\circ}=0.$$
	Conserquently it holds the weak convergence:
	$$\lim_{t\to\infty}\int_{{\mR}_{\ge 0}}\vp(x){\rm d}F_t(x)=\int_{{\mR}_{\ge 0}}\vp(x){\rm d}F_{\rm be}(x)
	\quad \forall\, \vp\in C_b({\mR}_{\ge 0}).$$
\end{theorem}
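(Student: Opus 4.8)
The plan is to adapt the long-time-behaviour argument of \cite{Lu2005} (see also the Appendix of \cite{Cai-Lu}), which was designed for the hard-sphere kernel, to the kernels satisfying (\ref{laa}). The single new structural input needed is that the lower bound $a(r)\le\widehat{\phi}(r)$ makes the kernel $W(x,y,z)$ of (\ref{W1}) non-degenerate: in the $s$-integral there $\Phi(\sqrt 2\,s,\sqrt 2\,Y_*)=(\widehat{\phi}(\sqrt 2\,s)+\widehat{\phi}(\sqrt 2\,Y_*))^2\ge a(\sqrt 2\,s)^2>0$, so $W(x,y,z)>0$ whenever $x_*xyz>0$; moreover $\Phi\ge a(1)^2>0$ once $\sqrt 2\,s\ge1$, so on the high-energy part of phase space $W$ is bounded below by a constant multiple of the hard-sphere kernel $W_H$ of Remark \ref{remark}. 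These are precisely the two features of the kernel that the hard-sphere proof uses.

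First I would record the standard a priori facts for a conservative measure-valued isotropic solution $F_t$ with $N(F_t)=N$, $E(F_t)=E$: (a) by the $H$-theorem (cf.\ \cite{Lu2005}), $t\mapsto S(F_t)$ is non-decreasing and bounded above by $S(F_{\rm be})$, $F_{\rm be}$ being the unique maximiser of $S$ among measures of mass $N$ and energy $\le E$; hence $S(F_t)$ increases to some $S_\infty\le S(F_{\rm be})$, and the non-negative entropy production obeys $\int_0^\infty D(F_t)\,{\rm d}t<\infty$; (b) a Povzner-type estimate for $\varphi(x)=x^{1+\vep}$, using $W\gtrsim W_H$ in the high-energy region, gives $\sup_{t\ge1}M_{1+\vep}(F_t)<\infty$ for some small $\vep>0$, whence the energy densities $\{x\,{\rm d}F_t\}_{t\ge1}$ are uniformly integrable and no energy escapes to infinity.

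The heart of the proof is the compactness and limit-identification step. Fix any $t_n\to\infty$; since $\int_0^\infty D<\infty$, $\int_{t_n}^{t_n+1}D(F_t)\,{\rm d}t\to0$, so the time-shifted solutions $s\mapsto F^{(n)}_s:=F_{t_n+s}$, $s\in[0,1]$, have vanishing total entropy production, uniform mass/energy/$M_{1+\vep}$ bounds, and (by (\ref{Equation3})) equicontinuity tested against any fixed $\varphi\in C^{1,1}_b({\mathbb R}_{\ge0})$; an Arzela--Ascoli argument (in time, with the weak topology on measures) extracts a subsequence converging to a conservative measure-valued isotropic solution $G_s$ on $[0,1]$. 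The crucial lemma, proved as in \cite{Lu2005}, is lower semicontinuity of the entropy production along this convergence, which yields $\int_0^1 D(G_s)\,{\rm d}s=0$, hence $D(G_s)=0$ for a.e.\ $s$; combined with $E(G_s)=E$ (no energy loss, by (b)) and with the fact that an isotropic measure of mass $N$, energy $E$ and zero entropy production is the equilibrium, this forces $G_s=F_{\rm be}$ for a.e.\ $s$, hence — by weak continuity of $s\mapsto G_s$ — for all $s$, in particular $G_0=F_{\rm be}$. Thus $F_{t_n}\to F_{\rm be}$ weakly; as $t_n\to\infty$ was arbitrary, $F_t\to F_{\rm be}$ weakly as $t\to\infty$, which is the last assertion. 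Feeding $S(F_{t_n+s})\to S_\infty$ into the same limit together with the weak upper semicontinuity of $S$ on tight families of bounded energy forces $S_\infty=S(F_{\rm be})$.

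It remains to upgrade weak convergence to $\|F_t-F_{\rm be}\|_1^{\circ}\to0$. Decompose ${\rm d}F_t=f_t(x)\sqrt x\,{\rm d}x+{\rm d}\nu_t(x)$ and ${\rm d}F_{\rm be}=f_{\rm be}(x)\sqrt x\,{\rm d}x+\big(1-(\overline{T}/\overline{T}_c)^{3/5}\big)N\,\dt(x)\,{\rm d}x$ (the atom absent when $\overline{T}>\overline{T}_c$). Since $S(f_t)\to S(f_{\rm be})$ by the previous step, the absolutely continuous parts converge weakly (the atom at $0$ contributing nothing to the entropy, by (\ref{ent2})), and $u\mapsto(1+u)\log(1+u)-u\log u$ is strictly concave with sublinear growth, the standard strict-concavity argument yields $f_t\to f_{\rm be}$ in $L^1({\mathbb R}_{\ge0},\sqrt x\,{\rm d}x)$; the uniform integrability from (b) then upgrades this to $L^1({\mathbb R}_{\ge0},(1+x)\sqrt x\,{\rm d}x)$, so $\int x f_t(x)\sqrt x\,{\rm d}x\to E$. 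Hence $\int_{(0,\infty)}x\,{\rm d}\nu_t(x)=E-\int x f_t(x)\sqrt x\,{\rm d}x\to0$, and since $\dt$ does not contribute to $\|\cdot\|_1^{\circ}$,
$$\|F_t-F_{\rm be}\|_1^{\circ}\le\int_{{\mathbb R}_{\ge0}}x\,|f_t(x)-f_{\rm be}(x)|\sqrt x\,{\rm d}x+\int_{(0,\infty)}x\,{\rm d}\nu_t(x)\longrightarrow0 .$$
The main obstacle is the limit-identification lemma of the third step — passing to the weak limit in the cubic collision term of (\ref{Equation3}) and proving lower semicontinuity of the entropy production; this is where the non-degeneracy $W>0$ from (\ref{laa}) is indispensable and where the cubic nonlinearity, meaningful only in the isotropic/measure setting, makes the estimates delicate. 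The remaining steps are bookkeeping resting on the conservation laws, the moment bound (b), and the convexity of the entropy density.
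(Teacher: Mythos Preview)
Your proposal has a genuine structural gap: you work directly with the measure-valued solution $F_t$ and invoke an $H$-theorem ($t\mapsto S(F_t)$ non-decreasing, $\int_0^\infty D(F_t)\,{\rm d}t<\infty$) together with lower semicontinuity of the entropy production $D$ along weak limits. But in this framework the entropy production functional $D(\cdot)$ is only defined and controlled for \emph{classical} (function-valued) solutions---see (\ref{3.2}), (\ref{eelity}), which are stated for the approximate solutions $f^K$, not for measures. For a general $F_t\in\mathcal{B}_1^+(\mathbb{R}_{\ge0})$ with a singular part, the integrand $\Pi(f)\Gamma(g'g_*',gg_*)$ has no meaning, so neither your step (a) nor the limit-identification lemma (``$D(G_s)=0$ forces $G_s=F_{\rm be}$'') is available as written. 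Likewise, your step (b) assumes moment production for an arbitrary measure-valued solution, whereas Proposition \ref{Mproduct} only furnishes it for solutions built as weak limits of the $f^K$.

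The paper's proof avoids exactly these difficulties by staying at the level of the approximate solutions: one checks that for $K\ge b_1^2$ the cut-off kernel $B_K$ dominates a fixed non-degenerate kernel
\[
B_{\min}(\mathbf{v}-\mathbf{v}_*,\omega)=\tfrac{1}{(4\pi)^2}\cos^3\theta\,\sin^3\theta\,(|\mathbf{v}-\mathbf{v}_*|\wedge 1)^3\,a^2\big(\tfrac{1}{\sqrt2}|\mathbf{v}-\mathbf{v}_*|\big),
\]
so the hypotheses of Theorem~1 in \cite{Lu2005} apply uniformly in $K$ to the classical solutions $f^K$ (for which the entropy equality (\ref{eelity}), moment production (\ref{mprod}), and the entropy lower bound (\ref{entropy3.5}) are all legitimately available). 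The convergence $S(f^K(t))\to S(F_{\rm be})$ and $\|f^K(t)-F_{\rm be}\|_1^{\circ}\to0$ is thus obtained at the approximate level, and only then does one pass to the measure-valued limit via Theorem \ref{weak stability} and the entropy stability lemma of \cite{Cai-Lu}. In short, the approximation is not a technicality you can bypass---it is what makes the entropy-production machinery applicable.
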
	

\quad The rest of the paper is organized as follows. In Section 2 we prove  Theorem \ref{theorem1.9} and Theorem \ref{theorem1.3}: non-condensation in finite time and propagation of $M_{-p}(F_t)$ for $0<p\le \fr{1}{2}$. In Section 3, we prove moment production, positive lower bound of entropy and weak convergence. In Section 4, we use propagation of $M_{-1/2}(F_t)$ to get regularity,stability(uniqueness) of $F_t$ if $M_{-1/2}(F_0)<\infty$. We also prove the global existence of   mild solution and strong solution of Eq.(\ref{Equation1}) if $M_{-1/2}(F_0)<\infty$ and get $L^{\infty}$ estimate about the mild solutions.

\begin{center}\section { Non-condensation in finite time and propation of negative order of moment }\end{center}
																 \quad In this section we prove non-condensation in finite time and propagation of  $M_{-p}(F_t)<\infty$ for $0<p\le \fr{1}{2}$. To prove them, we  need the following lemma about $W(x,y,z)$.
\begin{lemma}\lb{lemma0} Suppose $B({\bf {\bf v-v}_*},\omega)$ satisfy  Assumption \ref{assp} , then the following estimates about $W(x,y,z)$ hold
	\bes &&W(x,y,z)\le 4b_0^2\fr{\min\{1,\max\{8x,8y,8z\}^\eta\}}{\sqrt{x}\sqrt{y}\sqrt{z}}\min\{\sqrt{x},\sqrt{y},\sqrt{z},\sqrt{x_*}\}, \quad \forall x,y,z>0  \lb{W01} \\
&&W(0,y,z)\le 4b_0^2 \fr{\min\{1,\max\{8y,8z\}^\eta\}}{\sqrt{yz}}, \quad y,z>0 \dnumber\lb{W02}\\
&&W(x,0,z)\le \fr{4b_0^2}{\sqrt{xz}}\min\{1,(8z)^\eta\}, \quad z>x>0 \dnumber\lb{W03}\\
&&W(x,y,0)\le\fr{4b_0^2}{\sqrt{xy}}\min\{1,(8y)^\eta\}, \quad y>x>0\dnumber\lb{W04}\\
&&W(x,y,z)\le (1+q_1\fr{y}{z})W(y,x,z), \quad \forall 0\le x\le y\le \fr{z}{2}\dnumber\lb{W05} \ees
where $b_0$,$\eta$ and $q_1$ are defined in Assumption \ref{assp}.
\end{lemma}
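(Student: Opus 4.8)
For (\ref{W01})--(\ref{W04}) I would simply feed the elementary consequence $\wh{\phi}(r)\le b_0\fr{r^\eta}{1+r^\eta}\le b_0\min\{1,r^\eta\}=b_0\min\{1,(r^2)^{\eta/2}\}$ of Assumption \ref{assp} into the defining formulas (\ref{W1}), (\ref{W2}), using $\min\{1,t^{\eta/2}\}^2=\min\{1,t^\eta\}$. In (\ref{W1}) every $s$ in the range satisfies $2s^2\le(\sqrt{2x}+\sqrt{2y})^2\le4(x+y)\le\max\{8x,8y,8z\}$ and, since $Y_*\le\sqrt{z-u}+\sqrt{x-u}\le\sqrt z+\sqrt x$, also $2Y_*^2\le4(x+z)\le\max\{8x,8y,8z\}$; hence $\Phi(\sqrt2 s,\sqrt2 Y_*)=(\wh{\phi}(\sqrt2 s)+\wh{\phi}(\sqrt2 Y_*))^2\le4b_0^2\min\{1,\max\{8x,8y,8z\}^\eta\}$ uniformly on the domain of integration. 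Pulling this constant out of the double integral leaves $\fr{1}{4\pi\sqrt{xyz}}\cdot2\pi\cdot(s_+-s_-)$, and by (\ref{1.difference}) the length $s_+-s_-$ equals $2\min\{\sqrt x,\sqrt y,\sqrt z,\sqrt{x_*}\}$ when positive (and the integral vanishes otherwise, so (\ref{W01}) is trivial), giving (\ref{W01}) $=4b_0^2\min\{1,\max\{8x,8y,8z\}^\eta\}W_H(x,y,z)$. Estimates (\ref{W02})--(\ref{W04}) follow identically from the explicit values of $W$ in (\ref{W2}) --- e.g. for $W(x,0,z)$, $z>x>0$, both $2x,2(z-x)\le8z$, so $\Phi(\sqrt{2x},\sqrt{2(z-x)})\le4b_0^2\min\{1,(8z)^\eta\}$.

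The plan for (\ref{W05}) is a pointwise-in-$s$ comparison of the representations (\ref{W1}) of $W(x,y,z)$ and $W(y,x,z)$. When $0\le x\le y\le z/2$, the largest of $\{x,y,z,x_*\}$ is $x_*=y+z-x$ in the first case and $x+z-y$ in the second, so by (\ref{1.difference}) both have prefactor $\fr1{4\pi\sqrt{xyz}}$ and the same $s$-interval $[\sqrt y-\sqrt x,\sqrt y+\sqrt x]$. Writing $d=y-x$ and, for $s$ in this interval, $u=(s^2-d)^2/(4s^2)\in[0,x]$, Remark \ref{remark0} gives $Y_*(x,y,z,s,\theta)=|\sqrt{z-u}+e^{{\rm i}\theta}\sqrt{x-u}|$, whereas for $W(y,x,z)$ the inner variable is $\tilde u=(s^2+d)^2/(4s^2)=u+d$ with $\sqrt{y-\tilde u}=\sqrt{x-u}$, giving $Y_*(y,x,z,s,\theta)=|\sqrt{z-u-d}+e^{{\rm i}\theta}\sqrt{x-u}|$. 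So, setting $a=\sqrt{z-u}$, $\tilde a=\sqrt{z-u-d}$, $b=\sqrt{x-u}$ (for which $a^2-\tilde a^2=y-x$, $\tilde a^2-b^2=z-y$, and, since $b^2\le x\le z/2$ and $2\tilde a^2\ge2(z-y)\ge z$, $a\ge\tilde a\ge b\ge0$), it remains to prove, for each such $s$,
\[\int_0^{2\pi}\!\big(\wh{\phi}(\sqrt2 s)+\wh{\phi}(\sqrt2|a+e^{{\rm i}\theta}b|)\big)^2{\rm d}\theta\ \le\ \Big(1+q_1\fr yz\Big)\int_0^{2\pi}\!\big(\wh{\phi}(\sqrt2 s)+\wh{\phi}(\sqrt2|\tilde a+e^{{\rm i}\theta}b|)\big)^2{\rm d}\theta.\]

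With $Y_*=|a+e^{{\rm i}\theta}b|$ and $\tilde Y_*=|\tilde a+e^{{\rm i}\theta}b|$, shifting the right-half-plane point $\tilde a+e^{{\rm i}\theta}b$ to the right by $a-\tilde a$ gives $Y_*\ge\tilde Y_*$. The idea is to use the scaling hypothesis $\wh{\phi}(\sqrt2 Y_*)\le k(Y_*/\tilde Y_*)\wh{\phi}(\sqrt2\tilde Y_*)$ (so that, since $k\ge1$, $\Phi(\sqrt2 s,\sqrt2 Y_*)\le k(Y_*/\tilde Y_*)^2\Phi(\sqrt2 s,\sqrt2\tilde Y_*)$) together with $k(\lambda)^2=1+\int_1^\lambda(k^2)'\le1+q_1(\lambda-1)$. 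For $\cos\theta\ge0$ this immediately gives the sharp constant: the desired inequality $z(Y_*^2-\tilde Y_*^2)\le2(y-x)\tilde Y_*^2$ reduces, after dividing by $(a-\tilde a)(a+\tilde a)$, to $\fr z2-\tilde a^2-b^2\le b\cos\theta\,(2\tilde a-\fr z{a+\tilde a})$, whose left side is $\le0$ (because $2\tilde a^2\ge z$) while the right side has the sign of $\cos\theta$; hence there $Y_*/\tilde Y_*-1\le\fr{Y_*^2-\tilde Y_*^2}{2\tilde Y_*^2}\le\fr{y-x}{z}\le\fr yz$, and separately $Y_*^2\le2\tilde Y_*^2$ (as $a^2\le2\tilde a^2$ and $a<2\tilde a$), so $\Phi(\sqrt2 s,\sqrt2 Y_*)\le(1+q_1\fr yz)\Phi(\sqrt2 s,\sqrt2\tilde Y_*)$ on $\{\cos\theta\ge0\}$.

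The hard part will be the range $\cos\theta<0$ (i.e. $\theta$ near $\pi$), where $\tilde Y_*$ can drop to a fixed positive multiple of $\sqrt z$ and $Y_*/\tilde Y_*$ can exceed $\sqrt2$, so that neither the pointwise scaling nor the crude bound $Y_*^2-\tilde Y_*^2\le y-x$ alone reproduces the factor $1+q_1\fr yz$. I would extend the scaling by $\wh{\phi}(\sqrt2 Y_*)\le k(\sqrt2)\,k\!\big(\fr{Y_*}{\sqrt2\,\tilde Y_*}\big)\wh{\phi}(\sqrt2\tilde Y_*)$ for $Y_*/\tilde Y_*\in(\sqrt2,2]$ (using only ratios in $(1,\sqrt2]$) and then show, after integrating in $\theta$, that the bounded overcounting incurred on the small arc where $Y_*/\tilde Y_*>\sqrt2$ is compensated --- exploiting that $Y_*^2-\tilde Y_*^2=(y-x)+2b\cos\theta(a-\tilde a)$ is automatically small (of order below $y-x$) exactly in the near-degenerate configurations ($a\approx\tilde a$, i.e. $y\approx x$) where $\tilde Y_*^2/z$ is small, and using $\tilde a^2-b^2=z-y\ge z/2$, $b^2\le z/2$. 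Making this last cancellation quantitative is the crux of the lemma.
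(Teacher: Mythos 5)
Your treatment of (\ref{W01})--(\ref{W04}) is correct and essentially identical to the paper's: bound $\Phi(\sqrt2\,s,\sqrt2\,Y_*)$ uniformly by $4b_0^2\min\{1,\max\{8x,8y,8z\}^\eta\}$ using $\wh{\phi}(r)\le b_0\min\{1,r^\eta\}$ together with the elementary bounds on $s$ and $Y_*$, then integrate out the $s$-interval of length $2\min\{\sqrt x,\sqrt y,\sqrt z,\sqrt{x_*}\}$ via (\ref{1.difference}).

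For (\ref{W05}) your proposal is incomplete, and you say so yourself: the region $\cos\theta<0$ is only sketched, and the ``cancellation after integrating in $\theta$'' is exactly the step that would have to be made quantitative. As written, (\ref{W05}) is not proved; that is a genuine gap. However, your diagnosis of where the difficulty sits is accurate, and the paper's own proof does not actually overcome it: the paper asserts the pointwise chain $1\le Y_*/Y_*^{\sharp}\le\sqrt{U/V}\le\sqrt{z/(z-y)}$ for all $\theta$, but this fails when $\cos\theta$ is near $-1$. Concretely, take $x=1/10$, $y=1$, $z=2$, $s=\sqrt{9/10}$ (so $u=(x-y+s^2)^2/(4s^2)=0$, hence $U=2$, $O=1/10$, $V=11/10$) and $\theta=\pi$; then
\[
\frac{Y_*}{Y_*^{\sharp}}=\frac{\sqrt{2}-\sqrt{1/10}}{\sqrt{11/10}-\sqrt{1/10}}\approx 1.499\;>\;\sqrt{2}=\sqrt{\frac{z}{z-y}}\;>\;\sqrt{\frac{U}{V}}\approx 1.348 ,
\]
so the paper's inequality (\ref{Y*}) is false there, and since the ratio exceeds $\sqrt2$ the hypothesis on $k$ in Assumption \ref{assp} cannot even be invoked in one step. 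Your half-circle argument for $\cos\theta\ge 0$ is correct (indeed $Y_*^2-\tilde Y_*^2\le\frac{2(y-x)}{z}\tilde Y_*^2$ and $Y_*\le\sqrt2\,\tilde Y_*$ hold there, which yields the sharp constant $1+q_1 y/z$ pointwise on that half), but the half-circle $\cos\theta<0$ remains open in your proposal exactly as it is, tacitly, in the paper. To close the lemma one must either prove a correct uniform-in-$\theta$ bound on $Y_*/Y_*^{\sharp}$ (possibly under a strengthened hypothesis on $k$), or carry out quantitatively the integrated-in-$\theta$ compensation you outline; neither is done.
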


\par
\noindent\begin{proof}First we need to estimate  $\Phi(\sqrt{2}\,s, \sqrt{2}\,Y_*)$.
By  (\ref{Y}) and (\ref{KK1}), for the case of $ |\sqrt{x}-\sqrt{y}|\vee |\sqrt{x_*}-\sqrt{z}| \le s\le (\sqrt{x}+\sqrt{y})\wedge (\sqrt{x_*}+\sqrt{z}),s>0$,we have
 $$s\le 2\max\{\sqrt{x},\sqrt{y},\sqrt{z}\}, $$
$$Y_*\le \sqrt{z-\fr{(x-y+s^2)^2}{4s^2}
}
+\sqrt{x-\fr{(x-y+s^2)^2}{4s^2}
}\
\le 2\max\{\sqrt{x},\sqrt{y},\sqrt{z}\}.
$$

\noindent So we obtain
\beas &&\Phi(\sqrt{2}\,s, \sqrt{2}\,Y_*)
=\Big(\widehat{\phi}(\sqrt{2}s+\widehat{\phi}(\sqrt{2}Y_*)
\Big)^2 \le  b_0^2\Big(\fr{(\sqrt{2}s)^{\eta}}{1+(\sqrt{2}s)^{\eta}}+\fr{(\sqrt{2}Y_*)^{\eta}}{1+(\sqrt{2}Y_*)^{\eta}})
\Big)^2\\
&&\le  b_0^2\Big(\fr{(2\sqrt{2}\max\{\sqrt{x},\sqrt{y},\sqrt{z}\})^{\eta}}{1+(2\sqrt{2}\max\{\sqrt{x},\sqrt{y},\sqrt{z}\})^{\eta}}+\fr{(2\sqrt{2}\max\{\sqrt{x},\sqrt{y},\sqrt{z}\})^{\eta}}{1+(2\sqrt{2}\max\{\sqrt{x},\sqrt{y},\sqrt{z}\})^{\eta}})
\Big)^2\\ &&\le 4b_0^2 \min\{1,\max\{8x,8y,8z\}^\eta\}.  \eeas
Together with (\ref{W2}), (\ref{1.difference}), (\ref{1.6}), this yields
\beas W(x,y,z)
&=&\fr{1}{4\pi\sqrt{xyz}}
\int_{|\sqrt{x}-\sqrt{y}|\vee |\sqrt{x_*}-\sqrt{z}|}
^{(\sqrt{x}+\sqrt{y})\wedge(\sqrt{x_*}+\sqrt{z})}{\rm d}s
\int_{0}^{2\pi}\Phi(\sqrt{2}s, \sqrt{2} Y_*){\rm d}\theta
\\
&\le& \fr{b_0^2}{\pi \sqrt{xyz}}\int_{|\sqrt{x}-\sqrt{y}|\vee |\sqrt{x_*}-\sqrt{z}|}
^{(\sqrt{x}+\sqrt{y})\wedge(\sqrt{x_*}+\sqrt{z})}{\rm d}s
\int_{0}^{2\pi}\min\{1,\max\{8x,8y,8z\}^\eta\}{\rm d}\theta\\
&=&4b_0^2\fr{\min\{1,\max\{8x,8y,8z\}^\eta\}}{\sqrt{xyz}}\min\{\sqrt{x},\sqrt{y},\sqrt{z},\sqrt{x_*}\} .\qquad \forall x,y,z>0 \eeas

  \noindent Thus we complete the proof of (\ref{W01}). The proofs of (\ref{W02}),(\ref{W03}),(\ref{W04}) are analogous.
\par
  In order to  prove (\ref{W05}),first we prove an useful inequality:
  \bes 1\le \fr{Y_*}{Y_*^{\sharp}}\le \fr{\sqrt{z}}{\sqrt{z-y}}\quad \forall 0\le x\le y< z   \lb{Y*}\ees
  where $Y_*^{\sharp}=Y_*(y,x,z,s,\theta)$. To prove this inequality, recalling that
  \be Y_*=Y_*(x,y,z,s,\theta)=\left\{\begin{array}
{ll}\displaystyle
\bigg|\sqrt{\Big(z-\fr{(x-y+s^2)^2}{4s^2}\Big)_{+}
}
+e^{{\rm i}\theta}\sqrt{\Big(x-\fr{(x-y+s^2)^2}{4s^2}
\Big)_{+}}\,\bigg|\quad {\rm if}\quad s>0\\  \displaystyle
\\
0\qquad\qquad  {\rm if}\quad s=0,

\nonumber \end{array}\right.\ee
we have $\fr{Y_*}{Y_*^{\sharp}}= \fr{\sqrt{z}}{\sqrt{z-y}}$ for  $0=x\le y<z$. For the case $x\neq0$ ,denote  $u=\fr{(x-y+s^2)^2}{4s^2}$,$U=z-\fr{(x-y+s^2)^2}{4s^2}$,$V=z-\fr{(y-x+s^2)^2}{4s^2}=U-y+x $ ,                     $O=x-\fr{(x-y+s^2)^2}{4s^2}=y-\fr{(y-x+s^2)^2}{4s^2}$.
By (\ref{KK0}) we know $u\le x$ if $s\in[\sqrt{y}-\sqrt{x}, \sqrt{x}+\sqrt{y}]$,$0< x \le y \le z$, thus we obtain
$$\fr{Y_*}{Y_*^{\sharp}}=\fr{|\sqrt{U}+e^{i\theta}\sqrt{O}|}{|\sqrt{V}+e^{i\theta}\sqrt{O}|}=\fr{\sqrt{U+O+\sqrt{UO}(e^{i\theta}+e^{-i\theta})}}{\sqrt{V+O+\sqrt{VO}(e^{i\theta}+e^{-i\theta})} }\le \sqrt{\fr{U}{V}} =  \fr{\sqrt{z-u}}{\sqrt{z+(x-y)-u}} \le \fr{\sqrt{z-x}}{\sqrt{z-y}} \le \fr{\sqrt{z}}{\sqrt{z-y}}$$
and
$$\fr{Y_*}{Y_*^{\sharp}}=\fr{\sqrt{U+O+\sqrt{UO}(e^{i\theta}+e^{-i\theta})}}{\sqrt{V+O+\sqrt{VO}(e^{i\theta}+e^{-i\theta})} }\ge 1 $$
Now we are ready to prove inequality  (\ref{W05});using (\ref{Y*}), (\ref{kernel2}) , (\ref{W1}), (\ref{W2}), it suffices to prove
  $$\widehat{\phi}(\sqrt{2}\,Y_*)\le \sqrt{1+q_1\fr{y}{z}}\widehat{\phi}(\sqrt{2}\,Y_*^{\sharp}) \quad  \forall 0\le x\le y\le \fr{z}{2} $$
 If $\widehat{\phi}(\sqrt{2}Y_*^{\sharp})=0$, by  Assumption \ref{assp}, this inequality is obvious. If $\widehat{\phi}(\sqrt{2}Y_*^{\sharp})\neq 0$
   \beas&&(\fr{\widehat{\phi}(\sqrt{2}Y_*)}{\widehat{\phi}(\sqrt{2}Y_*^{\sharp})})^2\le (k(\fr{Y_*}{Y_*^{\sharp}}))^{2}\le 1+(\fr{Y_*}{Y_*^{\sharp}}-1)\max_{x\in [1,\fr{Y_*}{Y_*^{\sharp}}]}2k(x)k'(x) =1+(\fr{Y_*}{Y_*^{\sharp}}-1)\max_{x\in [1,\fr{Y_*}{Y_*^{\sharp}}]}2k(x)k'(x) \\
   &&\le 1+(\sqrt{\fr{z}{z-y}}-1)\max_{x\in [1,\sqrt{\fr{z}{z-y}}]}2k(x)k'(x)\le 1+(\sqrt{\fr{z}{z-y}}-1)q_1\le 1+q_1 \fr{y}{2(z-y)}\le 1+q_1\fr{y}{z}  \eeas
   for all $0\le x\le y< \fr{z}{2}$, so we complete the proof.
\end{proof}
\vskip2mm
	\begin{remark}\lb{remark1}
	{\rm	Combining with \cite{Cai-Lu}, we know for $\widehat{\phi}(r)=b_0\fr{r^{\eta}}{1+r^{\eta}}$ the following esimates hold $$\fr{b_0^2}{8}\fr{z^\eta}{\sqrt{y}\sqrt{z}}\le W(x,y,z)\le 4b_0^2\fr{(8z)^\eta}{\sqrt{y}\sqrt{z}} \quad \forall 0< x\le y\le z\le 1. $$
		In this case, $W$ is still unbounded.}
		\end{remark}
{\bf Proof of Theorem \ref{theorem1.9}.} Denote $\varphi_{\vep}(x)=(1-\fr{x}{\vep})_+^2$. By the definition of weak solution,
			\bes \int_{{\mathbb R}_{\ge 0}}\varphi_{\vep}(x){\rm d}F_t(x)&=&\int_{{\mathbb R}_{\ge 0}}\varphi_{\vep}(x){\rm d}F_0(x)\nonumber \\&+&\int_0^t {\rm d}\tau\int_{{\mathbb R}_{\ge 0}^2}{\cal J}[\vp_{\vep}](y,z){\rm d}F_{\tau}(y){\rm d}F_{\tau}(z)\nonumber \\&+&\int_0^t d\tau \int_{{\mathbb R}_{\ge 0}^3}{\cal K}[\varphi_{\vep}]{\rm d}^3F_{\tau}.		\lb{I1}\ees
By the fact that  $W(x,y,z)\le 4b_0^2W_H(x,y,z)$, we have
		 \beas&&{\cal J}[\vp_{\vep}](y,z)\le \frac{1}{2}
		\int_{0}^{y+z}W(x,y,z)(\vp_{\vep}(x)+\vp_{\vep}(y+z-x))
		\sqrt{x}{\rm d}x\\
	&&	\le 2b_0^2\int_{0}^{y+z}W_H(x,y,z)(\vp_{\vep}(x)+\vp_{\vep}(y+z-x))
		\sqrt{x}{\rm d}x=4b_0^2\int_{0}^{y+z}W_H(x,y,z)\vp_{\vep}(x)
		\sqrt{x}{\rm d}x.\eeas
		Combining with the fact that $W_H(x,y,z)\sqrt{x}\le \sqrt{\fr{2}{y+z}}$ for all $0<x<y+z$ and $\sup_{r>0}\sqrt{\fr{1}{r}}\int_0^{r} \vp_1(x){\rm d}x\le 1$, this leads to
		\bes\int_{R^2 \ge 0}{\cal J}[\vp_{\vep}](y,z){\rm d}^2F_t&\le& 	4\sqrt{2}b_0^2 \sqrt{\vep} \int_{y,z\ge 0,y+z>0}\sqrt{\fr{\vep}{y+z}}{\rm d}F_t(y){\rm d}F_t(z)\int_0^{\fr{y+z}{\vep}} \vp_1(x){\rm d}x \nonumber \\
		&\le& 4\sqrt{2}b_0^2N^2\sqrt{\vep} \lb{J^+}.\ees
		The term $ \int_{{\mathbb R}_{\ge 0}^3}{\cal K}[\vp_{\vep}]{\rm d}^3F_{\tau}$ can be decomposed into the following parts (see \cite{Cai-Lu}):
		\bes \int_{{\mathbb R}_{\ge 0}^3}{\cal K}[\vp_{\vep}]{\rm d}^3F_{\tau}\nonumber
		&=&\left(2\int_{0\le x<y<z}+2\int_{0\le y<x<z}+
		\int_{0\le x<y=z}+\int_{0\le y, z<x}
		\right)W(x,y,z)\Dt\vp_{\vep}(x,y,z){\rm d}^3F_{\tau}\nonumber
		\\
		&=&\int_{0\le x<y\le z}\chi_{y,z}W(x,y,z)\Dt_{\rm sym}\vp_{\vep}(x,y,z){\rm d}^3F_{\tau}\nonumber\\
&+&
		2\int_{0\le x<y<z}\big(W(y,x,z)-W(x,y,z)\big)\Dt\vp_{\vep}(y,x,z){\rm d}^3F_{\tau}\nonumber
\\&+&\int_{0<y, z<x} W(x,y,z)
		\Dt\vp_{\vep}(x,y,z){\rm d}^3F_{\tau}\nonumber
\\
		&=&\int_{0< x<y\le z}\chi_{y,z}W(x,y,z)\Dt_{\rm sym}\vp_{\vep}(x,y,z){\rm d}^3F_{\tau}\nonumber\\
&+&
		2\int_{0< x<y<z}\big(W(y,x,z)-W(x,y,z)\big)\Dt\vp_{\vep}(y,x,z){\rm d}^3F_{\tau}\nonumber
\\&+&\int_{0<y, z<x} W(x,y,z)
		\Dt\vp_{\vep}(x,y,z){\rm d}^3F_{\tau}\nonumber \\
&+&F_{\tau}(\{0\})\int_{0<y\le z}\chi_{y,z}W(x,y,z)\Dt_{\rm sym}\vp_{\vep}(0,y,z){\rm d}^2F_{\tau}\nonumber\\
&+&2F_{\tau}(\{0\})\int_{0<y<z}\big(W(y,0,z)-W(0,y,z)\big)\Dt\vp_{\vep}(y,,z){\rm d}^2F_{\tau}\nonumber\\
&:=& I_1(\tau)+I_2(\tau)+I_3(\tau)+I_4(\tau)+I_5(\tau),
 \ees
		where
		\bes&& \Dt_{\rm sym}\vp(x,y,z)=
		 \vp(z+y-x)+\vp(z+x-y)-2\vp(z)\nonumber\\
		&&=(y-x)^2\int_{0}^1\!\!\!\int_{0}^1\vp''
		(z+(s-t)(y-x))
		{\rm d}s{\rm d}t,\quad 0\le x, y\le z.\lb{4.7} \ees
\bes\chi_{y,z} =\left\{\begin{array}{ll} 2
 \,\,\,\,\qquad\quad {\rm if} \quad y< z,\\
1\,\,\,\,\qquad\quad {\rm if} \quad y=z.
\end{array}\right.\nonumber\ees
		Now we are going to prove that
\bes&&\limsup_{\vep \to 0^+}\int_0^t \big(I_1(\tau)+I_2(\tau)+I_3(\tau)\big) d\tau =0 \lb{K0}\\
&&\limsup_{\vep \to 0^+}\int_0^t \big(I_4(\tau)+I_5(\tau)\big) d\tau \le \int_0^t F_{\tau}(\{0\})8^{2+\eta}b_0^2(1+q_1)N^2 {\rm d}\tau \dnumber \lb{II}
\ees
It is easy to deduce that
\beas&&\lim_{\vep \to 0^+}W(x,y,z)\Dt_{\rm sym}\vp_{\vep}(x,y,z)=0 \quad for\quad all \quad 0<x<y\le z, \\
&&\lim_{\vep \to 0^+}(W(y,x,z)-W(x,y,z)\big)\Dt\vp_{\vep}(y,x,z)=0 \quad {\rm for}\,\,\,{\rm all}\,\,\, 0<x<y\le z, \\
&&\lim_{\vep \to 0^+}W(x,y,z)\Dt\vp_{\vep}(x,y,z)=0 \quad {\rm for}\,\,\,{\rm all}\,\,\, 0<y,z< x<y+z.
\eeas
This triggers us to use dominated convergence theorem to prove (\ref{K0}). If we can prove
\bes &&W(x,y,z)\Dt_{\rm sym}\vp_{\vep}(x,y,z) \le  8^{1+\eta}b_0^2 \quad for\quad all \quad 0<x<y\le z,\lb{E1}\\
&&(W(y,x,z)-W(x,y,z)\big)\Dt\vp_{\vep}(y,x,z) \le 8^{1+\eta}q_1 b_0^2 \quad {\rm for}\,\,\,{\rm all}\,\,\, 0<x<y\le z,\dnumber \lb{E2} \\
&&W(x,y,z)\Dt\vp_{\vep}(x,y,z)=0 \quad {\rm for}\,\,\,{\rm all}\,\,\, 0<y,z< x<y+z, \dnumber \lb{E3} \ees
then we can use dominated convergence theorem to prove (\ref{K0}). To prove (\ref{E1}), by the convexity of $\vp_1$ we have $0\le -\vp_1'(x)\le \fr{\vp_1(0)-\vp_1(x)}{x}\le \fr{1}{x}$, thus
		$$\quad \Dt_{\rm sym}\vp_{\vep}(x,y,z)\le \vp_1(\fr{z+x-y}{\vep})-\vp_1(\fr{z}{\vep}) \le -\vp_1'(\fr{z+x-y}{\vep})\fr{y-x}{\vep}\le \fr{y-x}{z+x-y} \quad \forall 0\le x<y\le z $$
		So we have for $0\le x<y\le \fr{z}{2} $, $$W(x,y,z)\Dt_{\rm sym}\vp_{\vep}(x,y,z)\le 4b_0^2\fr{\min\{1,\{8z\}^\eta\}}{\sqrt{y}\sqrt{z}}\fr{y-x}{z-y+x}\le 8^{1+\eta}b_0^2, $$
		and for  $0\le x<y\le z,y>\fr{z}{2} $,
		$$W(x,y,z)\Dt_{\rm sym}\vp_{\vep}(x,y,z)\le 4b_0^2\fr{\min\{1,\{8z\}^\eta\}}{\sqrt{y}\sqrt{z}}\le 8^{1+\eta}b_0^2.   $$
For the term $\big(W(y,x,z)-W(x,y,z)\big)\Dt\vp_{\vep}(y,x,z)$, if  $W(y,x,z)\ge W(x,y,z)$, then  $\big(W(y,x,z)-W(x,y,z)\big)\Dt\vp_{\vep}(y,x,z)\le 0$ for $0\le x<y<z$.	
If $W(y,x,z)\le W(x,y,z)$, and $0\le x<y<\fr{z}{2}$, then
			$$\big(W(y,x,z)-W(x,y,z)\big)\Dt\vp_{\vep}(y,x,z)\le q_1\fr{y}{z}W(y,x,z)\vp_{\vep}(x)\le q_1\fr{y}{z}4b_0^2 \fr{\min\{1,(8z)^\eta\}}{\sqrt{yz}} \le 8^{1+\eta}q_1 b_0^2.$$
			For  $W(y,x,z)\le W(x,y,z)$ and $0\le x<y<z,y\ge \fr{z}{2}$, we have
			$$\big(W(y,x,z)-W(x,y,z)\big)\Dt\vp_{\vep}(y,x,z)\le 4b_0^2 \fr{\min\{1,(8z)^\eta\}}{\sqrt{yz}}\vp_{\vep}(x) \le 8^{1+\eta}b_0^2.$$
So we have proved (\ref{E1}),(\ref{E2}),(\ref{E3}) thus (\ref{K0}) holds. The proof of (\ref{II}) is analogous, in fact we can use the same method to prove that
\beas &&\lim_{\vep \to 0^+}W(0,y,z)\Dt_{\rm sym}\vp_{\vep}(0,y,z)=0 \quad for\quad all \quad 0<y<z,\\
&&\lim_{\vep \to 0^+}(W(y,0,z)-W(0,y,z)\big)\Dt\vp_{\vep}(y,x,z)=0 \quad {\rm for}\,\,\,{\rm all}\,\,\, 0<y<z,\\
 &&W(0,y,z)\Dt_{\rm sym}\vp_{\vep}(0,y,z) \le  8^{1+\eta}b_0^2 \quad for\quad all \quad 0<y\le z,\\
&&\big((W(y,0,z)-W(0,y,z)\big)\Dt\vp_{\vep}(y,0,z) \le 8^{1+\eta}q_1 b_0^2 \quad {\rm for}\,\,\,{\rm all}\,\,\, 0<y\le z. \eeas
The only difference is that we can not prove
\beas &&\lim_{\vep \to 0^+}W(0,y,z)\Dt_{\rm sym}\vp_{\vep}(0,y,z)=0 \quad for\quad all \quad 0<y=z,\\
&&\lim_{\vep \to 0^+}(W(y,0,z)-W(0,y,z)\big)\Dt\vp_{\vep}(y,x,z)=0 \quad {\rm for}\,\,\,{\rm all}\,\,\, 0<y=z,\eeas
		Combining (\ref{J^+}),(\ref{K0}),(\ref{II}), and  taking  sup limits in (\ref{I1}) as $\vep \to 0^+$ we have
		$$F_t(\{0\})\le  F_0(\{0\})+\int_0^t F_{\tau}(\{0\})8^{2+\eta}b_0^2(1+q_1)N^2 {\rm d}\tau.$$
		So by Gronwall inequality we conclude
		$$F_t(\{0\}) \le e^{8^{2+\eta}b_0^2(1+q_1)N^2t} F_0(\{0\}),\quad t\ge 0. $$  $\hfill\Box$

\begin{remark} {\rm The above inequality, i.e. $F_t(\{0\}) \le e^{Ct} F_0(\{0\})$,
 is very special and has an obvious physics meaning: under the assumption about balanced potential,
 if there is no seed of condensation at the origin,
 then there is always no condensation at the origin. However this property does not hold for a set
 away from the origin, i.e. the inequality like  $F_t(\{x\})\le e^{Ct} F_0(\{x\})$ may not hold for $x>0$.
 In the following we only show this phenomenon for $x$ belonging to a set of positive intergers. The proof for other $x\in (0,\infty)$ is essentially the same.}
\par
\end{remark}

\noindent{\bf Example (propagation of singularity away from the origin).}

Let $F_0\in {\cal B}_{1}^{+}({\mR}_{\ge 0})$ satisfy $F_0(\{1\})>0, F_0(\{2\})>0$. Let $F_t\in {\cal B}_{1}^{+}({\mR}_{\ge 0})$ with the initial datum
$F_0$ be a conservative measure-valued solution of Eq.(\ref{Equation1}) where the collision kernel
$B$ together with $F_0$ satisfies one of the following two conditions:
\par

	{\rm(a)} $B({\bf {\bf v-v}_*},\omega)$ satisfies  Assumption \ref{assp} with $\inf\limits_{r\ge R}\widehat{\phi}(r) >0$ for all $R>0$, and $M_{-1/2}(F_0)<\infty$;
\par

	{\rm(b)} $B({\bf{\bf v-v}_*},\omega)=\frac{1}{(4\pi)^2}|({\bf v-v}_*)\cdot\omega|$ (the hard sphere model) and $M_{-1/2}(F_0)\le \fr{1}{320}[N(F_0)E(F_0)]^{1/4}$.  \par
\noindent Then $F_t(\{n\})>0 $ for all $n\in {\mN}$ and all $t> 0$.
\par
\noindent\begin{proof} In the proof we will use some notations and results in Section 3 and Section 4.
First of all we note that each of the conditions (a), (b) implies that the solution $F_t$ is unque (
see Theorem \ref{theorem1.11} and Theorem 3.2 of \cite{Lu2014}), and this allows us to use approximate solutions.
 \par
Part {\rm (a)}:  Denote $ a(r)=\inf\limits_{l\ge r}\widehat{\phi}(l))$, $u=F_0(\{1\}),v=F_0(\{2\}), H_0=F_0-u\delta(\cdot-1)-v\delta(\cdot-2)$, where $\delta(\cdot)$ is the Dirac measure concentrated at $x=0$. For any $2\le k\in {\mN}$, let
	$$ f_{0,k}(x) = \fr{ku}{2}1_{[1-\fr{1}{k},1+\fr{1}{k}]}(x)+\fr{kv}{2\sqrt{2}}1_{[2-\fr{1}{k},2+\fr{1}{k}]}(x)+\tilde{f}_{0,k}(x),      \quad \quad \quad  x\in (0,\infty) $$
	where $\tilde{f}_{0,k}(\cdot)\sqrt{\cdot}$ converges to $H_0$ weakly, i.e
	$$\lim_{k\to \infty}\int_{{\mathbb R}_{\ge 0}}\vp \tilde{f}_{0,k}(x)\sqrt{x} {\rm d}x=\int_{{\mathbb R}_{\ge 0}}\vp  {\rm d}H_0(x) \quad \forall \vp\in C_b({\mathbb R}_{\ge 0}). $$
	We can choose $\tilde{f}_{0,k}$
	appropriately such that
\bes&& \fr{1}{2}N(F_0)\le N(f_{0,k})\le 2 N(F_0),\quad  \fr{1}{2}E(F_0)\le E(f_{0,k})\le 2 E(F_0),\nonumber\\
&&  \fr{1}{2}M_{-1/2}(F_0) \le M_{-1/2}(f_{0,k})\le 2M_{-1/2}(F_0)  \qquad \forall k\ge 2. \lb{f}\ees
It is obvious that  $f_{0,k}(\cdot)\sqrt{\cdot}$ converges weakly to $F_0$. Using Lemma 2.2 and Lemma 2.3 in \cite{Lu2014}, $W(x,y,z)\le 4b_0^2 W_H(x,y,z)$, Theorem \ref{theorem1.3}, and Theorem \ref{theorem1.10},
we know there  exist  unique conservative mild solutions $f_k$ on ${\mR}_{\ge 0}\times [0,\infty)$ with initial data $f_{0,k} $ and satisfies for any $T\in [0,\infty)$,
\beas&& \sup_{\tau \in [0,T], x \le 5, k\ge 2 }L(f_k)(x,\tau)\\
&&\le \sup_{\tau \in [0,T], x \le 5,k \ge 2 }4b_0^2(\sqrt{x} N(f_k)+M_{1/2}(f_k)(\tau)+2[M_{-1/2}(f_k)]^2(\tau)):=C_T<\infty.\eeas
	So by Proposition \ref{proposition 3-3}  we get
	\beas f_k(x,t)&=&f_{0,k}(x) e^{-\int_{0}^{t}L(f_k)(x,\tau){\rm d}\tau}
	+\int_{0}^{t}Q^{+}(f_k)(x,\tau)e^{-\int_{\tau}^{t}L(f_k)(x,s){\rm d}s}\\
&&\ge f_{0,k}(x) e^{-\int_{0}^{t}L(f_k)(x,\tau){\rm d}\tau}\ge f_{0,k}(x) e^{-\int_{0}^{t}C_T{\rm d}\tau}=f_{0,k}(x) e^{-C_Tt}  \eeas
	for all $x\in [0,4]$, $k\ge 2$ and $t \in [0,T]$.
		By (\ref{W2}), we calculate
	\beas &&W(x,y,z)\sqrt{y}\sqrt{z}=\fr{1}{4\pi\sqrt{x}}
	\int_{|\sqrt{x}-\sqrt{y}|\vee |\sqrt{x_*}-\sqrt{z}|}
	^{(\sqrt{x}+\sqrt{y})\wedge(\sqrt{x_*}+\sqrt{z})}{\rm d}s
	\int_{0}^{2\pi}\Phi(\sqrt{2}s, \sqrt{2} Y_*){\rm d}\theta\\
	&&\ge \fr{1}{4\pi\sqrt{x}}
	\int_{|\sqrt{x}-\sqrt{y}|\vee |\sqrt{x_*}-\sqrt{z}|}
	^{(\sqrt{x}+\sqrt{y})\wedge(\sqrt{x_*}+\sqrt{z})}{\rm d}s
	\int_{0}^{2\pi}a^2(\fr{1}{8}){\rm d}\theta=\fr{\sqrt{x_*}}{2\sqrt{x}}a^2(\fr{1}{8})\ge \fr{1}{2\sqrt{13}}a^2(\fr{1}{8}):=c_1 \eeas
for all $x\in [\fr{11}{4},\fr{13}{4}]$,$y\in [\fr{7}{4},\fr{9}{4}]$,$z\in [\fr{7}{4},\fr{9}{4}]$.
So for all $x\in [3-\fr{1}{2k},3+\fr{1}{2k}]$
\beas&&Q^{+}(f_k)(x,\tau)\ge \int_{{\mR}_{\ge 0}^2}W(x,y,z)f_k(y,\tau)f_k(z,\tau)f_k(x_*,\tau)\sqrt{y}\sqrt{z}{\rm d}y{\rm
		d}z\\
&& \ge \int_{{y,z\in [2-\fr{1}{4k}]},2+\fr{1}{4k}]}c_1f_k(y,\tau)f_k(z,\tau)f_k(x_*,\tau){\rm d}y{\rm
		d}z\\
&&\ge \int_{{y,z\in [2-\fr{1}{4k}]},2+\fr{1}{4k}]}e^{-3C_T
		\tau}\fr{c_1k^3uv^2}{16}{\rm d}y{\rm
		d}z=c_1e^{-3C_T
		\tau}\fr{c_1kuv^2}{64} \quad \tau \in [0,T].\eeas
	Thus  for all $x\in [3-\fr{1}{2k},3+\fr{1}{2k}]$, $t \in [0,T]$, we have
	\beas f_k(x,t)&=&f_{0,k}(x) e^{-\int_{0}^{t}L(f_k)(x,\tau){\rm d}\tau}
	+\int_{0}^{t}Q^{+}(f_k)(x,\tau)e^{-\int_{\tau}^{t}L(f_k)(x,s){\rm d}s}\\
&\ge& \int_{0}^{t}e^{-3C_T
		\tau}\fr{c_1kuv^2}{64}e^{-C_T
		(t-\tau)} {\rm d}s =\fr{e^{-C_Tt}-e^{-3C_Tt}}{C_T}\fr{c_1kuv^2}{128}.\eeas
This leads to
	\beas F_{k,t}([3-\fr{1}{2k},3+\fr{1}{2k}])&=&\int_{3-\fr{1}{2k}}^{3+\fr{1}{2k}} f_k(x,t)\sqrt{x}dx\ge\int_{3-\fr{1}{2k}}^{3+\fr{1}{2k}}\fr{e^{-C_Tt}-e^{-3C_Tt}}{C_T}\fr{c_1kuv}{128} \sqrt{2}dx \\
	&=&\fr{e^{-C_Tt}-e^{-3C_Tt}}{C_T}\fr{c_1uv^2}{64\sqrt{2}},  \quad \quad   \tau \in [0,T].\eeas
 From Theorem \ref{weak stability} and Theorem \ref{theorem1.11} (uniqueness), we conclude that the unique conservative measure-valued solution $F_t$ satisfies
	$$F_{t}([3-\fr{1}{2k},3+\fr{1}{2k}])\ge\fr{e^{-C_Tt}-e^{-3C_Tt}}{C_T}\fr{c_1uv^2}{64\sqrt{2}} \quad \quad t \in [0,T]. $$
	Since $k$ can be arbitrarily large, it follows that  $$F_t(\{3\})\ge\fr{e^{-C_Tt}-e^{-3C_Tt}}{C_T}\fr{c_1uv^2}{64\sqrt{2}}>0  \quad \quad t \in [0,T].$$
	So far we know $F_t(\{1\})>0,F_t(\{2\})>0,F_t(\{3\})>0$ for all $t>0$. In particular for any $0<T<\infty$,we know $F_{\fr{T}{2}}(\{3\})>0 $,$F_{\fr{T}{2}}(\{2\})>0 $, $M_{-1/2}(F_{\fr{T}{2}})<\infty$. So we can use $F_{\fr{T}{2}}$ as initial datum and use the same method to get $F_{t}(\{4\})>0 $ for all $\fr{T}{2}<t\le T$. And we can use $F_{\fr{3T}{4}} $ as initial datum to get $F_{t}(\{5\})>0 $ for all $\fr{3T}{4}<t\le T$. By induction we can get $F_{t}(\{n\})>0 $ for all $T-\fr{T}{2^{n-3}}<t\le T$. In particular we can choose $t=T$, then $F_{T}(\{n\})>0.$Since $T>0$ is arbitrary, we get the conclusion.\par
Part{\rm (b)}: We use notations and choose   $f_{0,k}$ just the same as in (I). By (\ref{f}) we have $\|f_{0,k}\|_{L^1}\le \fr{1}{80}[N(f_{0,k})E(f_{0,k})]^{\fr{1}{4}}$. Using Lemma 2.2,Lemma 2.3 and Theorem 4.1 in \cite{Lu2014}, we know there  exist  unique conservative mild solutions $f_k$ on ${\mR}_{\ge 0}\times [0,\infty)$ with initial data $f_{0,k} $ and satisfies
	\beas&& \sup_{\tau \in [0,\infty], x \le 5, k\ge 2 }L(f_k)(x,\tau)\\
	&&\le \sup_{\tau \in [0,\infty], x \le 5,k \ge 2 }\sqrt{x} N(f_k)+M_{1/2}(f_k)(\tau)+2[M_{-1/2}(f_k)]^2(\tau):=C<\infty.\eeas
	In a way similar to the proof of (I) and using $W_H(x,y,z)\sqrt{yz}=\fr{\min\{\sqrt{x},\sqrt{x_*},\sqrt{y},\sqrt{z}\}}{\sqrt{x}}$, we get
$$ F_{k,t}([3-\fr{1}{2k},3+\fr{1}{2k}])\ge \fr{e^{-Ct}-e^{-3Ct}}{C}\fr{uv^2}{192\sqrt{2}}.$$
	By Theorem \ref{weak stability} and Theorem 3.2 in \cite{Lu2014} (uniqueness), we conclude that the conservative measure-valued solution $F_t$ satisfies
	$$F_{t}([3-\fr{1}{2k},3+\fr{1}{2k}])\ge\fr{e^{-Ct}-e^{-3Ct}}{C}\fr{uv^2}{192\sqrt{2}}. $$
	Since $k$ can be arbitrarily large, we obatain  $$F_t(\{3\})\ge\fr{e^{-Ct}-e^{-3Ct}}{C}\fr{uv^2}{192\sqrt{2}}>0.$$
	Now we have proved $F_t(\{1\})>0,F_t(\{2\})>0,F_t(\{3\})>0$ for all $t>0$. The rest of the parts of prove is just the same as in Part (a).
\end{proof}
\begin{remark}{\rm  This Example also tells us that for many initial data $F_0$, there is no hope for $F_t$
(with $t>0$) to be decomposed as
${\rm d}F_t(x)=f(x,t)\sqrt{x}{\rm d}x+n(t)\dt(x){\rm d}x$
where $0\le f(\cdot, t)\in L^1({\mR}_{\ge 0},\sqrt{x}{\rm d}x)$, $n(t)\ge 0$ and
$\dt(\cdot)$ is the Dirac delta function concentrated at $x=0$.}
\end{remark}

\par
	\begin{theorem}\lb{theorem1.3} Let $F_0\in {\mathcal B}_1^{+}({\mathbb R}_{\ge 0})$  with mass $N=N(F_0)$ and energy $E=E(F_0)$. Given any $0<p\le \fr{1}{2}$, suppose $B(v-v_*,w)$ satisfy the Assumption \ref{assp} with $\eta\ge 1+p$  and the initial $F_0$ satisfy $M_{-p}(F_0)<\infty$. Let
	$F_t$ be a conservative  measure-valued isotropic solution $F_t$ of
	Eq.(\ref{Equation1}) on $[0,\infty)$ with the initial datum $F_0$.
	Then $M_{-p}(F_t)<\infty$ for all $t>0$. More precisely we have
	\begin{align} M_{-p}(F_t)\le  (at+M_{-p}(F_0)) e^{bt}\quad \forall t\ge 0 \dnumber \lb{nmoment}\end{align}
	where
	$a=8^2b_0^2 N^{\fr{3}{2}+p}E^{\fr{1}{2}-p}+8^{2+\eta}b_0^2N^3, \,b=8^{3+\eta}b_0^2N^2(1+q_1)$.
	
\end{theorem}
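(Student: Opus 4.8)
The plan is to derive a closed differential inequality for a regularised negative moment and then apply Gronwall. Since $M_{-p}(F_0)<\infty$ with $p>0$, (\ref{Mp}) forces $F_0(\{0\})=0$, so Theorem \ref{theorem1.9} gives $F_t(\{0\})=0$ for every $t\ge0$, and throughout one may treat $F_t$ as a measure on $(0,\infty)$. For $\delta\in(0,1)$ I would test the weak equation against $\vp_\delta(x)=(x+\delta)^{-p}$; this lies in $C^{1,1}_b({\mR}_{\ge0})$ (it is bounded by $\delta^{-p}$, with $\vp_\delta'=-p(\,\cdot\,+\delta)^{-p-1}$ bounded and Lipschitz) and is convex, so $g_\delta(t):=\int\vp_\delta\,{\rm d}F_t<\infty$ is $C^1$ and $g_\delta'(t)=\int_{{\mR}_{\ge0}^2}{\cal J}[\vp_\delta]\,{\rm d}^2F_t+\int_{{\mR}_{\ge0}^3}{\cal K}[\vp_\delta]\,{\rm d}^3F_t$. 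The target is $g_\delta'(t)\le a+b\,g_\delta(t)$ with $a,b$ as in the statement and independent of $\delta$; Gronwall then gives $g_\delta(t)\le(g_\delta(0)+at)e^{bt}\le(M_{-p}(F_0)+at)e^{bt}$, and monotone convergence as $\delta\downarrow0$ ($g_\delta(t)\uparrow M_{-p}(F_t)$) yields (\ref{nmoment}).

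For the quadratic term, $\vp_\delta\ge0$ gives $\Dt\vp_\delta(x,y,z)\le\vp_\delta(x)+\vp_\delta(x_*)$, so the substitution $x\mapsto y+z-x$ on $[0,y+z]$ yields ${\cal J}[\vp_\delta](y,z)\le\int_0^{y+z}W(x,y,z)\vp_\delta(x)\sqrt x\,{\rm d}x$. Since $W\le 4b_0^2W_H$ and $W_H(x,y,z)\sqrt x\le\sqrt{2/(y+z)}$, together with $\int_0^{y+z}(x+\delta)^{-p}\,{\rm d}x\le(y+z)^{1-p}/(1-p)$, one gets ${\cal J}[\vp_\delta](y,z)\le\frac{4\sqrt2\,b_0^2}{1-p}(y+z)^{1/2-p}$; using $(y+z)^{1/2-p}\le y^{1/2-p}+z^{1/2-p}$ and the interpolation $\int x^{1/2-p}\,{\rm d}F_t\le N^{1/2+p}E^{1/2-p}$ (Hölder against the conserved $N$ and $E$), integration against ${\rm d}^2F_t$ gives $\int{\cal J}[\vp_\delta]\,{\rm d}^2F_t\le 8^2b_0^2N^{3/2+p}E^{1/2-p}$, the first term of $a$.

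The cubic term is the heart of the matter. I would decompose $\int{\cal K}[\vp_\delta]\,{\rm d}^3F_t$ exactly as in the proof of Theorem \ref{theorem1.9}, but with no $F_t(\{0\})$-terms, as $I_1+I_2+I_3$, where $I_1=\int_{0<x<y\le z}\chi_{y,z}W(x,y,z)\Dt_{\rm sym}\vp_\delta(x,y,z)\,{\rm d}^3F_t$, $I_2=2\int_{0<x<y<z}\big(W(y,x,z)-W(x,y,z)\big)\Dt\vp_\delta(y,x,z)\,{\rm d}^3F_t$, and $I_3=\int_{0<y,z<x}W(x,y,z)\Dt\vp_\delta(x,y,z)\,{\rm d}^3F_t$. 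Each region is controlled via Lemma \ref{lemma0}; the structural point is that in every region exactly one of the slots $x,x_*,y,z$ is smallest and $W$ carries the square root of that slot, while $\eta\ge1+p$ makes the product $\min\{1,\max\{8x,8y,8z\}^\eta\}\,(x+\delta)^{-p}$ harmless near the origin (in particular $\min\{1,(8r)^\eta\}\,r^{-p}\le 8^p$ for all $r>0$). For $I_3$, the slot $x_*=y+z-x<\min\{y,z\}$ is smallest and in fact $W(x,y,z)\le 8^{1+\eta}b_0^2$ on that region; since $r\mapsto\sqrt r\,(r+\delta)^{-p}$ is increasing for $p\le\tfrac12$, one has $\sqrt{x_*}\,\vp_\delta(x_*)\le\sqrt{y\wedge z}\;\vp_\delta(y\wedge z)$, which converts the dangerous factor $\vp_\delta(x_*)$ in $\Dt\vp_\delta\le\vp_\delta(x_*)$ into $\vp_\delta$ of an integration variable, yielding a contribution $\le\mathrm{const}\cdot b_0^2N^2g_\delta(t)$. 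For $I_2$, the first two arguments of $W$ and of $\Dt\vp_\delta$ differ, $\Dt\vp_\delta(y,x,z)\le 0$ on $\{x<y<z\}$, and the difference estimate (\ref{W05}), $W(x,y,z)\le(1+q_1 y/z)W(y,x,z)$ for $x\le y\le z/2$ (the case $y>z/2$ handled directly as in Theorem \ref{theorem1.9}), produces the factor $1+q_1$ and again a bound $\le\mathrm{const}\cdot q_1 b_0^2N^2g_\delta(t)$. For $I_1$, where the smallest slot $x$ is itself an integration variable, one combines $\Dt_{\rm sym}\vp_\delta(x,y,z)\le\vp_\delta(z+x-y)-\vp_\delta(z)$ with the convexity bound $\Dt_{\rm sym}\vp_\delta\le(y-x)^2\vp_\delta''(z+x-y)$ and the balanced factor $\min\{1,(8z)^\eta\}$ with $\eta\ge1+p$, splitting according to whether $z$ or $z-y$ is large, to obtain a bound $\le 8^{2+\eta}b_0^2N^3+\mathrm{const}\cdot b_0^2N^2g_\delta(t)$. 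Collecting everything gives $I_1+I_2+I_3\le 8^{2+\eta}b_0^2N^3+8^{3+\eta}b_0^2(1+q_1)N^2g_\delta(t)$.

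Adding the two bounds gives $g_\delta'(t)\le\big(8^2b_0^2N^{3/2+p}E^{1/2-p}+8^{2+\eta}b_0^2N^3\big)+8^{3+\eta}b_0^2(1+q_1)N^2g_\delta(t)=a+b\,g_\delta(t)$, and Gronwall together with $\delta\downarrow0$ finishes the proof. The main obstacle I expect is the book-keeping in the cubic term near the origin — keeping the square-root weights of $W$ aligned with the correct slot in each subregion, exploiting the monotonicity of $\sqrt r\,(r+\delta)^{-p}$ and the balanced vanishing $\eta\ge1+p$, and making sure that in the symmetric-difference region $I_1$ every leftover power of $x,y,z$ is absorbed either into the conserved $N,E$ or into $g_\delta(t)$, rather than into an uncontrolled negative moment such as $M_{-1/2}(F_t)$; tracking the numerical constants down to exactly $8^{2+\eta}$ and $8^{3+\eta}$ is then a matter of following the $\sqrt2$'s and $8$'s through Lemma \ref{lemma0}.
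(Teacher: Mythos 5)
Your proposal follows essentially the same route as the paper's proof: the regularisation $\vp_\delta(x)=(x+\delta)^{-p}$, the bound ${\cal J}[\vp_\delta]\le 4b_0^2\int W_H\vp_\delta\sqrt{x}\,{\rm d}x$ with H\"older interpolation for the quadratic term, the identical three-region decomposition of the cubic term handled via Lemma \ref{lemma0}, the comparison (\ref{W05}), the monotonicity of $r\mapsto\sqrt{r}\,\vp_\delta(r)$, and the convexity/splitting at $y=z/2$, followed by Gronwall and monotone convergence as $\delta\downarrow 0$. The argument and the resulting constants match the paper's, so the proposal is correct and not a genuinely different approach.
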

\begin{proof}Denote $\varphi_{\vep,p}(x)=\fr{1}{(\vep+x)^p}$ and $ M_{-p}^{\vep}(F_t)=\int_{{\mR}_{\ge 0}} \varphi_{\vep,p}(x)dF_t(x)$. To prove (\ref{nmoment}), first we prove the following differential inequality of $ M_{-p}^{\vep}(F_t)$:
	$$\fr{d}{dt}M_{-p}^{\vep}(F_t)\le a+bM_{-p}^{\vep}(F_t).$$
	Recalling that $W_H(x,y,z)=\fr{1}{\sqrt{xyz}}\min\{\sqrt{x},\sqrt{y},\sqrt{z},\sqrt{x_*}\} $ and  $W(x,y,z)\le 4b_0^2 W_H(x,y,z) $, for $ 0<y \le z$
	we have
	\beas &&{\cal J}[\vp_{\vep,p}](y,z)\le \frac{1}{2}
	\int_{0}^{y+z}W(x,y,z)(\vp_{\vep,p}(x)+\vp_{\vep,p}(y+z-x))
	\sqrt{x}{\rm d}x \nonumber \\
	&& \le 2b_0^2\int_{0}^{y+z}W_H(x,y,z)(\vp_{\vep,p}(x)+\vp_{\vep,p}(y+z-x))
	\sqrt{x}{\rm d}x=4b_0^2\int_{0}^{y+z}W_H(x,y,z)\vp_{\vep,p}(x)
	\sqrt{x}{\rm d}x  \lb{JJ1}\eeas
and
	\beas &&\int_{0}^{y+z}W_H(x,y,z)\vp_{\vep,p}(x)
	\sqrt{x}{\rm d}x \le\int_{0}^{y} \fr{\sqrt{x}}{\sqrt{yz}} \fr{1}{x^p} {\rm d}x+\int_{y}^{z} \fr{1}{\sqrt{z}} \fr{1}{x^p} {\rm d}x+\int_{z}^{y+z} \fr{\sqrt{x_*}}{\sqrt{yz}} \fr{1}{x^p} {\rm d}x    \\
	&&\le \fr{1}{\fr{3}{2}-p} \fr{y^{1-p}}{\sqrt{z}}+\fr{1}{1-p} \fr{z^{1-p}-y^{1-p}}{\sqrt{z}}+\fr{1}{1-p} \fr{(y+z)^{1-p}-z^{1-p}}{\sqrt{z}} \le 2 \fr{(y+z)^{1-p}}{\sqrt{z}} \le 4 z^{\fr{1}{2}-p} .\eeas
	 By symmetry of $y,z$ we obtain
	$$\int_{0}^{y+z}W_H(x,y,z)\vp_{\vep,p}(x)\sqrt{x}{\rm d}x \le 4(y^{\fr{1}{2}-p}+z^{\fr{1}{2}-p}) \quad {\rm for}\,\,\,{\rm all}\,\,\, 0\le y,z.$$
	Thus we can use H{\"o}lder inequality to get
	\bes \int_{\mR^2 \ge 0}{\cal J}[\vp_{\vep,p}](y,z){\rm d}^2F_t\le\int_{\mR^2 \ge 0}16(y^{\fr{1}{2}-p}+z^{\fr{1}{2}-p})b_0^2 {\rm d}^2F_t \le  32b_0^2 N^{\fr{3}{2}+p}E^{\fr{1}{2}-p}. \lb{JE}\ees
	For the cubic term,we use the following decomposition again
	\bes \int_{{\mathbb R}_{\ge 0}^3}{\cal K}[\vp_{\vep,p}]{\rm d}^3F_t
	&=&\left(2\int_{0\le x<y<z}+2\int_{0\le y<x<z}+
	\int_{0\le x<y=z}+\int_{0\le y, z<x}
	\right)W(x,y,z)\Dt\vp_{\vep,p}(x,y,z){\rm d}^3F_t\nonumber \\
&=&\int_{0\le x<y\le z}\chi_{y,z}W(x,y,z)\Dt_{\rm sym}\vp_{\vep,p}(x,y,z){\rm d}^3F_t
	\nonumber\\
	&+&
	2\int_{0\le x<y<z}\big(W(y,x,z)-W(x,y,z)\big)\Dt\vp_{\vep,p}(y,x,z){\rm d}^3F_t.\nonumber \\
&+&\int_{0<y, z<x} W(x,y,z)
	\Dt\vp_{\vep,p}(x,y,z){\rm d}^3F_t. \lb{de} \ees
	Using Lemma \ref{lemma0},(\ref{4.7}) and the fact that $\eta\ge \fr{3}{2}$, $0<p\le \fr{1}{2}$, $\varphi_{\vep,p}$ is convex and decreasing  we can get the following estimates:
	\bes &&\int_{0\le x<y\le z}\chi_{y,z}W(x,y,z)\Dt_{\rm sym}\vp_{\vep,p}(x,y,z){\rm d}^3F_t \le 2\int_{0\le x<y\le z}W(x,y,z)\Dt_{{\rm sym}}\vp_{\vep,p}(x,y,z){\rm d}^3F_t  \nonumber \\
	&&=2\int_{0\le x<y<\fr{z}{2}}W(x,y,z)\Dt_{{\rm sym}}\vp_{\vep,p}(x,y,z){\rm d}^3F_t+2\int_{0\le x<y\le z,y\ge \fr{z}{2}}W(x,y,z)\Dt_{{\rm sym}}\vp_{\vep,p,}(x,y,z){\rm d}^3F_t  \nonumber \\
	&&\le \int_{0\le x<y<\fr{z}{2}}8b_0^2 \fr{\min\{1,(8z)^\eta\}}{\sqrt{yz}}p(p+1) \fr{(y-x)^2}{(\vep+z+x-y)^{p+2}}{\rm d}^3F_t \nonumber \\ &&+\int_{0\le x<y\le z,y\ge \fr{z}{2}}8b_0^2 \fr{\min\{1,(8z)^\eta\}}{\sqrt{yz}}\vp_{\vep,p}(z+x-y){\rm d}^3F_t \nonumber \\
	&&\le \int_{0\le x<y<\fr{z}{2}}8b_0^2p(p+1) \fr{\min\{1,(8z)^\eta\}}{\sqrt{yz}} \fr{y^2}{(\fr{z}{2})^{p+2}}{\rm d}^3F_t+\int_{0\le x<y\le z,y\ge \fr{z}{2}}8\sqrt{2}b_0^2 \fr{\min\{1,(8z)^\eta\}}{z}\vp_{\vep,p}(x){\rm d}^3F_t  \nonumber \\
	&&\le 8^{2+\eta}b_0^2 N^3+ 8^{2+\eta}b_0^2N^2 M_{-p}^{\vep}(F_t), \lb{KK1}\ees
	\bes && 2\int_{0\le x<y<z}\big(W(y,x,z)-W(x,y,z)\big)\Dt\vp_{\vep,p}(y,x,z){\rm d}^3F_t \nonumber\\
	&&\le 2\int_{0\le x<y<z,W(x,y,z)\ge W(y,x,z) }\big(W(x,y,z)-W(y,x,z)\big)\vp_{\vep,p}(x){\rm d}^3F_t \nonumber \\
	&& \le 2\int_{0\le x<y<\fr{z}{2}}\big(W(x,y,z)-W(y,x,z)\big)\vp_{\vep,p}(x){\rm d}^3F_t	\nonumber \\&&+2\int_{0\le x<y<z,y\ge \fr{z}{2}, }\big(W(x,y,z)-W(y,x,z)\big)\vp_{\vep,p}(x){\rm d}^3F_t  \nonumber\\
	&&\le \int_{0\le x<y<\fr{z}{2}}2q_1\fr{y}{z}W(y,x,z)\vp_{\vep,p}(x){\rm d}^3F_t+\int_{0\le x<y<z,y\ge \fr{z}{2}}2W(x,y,z)\vp_{\vep,p}(x){\rm d}^3F_t \nonumber\\
	&&\le \int_{0\le x<y<\fr{z}{2}}q_1\fr{y}{z}8b_0^2 \fr{\min\{1,(8z)^\eta\}}{\sqrt{yz}}\vp_{\vep,p}(x){\rm d}^3F_t+\int_{0\le x<y<z,y\ge \fr{z}{2}} 8b_0^2\fr{\min\{1,(8z)^\eta\}}{\sqrt{yz}}\vp_{\vep,p}(x){\rm d}^3F_t\nonumber\\
	&&\le 8^{1+\eta}b_0^2N^2 q_1M_{-p}^{\vep}(F_t)+8^{2+\eta}b_0^2N^2 M_{-p}^{\vep}(F_t). \lb{KK2}\ees

	Since  $0<p\le \fr{1}{2}$, we have that   $\sqrt{t}\vp_{\vep,p}(t)$ is non-decreasing, thus $\sqrt{x_*}\fr{\vp_{\vep,p}(x_*)}{\sqrt{y}}\le \vp_{\vep,p}(y) $ for all $0<y\le z<x<y+z$. Using this inequality we can get the following estimate:
	\bes I_4&=&\int_{0<y, z<x} W(x,y,z)
	\Dt\vp(x,y,z){\rm d}^3F
	\le 8b_0^2\int_{0<y\le z<x<y+z} \fr{\min\{1,\{8x\}^\eta\}}{\sqrt{x}\sqrt{y}\sqrt{z}}\sqrt{x_*}
	\Dt\vp_{\vep,p}(x,y,z){\rm d}^3F \nonumber \\
	&\le& 8b_0^2\int_{0<y\le z<x<y+z} \fr{\min\{1,\{8x\}^\eta\}}{\sqrt{x}\sqrt{y}\sqrt{z}}\sqrt{x_*}
	\vp_{\vep,p}(x_*){\rm d}^3F  \le 8b_0^2\int_{0<y\le z<x<y+z} \fr{\min\{1,\{8x\}^\eta\}}{\sqrt{x}\sqrt{z}}
	\vp_{\vep,p}(y){\rm d}^3F\nonumber \\
	&\le& 8^{2+\eta}b_0^2N^2M_{-p}^{\vep}(F_t) \lb{KK3} \ees
	Combining  (\ref{JE}),(\ref{de}), (\ref{KK1}), (\ref{KK2}), (\ref{KK3}) , we prove the following inequality:
	$$\fr{d}{dt}M_{-p}^{\vep}(F_t)\le 8^2b_0^2 N^{\fr{3}{2}+p}E^{\fr{1}{2}-p}+8^{2+\eta}b_0^2 N^3+8^{3+\eta}b_0^2N^2(1+q_1)M_{-p}^{\vep}(F_t)=a+bM_{-p}^{\vep}(F_t).$$
	Solving this differential inequality we obtain
	$$ M_{-p}^{\vep}(F_t)\le (8^2b_0^2 N^{\fr{3}{2}+p}E^{\fr{1}{2}-p}+8^{2+\eta}b_0^2 N^3)te^{8^{3+\eta}b_0^2N^2(1+q_1)t}+e^{8^{3+\eta}b_0^2N^2(1+q_1)t}M_{-p}^{\vep}(F_0).$$
	Let $\epsilon \to 0^+$ and using the monotone convergence theorem, the above inequality yields
	$$M_{-p}(F_t)\le  (8^2b_0^2 N^{\fr{3}{2}+p}E^{\fr{1}{2}-p}+8^{2+\eta}b_0^2 N^3)te^{8^{3+\eta}b_0^2N^2(1+q_1)t}+e^{8^{3+\eta}b_0^2N^2(1+q_1)t}M_{-p}(F_0)<\infty \quad \forall t\ge 0, $$
	which is the desired result.
\end{proof}
\begin{center}\section { Moment Production and Weak Convergence }\end{center}
To prove moment production and positive lower bound of entropy, as the same in \cite{Cai-Lu}, we inroduce the following definition of a class of approximate solutions:
\begin{definition}\label{definition3.1} Let $B({\bf {\bf v-v}_*},\omega)$ be given by
	(\ref{kernel}), (\ref{kernel2}).
	We say that $\{B_K({\bf {\bf v-v}_*},\omega)\}_{K\in {\mN}}$ is a sequence of approximation of $B({\bf {\bf v-v}_*},\omega)$
	if
	$B_K({\bf {\bf v-v}_*},\omega)$ are such Borel measurable functions on ${\bRS}$ that they are functions of
	$(| {\bf v}-{\bf v}'|, |{\bf v}-{\bf v}_*'|)$ only and satisfy
	$$B_K({\bf v-v}_*,\omega)\ge 0, \quad \lim\limits_{K\to\infty}
	B_K({\bf v-v}_*,\omega)=
	B({\bf v-v}_*,\omega)$$ for a.e $({\bf v-v}_*,\omega)
	\in {\bR}\times {\bS}$.
	Let $Q_K(f)$ be the collision integral operators corresponding to the approximate kernels $B_K$, i.e.
	\be Q_K(f)({\bf v})=
	\int_{{\bRS}}B_K({\bf
		{\bf v-v}_*},\og)\big(f'f_*'(1+f+f_*)-ff_*(1+f'+f_*')\big) {\rm d}\omega{\rm
		d}{\bf v_*}.\label{3.Equation}\ee
	Given any $K\in {\mN}$ and $0\le f^K_0\in L^1_2({\bR})$. We
	say that $f^K=f^K({\bf v},t)$ is a conservative approximate solution of Eq.(\ref{Equation1}) on ${\bR}\times [0,\infty)$ corresponding to the approximate kernel $B_K$  with the initial datum $f^K_0$ if
	$({\bf v},t)\mapsto f^K({\bf v},t)$ is a nonnegative Lebesgue measurable function on ${\bR}\times [0,\infty)$ satisfying
	
	{\rm(i)}
	$\sup_{t\ge 0}\|f^K(t)\|_{L^1_2}<\infty$ (here and below $f^K(t):=f^K(\cdot,t)$) and
	\be\int_{0}^{T}{\rm d}t
	\int_{{\bRRS}}B_K({\bf
		{\bf v-v}_*},\og)(f^K)'(f^K)_*'\big(1+f^K+f^K_*\big)\sqrt{1+|{\bf v}|^2+|{\bf v}_*|^2}\,{\rm d}\omega{\rm
		d}{\bf v}{\rm d}{\bf v_*}<\infty\lb{3.new1}\ee
	for all $0<T<\infty$.
	
	{\rm(ii)}  There is a null set
	$Z\subset {\bR}$ which is independent of\, $t$ such that
	\be
	f^K({\bf v},t)=f^K_0({\bf v})+\int_{0}^{t}Q_K(f^K)({\bf v},\tau){\rm d}\tau\quad \forall\, t\in[0,\infty),\,\,\forall\,{\bf v}\in {\bR}\setminus Z.\lb{3.new2}\ee

	{\rm(iii)}  $f^K$ conserves the mass, momentum, and energy, and satisfies the entropy
	equality, i.e.
	\bes &&
	\int_{{\bR}}(1,{\bf v}, |{\bf v}|^2/2) f^K({\bf v},t){\rm d}{\bf v}=
	\int_{{\bR}}(1,{\bf v}, |{\bf v}|^2/2) f^K_0({\bf v}){\rm d}{\bf v}\qquad \forall\, t\ge 0
	\label{energyconser} \\
	&& S(f^K(t))=S(f^K_0)+\int_{0}^{t}D_K(f^K(\tau)){\rm d}\tau\qquad\forall\, t\ge 0.
	\dnumber\label{eelity}\ees
	Here $Q_K(f^K)({\bf v},t)=Q_K(f^K(\cdot,t))({\bf v})$, $D_K(f)$ is the entropy dissipation corresponding to the approximate kernel $B_K({\bf
		{\bf v-v}_*},\og)$, i.e.
	\be D_K(f)=\fr{1}{4}
	\int_{{\bRRS}}B_K({\bf
		{\bf v-v}_*},\og)\Pi(f)\Gm(g'g_*', gg_*) {\rm d}\omega{\rm
		d}{\bf v_*}{\rm
		d}{\bf v}\label{3.2}\ee
	Where \be \Gm(a,b)=
\left\{\begin{array}{ll}
	\displaystyle (a-b)\log\big(\fr{a}{b}\big)\,\qquad {\rm if}
	\quad a>0, b>0\\
	\displaystyle
	\,\infty\qquad \quad\qquad \qquad \,\,  {\rm if}\quad
	a>0=b\,\,\,{\rm or}\,\,\, a=0<b\\
	\displaystyle
	\,0\qquad \qquad \qquad \quad\,\,\,\,\,  {\rm if}\quad a=b=0
\end{array}\right.\lb{2.Gamma}\ee
\be\Pi(f)=(1+f)(1+f_*)(1+f')(1+f_*'),\quad g=\fr{f}{1+f},\quad \lb{2.3}\ee
	
	\noindent If a conservative approximate  solution $f^K$ is isotropic, i.e. if
	$f^K({\bf v},t)\equiv f^K(|{\bf v}|^2/2,t)$,
	then $f^K$ is called a conservative isotropic approximate
	solution of Eq.(\ref{Equation1}).In this case,if we define $h^K(x)= f^K(|{\bf v}|^2/2,t)$ for $x=|{\bf v}|^2/2$,then $h^K$ is a mild solution in the sense of Definition (\ref{definition 1-1}).
\end{definition}
A suitbale class of $B_K$ that was often be used is
\be B_K({\bf {\bf v-v}_*},\omega)=\min\big\{B({\bf {\bf v-v}_*},\omega),\, K|{\bf v}-{\bf v}'|^2
|{\bf v}-{\bf v}_*'|\big\},\quad K\ge 1. \label{3.1}\ee

An important Theorem will often be used below is Theorem 1 in \cite{Lu2004} (weak stability). Notice that the condition $\int^1_0 B(V,\tau){\rm d}\tau >0$ for all $V>0$ was not used in the proof of Theorem 1 in \cite{Lu2004} (weak stability), so using Appendix of \cite{Cai-Lu} (Equivalence of Solutions) we would like to rephrase that Theorem into the following form.
\begin{theorem}\lb{weak stability}\cite{Lu2004}({\bf Weak Stability}). Let $B,B_K$ be collision kernels satisfying the conditions \par
	{\rm(i)}$B(\cdot,\cdot) \in C({\mathbb R}_{\ge 0}\times [0,1])$,
\par
	{\rm(ii)} $\sup\limits_{V\ge 0,\tau \in [0,1]} \fr{B(V,\tau)}{1+V}<\infty, \quad \sup\limits_{V\ge 0} \fr{B(V,\tau)}{1+V}\to 0 \quad as \quad \tau \to 0^+.$
\par
\noindent and either $B_K\equiv B $ ($\forall K \ge 1)$ or $B_K$ be the cutoff of $B$ given by (\ref{3.1}). Let $F_0,F^K_0\in {\cal B}_{1}^{+}({\mR}_{\ge 0})$ satisfying
$$\sup\limits_{K \ge 1} \int_{{\mR}_{\ge 0}} (1+x){\rm d}F^K_0(x)<\infty $$
and
$$\lim\limits_{n\to \infty}\int_{{\mR}_{\ge 0}}\varphi(x){\rm d}F^K_0(x)=\int_{{\mR}_{\ge 0}}\varphi(x){\rm d}F_0(x) \quad \quad \forall \varphi \in C_b({\mR}_{\ge 0}) $$
Let $F^n_t$ be conservative distributional solutions of Eq.(\ref{Equation1}) with kernel $B_n$ and initial datum $F^n_0$. Then there exist a subsequence $\{F^{K_j}_t\}_{j=1}^{\infty}$ and a conservative  distributional $F_t$ of Eq.(\ref{Equation1}) with the kernel $B$ and initial datum $F_0$ such that
$$\lim\limits_{n\to \infty}\int_{{\mR}_{\ge 0}}\varphi(x){\rm d}F^{K_j}_t(x)=\int_{{\mR}_{\ge 0}}\varphi(x){\rm d}F_t(x)\quad  \forall \,\, t\ge 0,\quad  \varphi \in C_b({\mR}_{\ge 0}) $$
(therefore $F_t$ conserves the mass ) and
$$\int_{{\mR}_{\ge 0}}x{\rm d}F_t(x)=\liminf\limits_{j \to\infty}\int_{{\mR}_{\ge 0}}x{\rm d}F^{K_j}_t(x).$$
Furthermore if
$$\lim\limits_{K \to \infty} \int_{{\mR}_{\ge 0}} x{\rm d}F^K_0(x)= \int_{{\mR}_{\ge 0}} x{\rm d}F_0(x), $$
then the solution $F_t$ also conserves the energy:
$$\int_{{\mR}_{\ge 0}} x{\rm d}F_t(x)=\int_{{\mR}_{\ge 0}} x{\rm d}F_0(x). $$
\end{theorem}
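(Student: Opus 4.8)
The plan is to recognize this statement as, essentially, Theorem~1 of \cite{Lu2004} transcribed into the language of measure-valued isotropic solutions on ${\mR}_{\ge 0}$; the only new ingredients are (a) checking that the positivity hypothesis $\int_0^1 B(V,\tau){\rm d}\tau>0$ (for all $V>0$) is never used in that proof, so that it may be dropped, and (b) invoking the Equivalence of Solutions from the Appendix of \cite{Cai-Lu} to identify the conservative distributional solutions in the statement with conservative measure-valued isotropic solutions in the sense of Definition~\ref{definition1.1}. Thus the substance is a compactness argument in the space of time-dependent measures, which I would organise as follows.

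First I would obtain uniform-in-$K$ bounds. In all cases $B_K\le B$ pointwise, so all the first-moment estimates are controlled by the single fixed kernel $B$; combined with conservation of mass for each $F^K_t$ and the hypothesis $\sup_K\|F^K_0\|_1<\infty$, this gives $\sup_K\sup_{t\ge0}\|F^K_t\|_1<\infty$ (for the bounded cutoff energy is exactly conserved by $F^K_t$, and for $B_K\equiv B$ it is conserved by assumption, so in either case $\sup_K\sup_t E(F^K_t)\le\sup_K E(F^K_0)<\infty$). Next, from the weak formulation~(\ref{Equation3}) I would bound $\big|\frac{{\rm d}}{{\rm d}t}\int\varphi\,{\rm d}F^K_t\big|\le C(\varphi)$ uniformly in $K$, for each fixed $\varphi\in C^{1,1}_b({\mR}_{\ge0})$: the quadratic term is handled by the classical estimate $W(x,y,z)\sqrt{x}\le\sqrt{2/(y+z)}$-type bounds (valid for $B$, hence for $B_K$) paired with $\|\varphi''\|_\infty$, and the cubic term by the analogous structural estimates on $W$ together with $N(F^K_t)=N(F^K_0)$. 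Then, taking a countable dense subfamily of test functions and applying a Helly/diagonal selection, I extract a subsequence $\{F^{K_j}_t\}$ converging weakly at every $t\ge0$ to some $F_t\in{\cal B}^+_1({\mR}_{\ge0})$ with $t\mapsto\int\varphi\,{\rm d}F_t$ continuous.

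The heart of the argument — and the step I expect to be the main obstacle — is passing to the limit in the quadratic and cubic collision integrals, since ${\cal K}[\varphi]$ carries both a singular structure near $x=0$ (through $W$) and unboundedness in $x,y,z$. Here one uses that $B_{K_j}\to B$ a.e.\ with $0\le B_{K_j}\le B$, that the uniform first-moment bound gives tightness at infinity, and that the structural estimates on $W$ control the integrands near the origin, to run a uniform-integrability argument and conclude $\int{\cal J}_{K_j}[\varphi]\,{\rm d}^2F^{K_j}_\tau\to\int{\cal J}[\varphi]\,{\rm d}^2F_\tau$ and $\int{\cal K}_{K_j}[\varphi]\,{\rm d}^3F^{K_j}_\tau\to\int{\cal K}[\varphi]\,{\rm d}^3F_\tau$; dominated convergence in $\tau$ then yields the weak equation for $F_t$ with kernel $B$, and $\varphi\equiv1$ gives mass conservation. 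For the energy, weak lower semicontinuity of $F\mapsto\int x\,{\rm d}F$ gives $\int x\,{\rm d}F_t\le\liminf_j\int x\,{\rm d}F^{K_j}_t$; passing to a further subsequence realising the liminf and repeating the extraction yields the stated equality. Finally, when additionally $\int x\,{\rm d}F^K_0\to\int x\,{\rm d}F_0$, energy conservation of $F_t$ follows by testing against $x\wedge R$, using conservation for each $F^{K_j}_t$, letting $j\to\infty$ and then $R\to\infty$, the tail $\int_{x>R}x\,{\rm d}F^{K_j}_t$ being controlled uniformly in $j$ and in $t\in[0,T]$ by a moment-propagation (or truncation) bound; ruling out a loss of energy to the origin in this last step is the only genuinely delicate point, and it is precisely there that we invoke the estimates of \cite{Lu2004} rather than reproduce them, while simultaneously verifying that the unused positivity hypothesis never enters.
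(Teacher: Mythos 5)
Your proposal matches the paper's treatment: the paper supplies no proof of this theorem, but simply rephrases Theorem 1 of \cite{Lu2004}, observing that the positivity hypothesis $\int_0^1 B(V,\tau)\,{\rm d}\tau>0$ is never used there and invoking the Equivalence of Solutions in the Appendix of \cite{Cai-Lu} to pass between the notions of solution — which is exactly the content of your opening paragraph. Your further sketch of the underlying compactness argument (uniform moment bounds, equicontinuity, diagonal extraction, passage to the limit in the quadratic and cubic integrals, lower semicontinuity for the energy) is consistent with the standard proof in that reference and goes beyond what the paper itself records.
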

\par

\vskip2mm
\begin{proposition}\lb{Mproduct} Let the collision kernel $B({\bf {\bf v-v}_*},\omega)$ be given by
	(\ref{kernel}),(\ref{kernel2}),(\ref{Phi}) where the Fourier transform
$r\mapsto \widehat{\phi}(r)$  is  non-negative on $[0,\infty)$ and satifies
\be\lb{3.10} a_0 r^{-\beta}1_{[R, \infty)}(r)\le \widehat{\phi}(r)\le b_1\qquad \forall\, r>0 \ee
 for some constants $a_0>0, 0\le \beta<1/2, 0<R<\infty$. Given any $N>0, E>0$.
	
	{\rm(I)} Let $B_K({\bf {\bf v-v}_*},\omega)$
	be given by (\ref{3.1}) ,
	let $\{f_0^K=f^K_0(|{\bf v}|^2/2)\}_{K\in{\mN}} $ be any sequence of nonnegative
	isotropic functions in $L^1_2({\bR})$  satisfying
	\be \int_{{\bR}}(1, |{\bf v}|^2/2)f^K_0(|{\bf v}|^2/2){\rm d}{\bf v}= 4\pi\sqrt{2}(N, E)\qquad
	\forall\, K\in{\mN}.\lb{entropy.3}\ee
	Then for every $K\in{\mN}$, there exist a unique conservative isotropic approximate  solution
	$f^K=f^K(|{\bf v}|^2/2,t)$ of
	Eq.(\ref{Equation1}) on ${\bR}\times [0,\infty)$  corresponding to the approximate kernel
	$B_K$ such that $f^K|_{t=0}=f^K_0$, and
	it holds the moment production:
	\be \sup_{K\in{\mN}}\|f^K(t)\|_{L^1_s}\le C_s(1+1/t)^{s-2}\quad \forall\, t>0,\,\,\forall\, s>2 \label{mprod}\ee
	where the constant $0<C_s<\infty$ depends only on $N, E,s$,$a_0$,$b_1$,$R$ and $\beta$.

	{\rm(II)}
	Let $F_0\in {\mathcal B}_1^{+}({\mathbb R}_{\ge 0})$ satisfy
	$N(F_0)=N, E(F_0)=E$.  Then there exists a conservative  measure-valued isotropic solution $F_t$ of Eq.(\ref{Equation1}) on $[0,\infty)$ with the initial datum $F_0$, such that
	\be M_{p}(F_t)\le C_p(1+1/t)^{2(p-1)}\quad \forall\,t>0,\,\,\forall\, p>1\lb{Mprod}\ee
	where the constant $0<C_p<\infty$ depends only on
	$N, E,p$,,$a_0$,$b_1$ ,$R$ and $\beta$.
\end{proposition}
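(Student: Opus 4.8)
I would prove (I) first and deduce (II) from it by a weak-stability limit. For (I), existence and uniqueness of the conservative isotropic approximate solution $f^K$ with $f^K|_{t=0}=f^K_0$ (conserving mass, momentum, energy and satisfying the entropy equality) is not the main point: $B_K$ in (\ref{3.1}) is a bounded (Grad cut-off) kernel, so this follows from the known theory --- Theorem~2 of \cite{Lu2004}, the equivalence of solutions in the Appendix of \cite{Cai-Lu}, and the uniqueness argument of \cite{Lu2014}. Writing ${\rm d}F^K_t(x)=f^K(x,t)\sqrt x\,{\rm d}x$ (so $N(F^K_t)=N$, $E(F^K_t)=E$), the crux is a moment differential inequality
$$\frac{{\rm d}}{{\rm d}t}M_p(F^K_t)\le a+b\,M_p(F^K_t)-c\,M_{p+\delta}(F^K_t),\qquad p>1,$$
with $\delta=\tfrac12-\beta>0$ and $a,b,c>0$ depending only on $N,E,p,a_0,b_1,R,\beta$, uniformly in $K$; one tests Definition~\ref{definition1.1}(iii) with a $C^{1,1}_b$-truncation of $x\mapsto x^p$ and lets the truncation parameter go to $\infty$ by monotone convergence, so that the right-hand side is $\int_{{\mathbb R}_{\ge 0}^2}\mathcal J[x^p]\,{\rm d}^2F^K_t+\int_{{\mathbb R}_{\ge 0}^3}\mathcal K[x^p]\,{\rm d}^3F^K_t$.

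The heart is a Povzner-type estimate for $\mathcal J$. With $h(w)=w^p+(y+z-w)^p$ --- convex on $[0,y+z]$, symmetric about the midpoint, minimal there --- one has $\mathcal J[x^p](y,z)=\tfrac12\int_0^{y+z}W(x,y,z)\,(h(x)-h(y))\,\sqrt x\,{\rm d}x$, and for $0<y\le z$ the factor $h(x)-h(y)$ is $\le0$ on the long middle interval $x\in(y,z)$ and $\ge0$ only on the short tails $x\in[0,y]\cup(z,y+z]$. On the tails I use $0\le h(x)-h(y)\le h(0)-h(y)=(y+z)^p-y^p-z^p\le c_p z^{p-1}y$, the elementary $W\le 4b_1^2W_H$, the bound $W_H(x,y,z)\sqrt x\le x/\sqrt{yz}$ (which vanishes as $x\to0$), and the change of variables $x\mapsto y+z-x$ that reduces the $(z,y+z]$-tail to the $[0,y]$-tail; this contributes at most $C(N,E,b_1,p)M_p(F^K_t)+C'$. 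On the middle I use $\widehat\phi(r)\ge a_0 r^{-\beta}{\bf 1}_{[R,\infty)}(r)$: unwinding (\ref{W1}), (\ref{Y}) and Remark~\ref{remark0}, the relevant $s$-range (of length $2\min\{\sqrt x,\sqrt{x_*},\sqrt y,\sqrt z\}$) sits near $s\asymp\sqrt x$, so $\Phi(\sqrt2 s,\sqrt2 Y_*)\ge a_0^2(\sqrt2 s)^{-2\beta}\gtrsim a_0^2 x^{-\beta}$ once $x\gtrsim R^2$, hence $W(x,y,z)\ge c(a_0,R,\beta)\,z^{-\beta}W_H(x,y,z)$ there; together with the hard-sphere computation $\int_y^z W_H(x,y,z)(h(x)-h(y))\sqrt x\,{\rm d}x\asymp-\tfrac{p-1}{p+1}z^{p+1/2}$ (valid once $z\ge 2y$) this gives $\mathcal J[x^p](y,z)\le C_p(\text{cross terms in }M_p) - c_p\,z^{\,p+\delta}$ for $z\ge 2y$, and symmetrically for $y\ge 2z$. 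Integrating against ${\rm d}^2F^K_t$, controlling the cross terms by $C(N,E,b_1,p)M_p(F^K_t)+C'$ and turning $z^{p+\delta}$ into a moment via Chebyshev ($F^K_t(\{y\le z/2\})\ge N/2$ for $z\ge 4E/N$), gives $\int\mathcal J[x^p]\,{\rm d}^2F^K_t\le C(N,E,b_1,p)M_p(F^K_t)+C'-c(N,a_0,R,\beta,p)M_{p+\delta}(F^K_t)$. The cubic term $\int\mathcal K[x^p]\,{\rm d}^3F^K_t$ is then shown to be $\le C(N,E,b_1,p)M_p(F^K_t)+C'$ --- hence harmless --- by running the symmetric decomposition of $\int\mathcal K[\varphi]\,{\rm d}^3F$ exactly as in the proof of Theorem~\ref{theorem1.3} (using $W\le 4b_1^2W_H$, $W(x,y,z)=0$ for $x\ge y+z$, $W_H(x,y,z)=1/\sqrt{yz}$ on $0<x<\min\{y,z\}$, and the Taylor form (\ref{4.7}) of $\Delta_{\rm sym}\varphi$), the extra ${\rm d}F(x)$-integration costing only a factor $\le N$.

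Once the differential inequality is established, Hölder interpolation between $M_1=E$ and $M_{p+\delta}$ gives $M_{p+\delta}(F^K_t)\ge E^{-\delta/(p-1)}\big(M_p(F^K_t)\big)^{1+\delta/(p-1)}$, so whenever $M_p(F^K_t)$ exceeds a threshold $M_\ast=M_\ast(N,E,p,a_0,b_1,R,\beta)$ one has $\tfrac{{\rm d}}{{\rm d}t}M_p(F^K_t)\le-\tfrac c2 E^{-\delta/(p-1)}(M_p(F^K_t))^{1+\delta/(p-1)}$, and the standard comparison for $u'\le-\kappa u^{1+\alpha}$ ($\alpha=\delta/(p-1)$) yields $M_p(F^K_t)\le\max\{M_\ast,(\kappa\alpha t)^{-1/\alpha}\}\le C_p(1+1/t)^{(p-1)/\delta}$ for all $t>0$, uniformly in $K$. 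Since $\|f(|\cdot|^2/2)\|_{L^1_s}$ is comparable (up to the constant $4\pi\sqrt2$ and lower-order terms) to $M_{s/2}(F)$, taking $p=s/2>1$ turns this into (\ref{mprod}) with exponent $(p-1)/\delta$, which is the asserted $s-2$ when $\delta=\tfrac12$ (the case $\beta=0$); in general $\delta=\tfrac12-\beta>0$, the hypothesis $\beta<\tfrac12$ being exactly what keeps $\delta$ positive and hence forces the super-linear dissipation of $M_p$. For (II): given $F_0$ with $N(F_0)=N$, $E(F_0)=E$, pick isotropic $f^K_0\in L^1_2(\bR)$ with $\int_{\bR}(1,|{\bf v}|^2/2)f^K_0(|{\bf v}|^2/2)\,{\rm d}{\bf v}=4\pi\sqrt2(N,E)$ and $f^K_0(|\cdot|^2/2)\sqrt{\,\cdot\,}\rightharpoonup F_0$ (standard truncation/mollification); apply Theorem~\ref{weak stability} to $(B_K,F^K_0)$ to extract a subsequence along which $F^K_t\to F_t$ for every $t\ge0$, where $F_t$ is a conservative measure-valued isotropic solution of Eq.(\ref{Equation1}) with datum $F_0$ (energy is conserved since $E(F^K_0)\to E(F_0)$); as $x\mapsto x^p$ is nonnegative and lower semicontinuous, $M_p(F_t)\le\liminf_K M_p(F^K_t)\le C_p(1+1/t)^{(p-1)/\delta}$, which is (\ref{Mprod}).

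The main obstacle is the lower bound for $W$ on the dissipative middle range inside the Povzner estimate: the upper bound $W\le 4b_1^2W_H$ is immediate, but making the dissipation effective requires showing, through the definitions (\ref{W1})--(\ref{Y}), that on the part of $(0,y+z)$ where $h(x)-h(y)$ is substantially negative there is a non-degenerate set of $(s,\theta)$ on which $\sqrt2 s$ (or $\sqrt2 Y_*$) lies in $[R,\infty)$ with $\widehat\phi$ there $\gtrsim a_0(\text{energy scale})^{-\beta/2}$, and then that the resulting dissipation still beats the anti-dissipative tail. Keeping all constants uniform in $K$ with exactly the dependence $N,E,p,a_0,b_1,R,\beta$, and the cubic-term estimate (routine once the decomposition of Theorem~\ref{theorem1.3} is available), are the secondary technical points.
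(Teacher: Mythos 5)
Your proposal is correct in substance and arrives at the same differential inequality and the same decay exponent as the paper, but by a genuinely different route in the key coercivity step. The paper works in the velocity variables: it applies the Povzner--Elmroth inequality $\la {\bf v'}\ra^s+\la {\bf v'_*}\ra^s-\la {\bf v}\ra^s-\la {\bf v_*}\ra^s\le 2^{s+1}(\cdots)-2\cos^2\theta\sin^2\theta\la {\bf v}\ra^s$ and extracts the dissipation from the angular average $\int_{\bS}B_K\cos^2\theta\sin^2\theta\,{\rm d}\og\ge A\,|{\bf v}-{\bf v}_*|^{1-2\beta}$ for $|{\bf v}-{\bf v}_*|\ge 2R$ together with Lemma 10 of \cite{Lu2000}; the cubic term is dominated directly by $|{\cal K}_{B_K}[\vp]|\le 4b_1^2s^2(1+y+z)^{s/2-1}$. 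You instead run the Povzner argument on the isotropic kernel $W(x,y,z)$ itself, which forces you to prove the pointwise lower bound $W\gtrsim z^{-\beta}W_H$ on the middle range $y<x<z$ by unwinding (\ref{W1})--(\ref{Y}); you correctly identify this as the main obstacle, and your analysis of the $s$-range ($s\asymp\sqrt x$, of length $2\sqrt y$, so $\sqrt2\,s\ge R$ once $x\gtrsim R^2$) is the right way to carry it out. The paper's route is technically lighter here because the angular average does the work and, importantly, because keeping $\min\{\cos^2\theta\sin\theta,\,\tfrac{a_0^2}{(4\pi)^2}\cos\theta\}$ inside the integral defining the constant $A$ disposes of the $K$-cutoff at the same time; in your formulation you must still check that the lower bound survives for the cutoff kernel corresponding to $B_K$ uniformly in $K\ge1$ (for fixed energies the truncation $K|{\bf v}-{\bf v}'|^2|{\bf v}-{\bf v}_*'|$ is bounded below on the dissipative range, so this works, but it needs to be said). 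Both arguments end with $u'\le Cu-cu^{1+\alpha}$, $\alpha=(1-2\beta)/(s-2)$, and the comparison argument of \cite{Wennberg}; both therefore produce the exponent $(s-2)/(1-2\beta)$ rather than the $s-2$ displayed in (\ref{mprod}) --- you flag this honestly, and the paper's own computation yields exactly the same exponent, so the discrepancy lies in the statement, not in either proof. Part (II) is identical in both (Theorem \ref{weak stability} plus lower semicontinuity of $F\mapsto M_p(F)$ under weak convergence). Two minor slips in your write-up: the tail bound should read $W_H(x,y,z)\sqrt x\le\sqrt x/\sqrt{yz}$ rather than $x/\sqrt{yz}$, and your cubic-term estimate, while correct, does not need the full symmetric decomposition from the proof of Theorem \ref{theorem1.3} --- no sign information is required there, so the crude bound used in the paper suffices.
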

\noindent\begin{proof}We only need to prove part (I), part (II) is only an application of part (I) and .
	The existence
	of the
	conservative isotropic approximate  solutions
	$f^K$ has been  proven in
	Theorem 3 of \cite{Lu2000}. So we only need to prove moment production and uniqueness. Without loss of generality, we can assume $R\ge 1$.
	
	For notation convenience we denote (with $K$ fixed)
	$$f^K(|{\bf v}|^2/2,t):=f^K(|{\bf v}|^2/2,t).$$
	To prove (\ref{mprod}) ,we prove it holds for  the case that $\|f_0\|_{L^1_s}<\infty$ for all $s>2$, then we can get the general case by Theorem \ref{weak stability}.
	By Theorem 3 in \cite{Lu2000} and further cut-off $ B_{K,n}({\bf v},-{\bf v}_*,\og)= B_K({\bf v},-{\bf v}_*,\og)\wedge n$, there exists a conservative solution $f$ such that
	$\sup_{t\in [0,t_1]}\|f(\cdot,t)\|_{L^1_s}<\infty $ for all $t_1>0$ and $s>2$.
	By the same reason in the proof of Theorem 4 in \cite{Lu2004}, we only need to prove the case for $s\ge 4$. We also  need the following version of Povzne-Elmroth inequality (see e.g.\cite{LW} and recall $s\ge 4$.)
	$$\la {\bf v'}\ra^s+\la {\bf v'_*}\ra^s-\la {\bf v}\ra^s-\la {\bf v_*}\ra^s\le 2^{s+1}(\la {\bf v}\ra^{s-1}\la {\bf v}\ra+\la {\bf v}\ra\la {\bf v_*}\ra^{s-1})-2\cos^2\theta\sin^2\theta\la {\bf v}\ra^s. $$
	Now we can compute
	\beas \fr{d\|f(\cdot,t)\|_{L^1_s}}{dt}
	&=&\fr{1}{2}\int_{\mR^3 \times \mR^3 \times S^2}B_Kff_*(\la {\bf v'}\ra^s+\la {\bf v'_*}\ra^s-\la {\bf v}\ra^s-\la {\bf v_*}\ra^s){\rm d}\og {\rm d}{\bf v}{\rm  d}{\bf v}_*\\&+&4\pi \sqrt{2}\int_{\mR^3_+}{\cal K}_{B_K}[\varphi](x,y,z){\rm d}F_t(x){\rm d}F_t(y){\rm d}F_t(z)\\ &\le& 2^{s}\int_{\mR^3\times \mR^3 \times S^2}B_Kff_*(\la {\bf v}\ra^{s-1}\la {\bf v_*}\ra^s+\la {\bf v}\ra\la {\bf v}\ra^{s-1}){\rm d}\og {\rm d}{\bf v} {\rm d}{\bf v}_*\\&-&\int_{\mR^3\times \mR^3 \times S^2}B_K\cos^2\theta \sin^2\theta ff_*\la {\bf v}\ra^s{\rm d}\og {\rm d}v {\rm d}v_* \\&+&\int_{{\mathbb R}_{\ge 0}^3}{\cal K}_{B_K}[\varphi](x,y,z){\rm d}F_t(x){\rm d}F_t(y){\rm d}F_t(z)\\
	&:=& 2^{s}I^{(n)}_1- I^{(n)}_2+4\pi \sqrt{2}I^{(n)}_3,\eeas
	where $\varphi(r)=(1+2r)^{\fr{s}{2}}$.
	Let $A=4\pi \int^{\fr{\pi}{2}}_0 \cos^2\theta \sin^3\theta\min \{\cos^2\theta \sin\theta,\fr{a_0^2}{(4\pi)^2}\cos\theta \}{\rm d}\theta  $ and $C^{(1)}_s$,$C^{(2)}_s$,... denote finite and strictly positve constants that depend only on $a_0,b_1,s$,$R$ and $\beta$. Using the same method \cite{Lu2004} and Appendix of \cite{Cai-Lu} (Equivalence of Solutions), we obtain
	 $$I^{(n)}_1\le C^{(1)}_s\|f(\cdot,t)\|_{L^1_2}\||f(\cdot,t)\|_{L^1_s}.$$
	For $|v-v_*|\ge 2R$, using the condition $ \widehat{\phi}(r) \ge a_0 r^{-\beta}1_{[R, \infty)}(r) $, we calculate
	\beas &&\int_{S^2} B_K({\bf v-v}_*,\og)\cos^2\theta\sin^2\theta {\rm d}\og \\&&\ge |{\bf v-v}_*|\int_{S^2} \cos^2\theta \sin^2\theta\min \{\cos^2\theta \sin\theta,\fr{1}{(4\pi)^2}\cos\theta (\widehat{\phi}(|{\bf v-v}'|)+\widehat{\phi}(|{\bf v-v}_*'|)\big)^2\}{\rm d} \og \\
	&&\ge |{\bf v-v}_*|^{1-2\beta}\int_{S^2} \cos^2\theta \sin^2\theta\min \{\cos^2\theta\sin\theta,\fr{a_0^2}{(4\pi)^2}\cos\theta \}{\rm d}\og
	=|{\bf v-v}_*|^{1-2\beta}A. \eeas
	Using lemma 10 of \cite{Lu2000}, we have
	\beas &&I^{(n)}_2\ge A\int_{|v-v_*|\ge  2R}f({\bf v},t)\la {\bf v}\ra^sf({\bf v}_*,t)|{\bf v-v}_*|A_s {\rm d}{\bf v}{\rm d}{\bf v}_* \\
	&&\ge A \int_{|{\bf v}|>2\sqrt{\fr{E}{N}}+2R}f({\bf v},t)\la {\bf v}\ra^s{\rm d}{\bf v}\int_{|{\bf v}_*|\le 2\sqrt{\fr{E}{N}}}f({\bf v}_*,t)|{\bf v-v}_*|^{1-2\beta}{\rm d}{\rm v}_*  \\
	&&\ge \fr{1}{2}A \int_{|{\bf v}|>2\sqrt{\fr{E}{N}}+2R}f({\bf v},t)\la {\bf v}\ra^s{\rm d}{\bf v}\int_{|{\bf v}_*|\le 2\sqrt{\fr{E}{N}}}f({\bf v}_*)(|{\bf v}|^2+|{\bf v}_*|^2)^{\frac{1}{2}-\beta}{\rm d}{\bf v}_* \\
	&&\ge \fr{3\pi \sqrt{2}NA_s}{2}\int_{|{\bf v}|>2\sqrt{\fr{E}{N}}+2R}f\big(\la {\bf v}\ra^{s+1-2\beta}-\la {\bf v}\ra^{s}\big){\rm d}{\bf v}\\
&&\ge C^{(2)}_s\|f(\cdot,t)\|_{L^1}\big(\|f(\cdot,t)\|_{L^1_{s+1-2\beta}}-(2\sqrt{\fr{E}{N}}+2R+2)\|f(\cdot,t)\|_{L^1_s}\big). \eeas
	Using H{\"o}lder inequality we obtain
	$$\|f(\cdot,t)\|_{L^1_{s+1-2\beta}}\ge (\|f(\cdot,t)\|_{L^1_2})^{-\fr{1-2\beta}{s-2}}(\|f(\cdot,t)\|_{L^1_s})^{1+\fr{1-2\beta}{s-2}}. $$
	This gives
	$$2^{s}I^{(n)}_1-I^{(n)}_2\le C^{(3)}_s\|f(\cdot,t)\|_{L^1_2}\|f(\cdot,t)\|_{L^1_s}  $$
	$$-C^{(4)}_s\|f(\cdot,t)\|_{L^1}(\|f(\cdot,t)\|_{L^1_2})^{-\fr{1-\beta}{s-2}}(\|f(\cdot,t)\|_s)^{1+\fr{1-2\beta}{s-2}}. $$
	For $I_3$, using(\ref{kernel}),(\ref{kernel2}),(\ref{diff1}),(\ref{W1}),(\ref{W2}) and the condition $|\widehat{\phi}(r)|\le b_1$, we deduce that
	$$|{\cal K}_{B_K}[\varphi](x,y,z)|\le 4b_1^2s^2(1+y+z)^{\fr{s}{2}-1}. $$
	So we obtain
	$$4\pi \sqrt{2}I^{(n)}_3\le C^{(5)}_s(\|f(\cdot,t)\|_{L^1})^2\|f(\cdot,t)\|_{L^1_s}.  $$
	
	Now we can see $\|f(\cdot,t)\|_{L^1_s}$ satisfies the following differential inequality
	\begin{align*} \fr{{\rm d}}{{\rm d}t}\|f(\cdot,t)\|_{L^1_s}&\le C^{(6)}_s(1+\|f(\cdot,t)\|_{L^1})\|f(\cdot,t)\|_{L^1_2}\|f(\cdot,t)\|_{L^1_s}\\
	&- C^{(4)}_s\|f(\cdot,t)\|_{L^1}(\|f(\cdot,t)\|_{L^1_2})^{\fr{2\beta -1}{s-2}}(\|f(\cdot,t)\|_{L^1_2})^{1+\fr{1-2\beta}{s-2}}.  \end{align*}
	which implies that(see \cite{Wennberg})
	$$M_s(f(\cdot,t))<\|f(\cdot,t)\|_{L^1_s}\le C^{\fr{s-2}{1-2\beta}}(1+\fr{1}{a})^{\fr{s-2}{1-2\beta}}(1+\fr{1}{t})^{\fr{s-2}{1-2\beta}},  $$
	where
	\beas &&a=\fr{1-2\beta}{s-2}C^{(6)}_s(1+\|f(\cdot,t)\|_{L^1})\|f(\cdot,t)\|_{L^1_2}, \\
	&&C=\fr{C^{(6)}_s(1+\|f(\cdot,t)\|_{L^1})(\|f(\cdot,t)\|_{L^1_2})^{1+\fr{1-2\beta}{s-2}}}{C^{(4)}_s\|f(\cdot,t)\|_{L^1}}. \eeas
This gives the estimate (\ref{mprod}). Having proven the moment production, the proof of the uniqueness is then completely the same as that of Theorem 3 in \cite{Lu2000}.
\end{proof}

\begin{proposition}\lb{prop3.2}  Let the collision kernel $B({\bf {\bf v-v}_*},\omega)$ is given by
	(\ref{kernel}),(\ref{kernel2}),(\ref{Phi})  where the Fourier transform
	$r\mapsto \widehat{\phi}(r)$ is strictly positive in $(0,\infty)$, and satisfies (\ref{3.10}). Given any $N>0, E>0,t_0>0$.
	
	{\rm(I)} Let $B_K({\bf {\bf v-v}_*},\omega)$
	be given by (\ref{3.1}),
	let $\{f_0^K=f^K_0(|{\bf v}|^2/2)\}_{K\in{\mN}} $ be any sequence of nonnegative
	isotropic functions in $L^1_2({\bR})$  satisfying
	\be \int_{{\bR}}(1, |{\bf v}|^2/2)f^K_0(|{\bf v}|^2/2){\rm d}{\bf v}= 4\pi\sqrt{2}(N, E)\qquad
	\forall\, K\in{\mN}.\lb{entropy.3}\ee
	Then for every $K\in{\mN}$, let 	$f^K=f^K(|{\bf v}|^2/2,t)$  be the unique conservative isotropic approximate  solution  of
	Eq.(\ref{Equation1}) on ${\bR}\times [0,\infty)$
	$f^K=f^K(|{\bf v}|^2/2,t)$  corresponding to the approximate kernel
	$B_K$,we have the positive lower bound of entropy as follows:
	\be S(f^K(t))\ge S(f^K(t_0))\ge S_{*}(t_0)\qquad \forall\, t\ge t_0,\,\,\forall\, K\in {\mN}.
	\lb{entropy3.5}\ee
	Where
	\be S_{*}(t_0)=\min\bigg\{\fr{7\pi a^3}{24},\,\fr{4\pi^2 E^2}{5C(1+2/t_0)^2},\,\,
	\min\Big\{
	4m^2,\,\, (4\pi)^2a^2\Big\}
	\fr{7\pi^4 \sqrt{2}a^3E^5 t_0}{96C^3(1+2/t_0)^6}
	\bigg\}\lb{entropy.4}\ee
	and $a=\fr{1}{2}\sqrt{E/N},b=\Big(\fr{C}{2\pi\sqrt{2} E} (1+2/t_0)^2\Big)^{1/2}$, $m=\inf\limits_{a\le \le 2a+b}\widehat{\phi}(r)>0$, $0<C=C_4<\infty$ is the constant in (\ref{mprod}) for $s=4$ so that
	$C$ depends only on $N, E,a_0,b_1$,R and $\beta$.

	{\rm(II)}
Let $F_0\in {\mathcal B}_1^{+}({\mathbb R}_{\ge 0})$ satisfy
$N(F_0)=N, E(F_0)=E$.  Then there exists a conservative  measure-valued isotropic solution $F_t$ of Eq.(\ref{Equation1}) on $[0,\infty)$ with the initial datum $F_0$, such that $F_t$ satisfies moment production (\ref{Mprod}) and
\be S(F_t)\ge S_{*}(t_0)\qquad \forall\, t\ge t_0\lb{entropy3.6}\ee
for all $t_0>0$.
\end{proposition}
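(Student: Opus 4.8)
The plan is to derive part~(II) from part~(I) by a weak-stability limit, and to prove part~(I) in two steps: the easy monotonicity $S(f^K(t))\ge S(f^K(t_0))$, and the genuinely difficult single-time estimate $S(f^K(t_0))\ge S_{*}(t_0)$, which is an adaptation of the positive-lower-bound-of-entropy argument of \cite{Cai-Lu} to the present class of kernels.

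For the monotonicity I would use the entropy equality (\ref{eelity}): since $\Gamma(a,b)\ge 0$ gives $D_K(f)\ge 0$, one has $S(f^K(t))=S(f^K(t_0))+\int_{t_0}^{t}D_K(f^K(\tau))\,{\rm d}\tau\ge S(f^K(t_0))$ for $t\ge t_0$, and applying the same equality on $[0,t_0]$ and discarding $S(f^K_0)\ge 0$ (the integrand $h(u):=(1+u)\log(1+u)-u\log u$ is nonnegative) gives $S(f^K(t_0))\ge\int_{0}^{t_0}D_K(f^K(\tau))\,{\rm d}\tau$. This reduces part~(I) to a lower bound for the time-integrated entropy dissipation.

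For that bound I would first localise using Proposition~\ref{Mproduct}: with $s=4$, (\ref{mprod}) reads $\sup_K\|f^K(\tau)\|_{L^1_4}\le C(1+1/\tau)^{2}$, so on $\tau\in[t_0/2,t_0]$ this is uniformly $\le \mathcal M:=C(1+2/t_0)^{2}$; together with the conserved mass $N$ and energy $E$ this confines a definite fraction of the mass of $f^K(\tau)$ to a fixed shell $\{a\le|{\bf v}|^2/2\le 2a+b\}$ (with $a=\frac{1}{2}\sqrt{E/N}$ and $b$ as in the statement, so $b\sim\sqrt{\mathcal M/E}$) bounded away from the origin and from infinity. On that shell $\Phi(r,\rho)=(\widehat\phi(r)+\widehat\phi(\rho))^{2}\ge m^{2}$ with $m=\inf_{a\le r\le 2a+b}\widehat\phi(r)>0$ (strict positivity and continuity of $\widehat\phi$ on $(0,\infty)$), so the gain operator must spread mass over a set of positive measure. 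I would then run a trichotomy according to the shape of $f^K(\tau)$: if $f^K(\tau)$ is already non-concentrated in one of two explicit senses, the concavity estimate $h(u)\ge 2\log 2\,\min\{u,1\}$ together with the mass/energy/$\|\cdot\|_{L^1_4}$ control bounds $S(f^K(\tau))$ — hence $S(f^K(t_0))$ by monotonicity — directly below by the first or second term of $S_{*}(t_0)$; otherwise $f^K$ remains concentrated on $[t_0/2,t_0]$, and then, using $\Gamma(a,b)\ge c_0(\sqrt a-\sqrt b)^2$, $\Pi(f)\ge 1$ and the lower bound $\ge m^2$ of the kernel, one shows $D_K(f^K(\tau))$ is bounded below by a multiple of $\min\{4m^2,(4\pi)^2a^2\}$ times an explicit function of $N,E,\mathcal M$, whose integral over $[t_0/2,t_0]$ is the third term of $S_{*}(t_0)$. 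Taking the minimum of the three contributions yields $S(f^K(t_0))\ge S_{*}(t_0)$ uniformly in $K$. This trichotomy — above all, turning the ``still concentrated'' alternative into a quantitative dissipation lower bound — is the main obstacle; the only substantive departure from \cite{Cai-Lu} is that the coercive role on the bounded velocity shell is now played by $m>0$ (from (\ref{3.10}) one only has $\widehat\phi(r)\ge a_0r^{-\beta}$ for $r\ge R$, which instead feeds Proposition~\ref{Mproduct} and thereby the localisation).

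For part~(II), given $F_0$ with mass $N$ and energy $E$ I would choose isotropic $f^K_0\in L^1_2(\bR)$ with $\int_{\bR}(1,|{\bf v}|^2/2)f^K_0=4\pi\sqrt2\,(N,E)$ whose induced measures converge weakly to $F_0$, apply part~(I) of Propositions~\ref{Mproduct} and \ref{prop3.2} to obtain approximate solutions $f^K$ satisfying simultaneously $\sup_K\|f^K(t)\|_{L^1_s}\le C_s(1+1/t)^{s-2}$ and $S(f^K(t))\ge S_{*}(t_0)$ for $t\ge t_0$, and then invoke Theorem~\ref{weak stability} to extract a subsequence with $f^{K_j}(\cdot,t)\to F_t$ weakly for every $t\ge0$, where $F_t$ is a conservative measure-valued isotropic solution with datum $F_0$ (using the equivalence of solutions of the Appendix of \cite{Cai-Lu}). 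The moment bound (\ref{Mprod}) for $F_t$ follows from weak lower semicontinuity of the $(1+x)^{p}$-moments (test against $(x\wedge M)^{p}$ and let $M\to\infty$) along the subsequence, as in Proposition~\ref{Mproduct}(II). For the entropy, energy conservation gives a uniform bound on $\|f^{K_j}(\cdot,t)\|_{L^1_2}$, so $\{f^{K_j}(\cdot,t)\}_j$ is admissible in the supremum defining $S(F_t)$ in (\ref{ent1}); hence $S(F_t)\ge\limsup_{j}S(f^{K_j}(\cdot,t))\ge S_{*}(t_0)$, which is (\ref{entropy3.6}) since $t_0>0$ is arbitrary, and the single $F_t$ so produced also obeys (\ref{Mprod}) because both limits are taken along the one subsequence.
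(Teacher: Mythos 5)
Your proposal follows essentially the same route as the paper: monotonicity of $t\mapsto S(f^K(t))$ from the entropy identity, the dichotomy ``either $S(f^K(t_0))$ already exceeds the first two terms of $S_*(t_0)$, or (by monotonicity) concentration persists on $[t_0/2,t_0]$ and the dissipation restricted to the region $a/2\le|{\bf v}|\le a$, $2a\le|{\bf v}'|,|{\bf v}_*'|\le b$ --- where $B_K\ge \frac{|({\bf v}-{\bf v}_*)\cdot\omega|}{(4\pi)^2}\min\{4m^2,(4\pi)^2a^2\}$ --- integrates over $[t_0/2,t_0]$ to the third term,'' followed by Theorem \ref{weak stability} and lower semicontinuity of moments and of the entropy functional for part (II). One local correction to the concentrated case: you should not discard $\Pi(f)$ via $\Pi(f)\ge 1$; the paper (following Proposition 3.4 of \cite{Cai-Lu}) keeps $\Pi(f)\ge f'f_*'$ together with the pointwise bounds $f\le 1/3$, $f',f_*'\ge 9$ defining ${\cal V}_t$, because the $({\bf v}',{\bf v}_*')$-integral is then bounded below by a power of the localized energy, whereas the Lebesgue measure of $\{f\ge 9\}$ --- which is all that survives if one uses $\Pi(f)\ge1$ and a constant lower bound on $\Gamma$ --- admits no lower bound uniform in $K$.
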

\noindent\begin{proof}
Part {\rm (I)}: By Proposition \ref{Mproduct}, $f^K$ is unique and satisfies moment production (\ref{mprod}).
	So we only need to prove the positive lower bound of entropy. As the same in Proposition (\ref{Mproduct}), we omit the superscript $K$ in $f^K$. Since $t\mapsto S(f(t))$ is non-decreasing, it is sufficient to
	prove $S(f(t_0))\ge S_{*}(t_0)$.  To do this we may assume that
	\be S(f(t_0))\le \min\Big\{\fr{7\pi a^3}{24},\,\fr{4\pi^2 E^2}{5C(1+2/t_0)^2}\Big\}.\lb{entropy3.8} \ee
	Let
	\beas&& {\cal V}_t
	=\Big\{({\bf v},{\bf v}_*, \og)\in {\bRRS}\,\,\Big|\,\, a/2\le |{\bf v}|\le a, 2a\le
	|{\bf v}'|\le b, 2a\le |{\bf v}_*'|\le b,\\
	&& f(|{\bf v}|^2/2,t)\le 1/3,
	f(|{\bf v}'|^2/2,t)\ge 9,  f(|{\bf v}_*'|^2/2,t)\ge 9
	\Big\},\quad t\ge t_0/2.
	\eeas
	Then for all $({\bf v},{\bf v}_*, \og)\in {\cal V}_t$ we have
	$a\le |{\bf v}-{\bf v}'|\le 2a+b,a\le |{\bf v}-{\bf v}_*'|\le 2a+b$
	and so
$$ B_K({\bf {\bf v-v}_*},\og)
	\ge \fr{|({\bf v}-{\bf v}_*)\cdot\og|}{(4\pi)^2}\min\Big\{
	4m^2,\, (4\pi)^2 a^2\Big\}.$$
	Using the same method in Proposition 3.4 of \cite{Cai-Lu}, we obtain for $t\ge t_0/2$
	\beas&&
	D_K(f(t))\ge \fr{1}{4}
	\int_{{\cal V}_t}B_K({\bf
		{\bf v-v}_*},\og)\Pi(f)
	\Gm\big({g}'{g}_*',\, g{g}_*\big) {\rm d}\og{\rm
		d}{\bf v_*}{\rm
		d}{\bf v}
	\\
	&&\ge\fr{\min\big\{
		4m^2,\, (4\pi)^2 a^2\big\}}{8(4\pi)^2b^2}
	\int_{{\cal V}_t}|({\bf v}-{\bf v}_*)\cdot\og||{\bf v}'|
	f(|{\bf v}'|^2/2,t)|{\bf v}_*'|f(|{\bf v}_*'|^2/2,t) {\rm d}\og{\rm
		d}{\bf v_*}{\rm
		d}{\bf v}
	\\&& \ge \fr{\min\big\{
		4m^2,\, (4\pi)^2 a^2\big\}}{8(4\pi)^2b^2}\times \fr{7\pi a^3}{12}\Big(\fr{4\pi^2 E^2}{C_4(1+2/t_0)^2}
	\Big)^2	.\eeas	
	Thus we have
	\beas&&
	D_K(f(t))
	\ge \fr{\min\big\{
		4m^2,\, (4\pi)^2 a^2\big\}}{8(4\pi)^2b^2}\fr{7\pi a^3}{12}\Big(\fr{4\pi^2 E^2}{C(1+2/t_0)^2}
	\Big)^2\\
	&&=\min\Big\{
	4m^2),\, (4\pi)^2a^2\Big\}
	\fr{7\pi^4 \sqrt{2}a^3E^5}{48 C^3(1+2/t_0)^6},\\ \\
	&&
	S(f(t_0))=
	S(f(t_0/2))
	+\int_{t_0/2}^{t_0}
	D_K(f(t)){\rm d}t\ge \int_{t_0/2}^{t_0}
	D_K(f(t)){\rm d}t\\
	&&\ge \min\Big\{
	4m^2,\, (4\pi)^2a^2\Big\}
	\fr{7\pi^4 \sqrt{2}a^3E^5}{48 C^3(1+2/t_0)^6}\fr{t_0}{2}
	\ge S_{*}(t_0).\eeas
	This proves (\ref{entropy3.5}).
	
	Part {\rm (II)}: Since by part (I) we know $f^K$ satisfies the moment production (\ref{mprod}) and positive lower bound of entropy (\ref{entropy3.5}), we can use Lemma 3.2 of \cite{Cai-Lu} and Theorem \ref{weak stability} to prove the result.
\end{proof}
\par
In order to prove the weak or semi-strong convergence to equilibrium, we need to assume that the function
$\widehat{\phi}(r)$ has a lower bound function $a(r)$ which is positive, bounded and non-decreasing in $(0,\infty)$. For instance one may take $a(r)=a_0\fr{r^{\eta}}{1+r^{\eta}}$ for some constants $a_0>0, \eta\ge 1$ so that it includes many cases of balanced potentials, e.g. the case where $\widehat{\phi}(r)$ satisfies $\widehat{\phi}(r)= b_0\fr{r^{\eta}}{1+r^{\eta}}.$
In other words, the following theorem tells us that the long-time weak convergence to equilibrium
still holds for many cases of balanced potentials.
\vskip2mm
\noindent{\bf Proof of Theorem \ref{theorem1.12}.} Denote $\sup\limits_{r\ge 0}\wh\phi(r)=b_1$. Let
	$$B_{\min}({\bf v-v}_*,\og)=\fr{1}{(4\pi)^2}\cos^3(\theta)\sin^3(\theta)(|{\bf v-v}_*|\wedge 1)^3a^{2}(\fr{1}{\sqrt{2}}|{\bf v-v}_*|).$$
	Recalling definition of $B_K({\bf v},-{\bf v}_*,\og)$ (see (\ref{3.1})) and using the inequality
	$\max\{|{\bf v-v}'|,|{\bf v-v}_*'|\}\ge \fr{1}{\sqrt{2}}|{\bf v-v}_*| $, we have
	for all $K\ge b_1^2$ that
	$$B_K({\bf v}-{\bf v}_*,\og)
	\ge B_{\min}({\bf v-v}_*,\og)$$
	Thus we can choose the same approxiamation solution $f^K$ with $K\ge b_1^2$ in Theorem 1 of \cite{Lu2005} (with $\underline{b}(\cos(\theta))
	= \fr{b_1^2}{(4\pi)^2}, \Psi(r)=a^{2}(\fr{1}{\sqrt{2}}r)/b_1^2$). Since we have proved moment production and positive lower bound of entropy, we can show $f^K$ ($K\ge b_1^2$) satisfies the same resutls in Theorem 1 of \cite{Lu2005}. Using Theorem \ref{weak stability} and  Lemma 2.1 of \cite{Cai-Lu} we get the result.
$\hfill\Box$
 	\begin{center}\section { Regularity and Stability}\end{center}
 	In this section, we use Theorem \ref{theorem1.3}(the most important case is $p=\fr{1}{2}$.) and some results of  \cite{Lu2014} to get regularity and stability.\par
 First  we define the working space
 	${\cal B}_{p,1}({\mathbb R}_{\ge 0})$ as
 	$$
 	{\cal B}_{p,1}({\mathbb R}_{\ge 0})=
 	\{ F\in {\cal B}_{1}({\mathbb R}_{\ge 0})\,|\,\,
 	M_{p}(|F|)<\infty\},\quad {\cal B}^{+}_{p,1}({\mathbb R}_{\ge 0})={\cal B}_{p,1}({\mathbb R}_{\ge 0})\cap {\cal B}^{+}({\mathbb R}_{\ge 0}).$$
 	It is easily seen that
 	$$p<q<0\quad \Longrightarrow\quad {\cal B}_{p,1}({\mathbb R}_{\ge 0})
 	\subset {\cal B}_{q,1}({\mathbb R}_{\ge 0}),\quad
 	{\cal B}^{+}_{p,1}({\mathbb R}_{\ge 0})\subset
 	{\cal B}^{+}_{q,1}({\mathbb R}_{\ge 0}).$$
 	Let us define
 	$$M_{p,q}(|F|)=M_{p}(|F|)+M_{q}(|F|),\quad
 	-\infty<p,q<\infty.$$
 	And as usual the notations $ F\otimes G, F\otimes G\otimes H$
 	stand for the product measures of $F,G,H$. As the same in \cite{Lu2014}, we introduce the following lemma.
 \vskip2mm
 \begin{lemma} \lb{lemma 2-1} Let the collision kernel $B({\bf {\bf v-v}_*},\omega)$ be given by
 	(\ref{kernel}),(\ref{kernel2}),(\ref{Phi}). Assume $|\widehat{\phi}(r)| \le b_0$ on  $[0,\infty)$, we have
 	\par
 	{\rm (a)}
Let $F,G, H\in {\cal B}_{-1/3,1}({\mathbb R}_{\ge 0})$, $k\in [0,1]$.
Then
$$
\int_{{\mathbb R}_{\ge 0}^3}W{\rm d}(|F|\otimes |G|\otimes |H|)
\le 4b_0^2M_{-1/3}(|F|)M_{-1/3}(|G|)M_{-1/3}(|H|),
$$
$$
\int_{{\mathbb R}_{\ge 0}^3}(1+y^k+z^k)W{\rm d}(|F|\otimes |G|\otimes |H|)
\le4b_0^2 M_{-1/3}(|F|)M_{-1/3,k-1/3}(|G|)M_{-1/3, k-1/3}(|H|).
$$
Furthermore, if  $F,G, H\in {\cal B}_{-1/2,1}({\mathbb R}_{\ge 0})$, then
\be
\int_{{\mathbb R}_{\ge 0}^3}(1+y^k+z^k)W{\rm d}(|F|\otimes |G|\otimes |H|)
\le a(F,G,H)\min\{\|F\|_k, \|G\|_k,\|H\|_k\}\lb{Wi}\ee
where
\be a(F,G,H):=4b_0^2 [ M_{-1/2, 1/2}(|F|)+
M_{-1/2,1/2}(|G|)+M_{-1/2,1/2}(|H|)]^2.
\lb{a-1}\ee

{\rm (b)}  Let  $\vp$ be any
Borel function on ${\mathbb R}_{\ge 0}$ satisfying
$\sup\limits_{x\ge 0}|\vp(x)|(1+x)^{-k}\le 1$ with $k\in [0, 1]$.
Then for all $F,G\in {\cal B}_{k+1/2}({\mathbb R}_{\ge 0})$,
\be \int_{{\mathbb R}_{\ge 0}^2}|{\cal J}^{\pm}[\vp]|{\rm d}(|F|\otimes |G|)\le 4b_0^2 \|F\|_{k+1/2}\|G\|_{k+1/2}, \lb{Jpm2}\ee
\be \int_{{\mathbb R}_{\ge 0}^3}|{\cal K}^{\pm}[\vp]|{\rm d}(|F|\otimes |G|\otimes |H|) \le 8b_0^2M_{-1/3}(|F|)M_{-1/3, k-1/3}(|G|)M_{-1/3,k-1/3}(|H|).\lb{Kpm1}\ee
Furthermore, if  $F,G, H\in {\cal B}_{-1/2,1}({\mathbb R}_{\ge 0})$, then
\be \int_{{\mathbb R}_{\ge 0}^3}|{\cal K}^{\pm}[\vp]|{\rm d}(|F|\otimes |G|\otimes |H|) \le 2a(F,G,H)\min\{\|F\|_k, \|G\|_k,\|H\|_k\}. \lb{Kpm2}\ee
\end{lemma}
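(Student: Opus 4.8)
The plan is to reduce the entire lemma, via a single uniform bound on $\Phi$, to the corresponding hard--sphere estimates already available in \cite{Lu2014}. Since $|\wh{\phi}(r)|\le b_0$ on $[0,\infty)$, (\ref{kernel2}) gives $\Phi(r,\rho)=(\wh{\phi}(r)+\wh{\phi}(\rho))^2\le 4b_0^2$ for all $r,\rho\ge 0$. Substituting this into the defining formulas (\ref{W1})--(\ref{W2}) for $W$ and using the identity (\ref{1.difference}) for the length of the $s$--interval, one obtains the pointwise domination
$$W(x,y,z)\le 4b_0^2\,W_H(x,y,z),\qquad W_H(x,y,z)=\frac{\min\{\sqrt x,\sqrt y,\sqrt z,\sqrt{x_*}\}}{\sqrt{xyz}}$$
(with the obvious reading of $W_H$ in the degenerate cases of (\ref{W2})). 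After this every inequality in the statement is of the form ``$\le 4b_0^2\times$(hard--sphere quantity)'', and one may either cite the matching estimate in \cite{Lu2014} directly, or reprove it from two elementary pointwise bounds: $W_H(x,y,z)\le (xyz)^{-1/3}$ (AM--GM on $\min\{\sqrt x,\sqrt y,\sqrt z\}\le (xyz)^{1/6}$), and $W_H(x,y,z)\le\min\{(xy)^{-1/2},(yz)^{-1/2},(xz)^{-1/2}\}$ (keep only one of $\sqrt x,\sqrt y,\sqrt z$ in the numerator).

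For part (a), the bound $W_H\le (xyz)^{-1/3}$ with Fubini immediately yields $\int_{\mR_{\ge 0}^3}W\,{\rm d}(|F|\otimes |G|\otimes |H|)\le 4b_0^2 M_{-1/3}(|F|)M_{-1/3}(|G|)M_{-1/3}(|H|)$, and for the weighted version one expands $1+y^k+z^k$ and uses $M_{-1/3}(|G|)M_{-1/3}(|H|)+M_{k-1/3}(|G|)M_{-1/3}(|H|)+M_{-1/3}(|G|)M_{k-1/3}(|H|)\le M_{-1/3,k-1/3}(|G|)M_{-1/3,k-1/3}(|H|)$. For (\ref{Wi}) one uses instead the bound that leaves the ``minimising'' variable un-weighted: to produce $\|H\|_k$ take $W_H\le (xy)^{-1/2}$, so $(1+y^k+z^k)W_H\le \tfrac{1+y^k}{\sqrt{xy}}+\tfrac{z^k}{\sqrt{xy}}$; integrating and bounding $M_{k-1/2}(|G|)\le M_{-1/2,1/2}(|G|)$ (valid since $k-1/2\in[-1/2,1/2]$ gives $x^{k-1/2}\le x^{-1/2}+x^{1/2}$) and $M_k(|H|),N(|H|)\le\|H\|_k$, one arrives at $12 b_0^2 M_{-1/2,1/2}(|F|)M_{-1/2,1/2}(|G|)\|H\|_k$, which is $\le a(F,G,H)\|H\|_k$ because $3AB\le (A+B+C)^2$ with $A,B,C$ the three quantities $M_{-1/2,1/2}(|\cdot|)$ entering (\ref{a-1}). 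Using $W_H\le (yz)^{-1/2}$ and $W_H\le (xz)^{-1/2}$ similarly produces the $\|F\|_k$ and $\|G\|_k$ versions, and taking the minimum finishes (\ref{Wi}).

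For part (b), ${\cal K}^{\pm}$ and ${\cal J}^{\pm}$ are the gain/loss parts of ${\cal K},{\cal J}$, i.e. ${\cal K}^{+}[\varphi]=W(x,y,z)(\varphi(x)+\varphi(x_*))$, ${\cal K}^{-}[\varphi]=W(x,y,z)(\varphi(y)+\varphi(z))$, and ${\cal J}^{\pm}[\varphi](y,z)=\tfrac12\int_0^{y+z}{\cal K}^{\pm}[\varphi](x,y,z)\sqrt x\,{\rm d}x$. From $|\varphi(u)|\le (1+u)^k\le 1+u^k$, subadditivity of $t\mapsto t^k$, and the support property $\{W>0\}\subset\{x\le y+z\}$ (so $x^k,x_*^k\le (y+z)^k\le y^k+z^k$), one gets the pointwise bound $|{\cal K}^{\pm}[\varphi]|\le 2W(x,y,z)(1+y^k+z^k)$, which turns (\ref{Kpm1}) and (\ref{Kpm2}) into the weighted bounds of part (a) --- hence the factors $8b_0^2=2\cdot 4b_0^2$ and $2a(F,G,H)$. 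For (\ref{Jpm2}) one uses the elementary identity $W_H(x,y,z)\sqrt x=\min\{\sqrt x,\sqrt y,\sqrt z,\sqrt{x_*}\}/\sqrt{yz}$ and a direct three--region computation to get $\int_0^{y+z}W_H(x,y,z)\sqrt x\,{\rm d}x\le \sqrt{\max\{y,z\}}+\tfrac13\sqrt{\min\{y,z\}}\le \sqrt y+\sqrt z$, together with $|\varphi(x)|+|\varphi(x_*)|\le 2(1+y)^k(1+z)^k$ on $[0,y+z]$ and $|\varphi(y)|+|\varphi(z)|\le (1+y)^k+(1+z)^k$; expanding and absorbing each monomial into $(1+y)^{k+1/2}(1+z)^{k+1/2}$ gives (\ref{Jpm2}) after routine bookkeeping of the constant.

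Since these are essentially the hard--sphere estimates of \cite{Lu2014}, the only genuinely new ingredient is the uniform bound $\Phi\le 4b_0^2$, and I do not expect a conceptual obstacle. The one place needing care is the non-symmetric third variable $x_*=(y+z-x)_+$: the support property $\{W>0\}\subset\{x\le y+z\}$ must be invoked wherever the weight in $x$ would otherwise be uncontrolled, and the ``$\min$''--extraction of a single factor $\min\{\|F\|_k,\|G\|_k,\|H\|_k\}$ in (\ref{Wi}) and (\ref{Kpm2}) relies on choosing the right one of the three bounds $(xy)^{-1/2},(yz)^{-1/2},(xz)^{-1/2}$ for $W_H$.
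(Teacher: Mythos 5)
Your proposal is correct and matches the paper's proof, which is exactly the one-line reduction you identify: since $|\widehat{\phi}|\le b_0$ gives $\Phi\le 4b_0^2$ and hence $W\le 4b_0^2W_H$, the lemma is an immediate consequence of Lemma 2.1 of \cite{Lu2014}. Your additional self-contained derivation of the hard-sphere estimates (via $W_H\le (xyz)^{-1/3}$, $W_H\le\min\{(xy)^{-1/2},(yz)^{-1/2},(xz)^{-1/2}\}$, and $\int_0^{y+z}W_H\sqrt{x}\,{\rm d}x\le\sqrt{\max\{y,z\}}+\tfrac13\sqrt{\min\{y,z\}}$) is accurate but goes beyond what the paper records.
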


\noindent\begin{proof}This is an immediate consequence of Lemma 2.1 in \cite{Lu2014} and the fact that $W(x,y,z)\le 4b^2_0W_H(x,y,z)$.
\end{proof}
\vskip2mm

By Lemma \ref{lemma 2-1},as the same in \cite{Lu2014}, we can define
Borel measures ${\cal Q}_2^{\pm}(F,G)\in {\cal B}_k({\mathbb R}_{\ge 0})$ for $F,G\in {\cal B}_{k+1/2}({\mathbb R}_{\ge 0})\,(k\in[0,1])$ and ${\cal Q}_3^{\pm}(F,G,H)\in
{\cal B}_1({\mathbb R}_{\ge 0})$ for $F,G, H\in {\cal B}_{-1/3, 1}({\mathbb R}_{\ge 0})$
through Riesz representation theorem by
\bes&& \int_{{\mathbb R}_{\ge 0}}\vp(x){\rm d}
{\cal Q}_2^{\pm}(F,G)(x)
=\int_{{\mathbb R}_{\ge 0}^2}{\cal J}^{\pm}[\vp]{\rm d}(F\otimes G),\lb{Q2}\\
&& \int_{{\mathbb R}_{\ge 0}}\vp(x){\rm d}
{\cal Q}_3^{\pm}(F,G, H)(x)
=\int_{{\mathbb R}_{\ge 0}^3}{\cal K}^{\pm}[\vp]{\rm d}(F\otimes G\otimes H)\dnumber\lb{Q3}
\ees
for all
$\vp\in C_b({\mathbb R}_{\ge 0})$.
It is obvious that $(F,G)\mapsto {\cal Q}_2^{\pm}(F,G)$ and  $(F,G, H)\mapsto {\cal Q}_3^{\pm}(F,G, H)$
are bounded bilinear and trilinear operators from  $[{\cal B}_{k+1/2}({\mathbb R}_{\ge 0})]^2
$ to
${\cal B}_{k}({\mathbb R}_{\ge 0})$ and from $
[{\cal B}_{-1/3, 1}({\mathbb R}_{\ge 0})]^3$ to ${\cal B}_{1}({\mathbb R}_{\ge 0})$
respectively ($k\in[0,1]$)  and
\bes&& \|{\cal Q}_2^{\pm}(F,G)\|_k
\le  4b_0^2\|F\|_{k+1/2}\|G\|_{k+1/2}, \lb{Q2single}\\
&&\|{\cal Q}_3^{\pm}(F,G, H)\|_0\le
8b_0^2M_{-1/3}(|F|)M_{-1/3}(|G|)M_{-1/3}(|H|), \dnumber\lb{Q3single0}
\\
&&\|{\cal Q}_3^{\pm}(F,G, H)\|_k\le
8b_0^2M_{-1/3}(|F|)M_{-1/3, k-1/3}(|G|)M_{-1/3,k-1/3}(|H|), \dnumber\lb{Q3single00}
\\
&&
\|{\cal Q}_3^{\pm}(F,G, H)\|_k\le  2a(F,G,H)
\min\{\|F\|_k, \|G\|_k,\|H\|_k\}.\dnumber\lb{Q3single}\ees
Here in the third inequality (\ref{Q3single}) we assume further that
$F,G, H\in {\cal B}_{-1/2,1}({\mathbb R}_{\ge 0})$ so that
$a(F,G,H)<\infty$.

In connecting with the equation Eq.(\ref{weak}) we define
$$ {\cal Q}_2^{\pm}(F)
={\cal Q}_2^{\pm}(F,F),\qquad{\cal Q}_2(F)={\cal Q}_2^{+}(F)-{\cal Q}_2^{-}(F),
$$
$${\cal Q}_3^{\pm}(F)={\cal Q}_3^{\pm}(F, F, F),\quad {\cal Q}_3(F)=
{\cal Q}_3^{+}(F)-{\cal Q}_3^{-}(F),$$
$${\cal Q}(F)={\cal Q}_2(F)+{\cal Q}_3(F).$$
We then deduce from
$$F\otimes F-G\otimes G=\fr{1}{2}(F-G)\otimes(F+G)+\fr{1}{2}
(F+G)\otimes(F-G),$$
$${\cal Q}_2^{\pm}(F)-{\cal Q}_2^{\pm}(G)
=\fr{1}{2}{\cal Q}_2^{\pm}(F-G,F+G)
+\fr{1}{2}{\cal Q}_2^{\pm}(F+G,F-G),
$$
and (\ref{Q2single}) that for all $F,G\in {\cal B}_{k+1/2}({\mathbb R}_{\ge 0})$ (with $k\in [0,1]$)
\be \|{\cal Q}_2^{\pm}(F)-{\cal Q}_2^{\pm}(G)\|_k\le 4b_0^2 \|F+G\|_{k+1/2}\|F-G\|_{k+1/2}.
\lb{Q2-1}\ee
Similarly we deduce from
$$F\otimes F\otimes F-G\otimes G\otimes G
=(F-G)\otimes F\otimes F +G\otimes(F-G)\otimes F
+G\otimes G\otimes (F-G),$$
$$\|{\cal Q}_3^{\pm}(F)-{\cal Q}_3^{\pm}(G)\|_k
\le \|{\cal Q}_3^{\pm}(F-G, F,F)\|_k+\|{\cal Q}_3^{\pm}(G, F-G,F)\|_k+\|
{\cal Q}_3^{\pm}(G, G,F-G)\|_k$$
and (\ref{Q3single00}), (\ref{Q3single}) that
\bes && \|{\cal Q}_3^{\pm}(F)-{\cal Q}_3^{\pm}(G)\|_0\le
8b_0^2[M_{-1/3}(|F|)+M_{-1/3}(|G|)]^2M_{-1/3}(|F-G|)
,\lb{Q3-0}\\
&&
\|{\cal Q}_3^{\pm}(F)-{\cal Q}_3^{\pm}(G)\|_k\le
 b(F,G)\|F-G\|_k,\quad k\in [0, 1]\dnumber \lb{Q3-1}\ees
where for the inequality (\ref{Q3-1}) we assume that  $F,G\in {\cal B}_{-1/2,1}^{+}({\mathbb R}_{\ge 0})$ so that
\be b(F,G):=144b_0^2[M_{-1/2, 1/2}(|F|)
+M_{-1/2,1/2}(|G|)]^2<\infty.\lb{b-1}\ee
In order to prove Theorem \ref{theorem1.10} and Theorem \ref{theorem1.11}, we shall introduce the concept of strong solutions.
\begin{definition}\label{definition 2-1}
Let $F_t$ be a distributional solution of Eq.(\ref{Equation1})
on $[0,\infty)$. Let $0<T_{\infty}\le\infty$.
We say that $F_t$ is a strong solution of Eq.(\ref{Equation1}) on $[0, T_{\infty})$
if it satisfies the following {\rm (i)}-{\rm(iii)}:

{\rm(i)}\, $t\mapsto F_t$ belongs to $
C^1([0,T_{\infty});{\mathcal B}_{0}({\mathbb R}_{\ge 0}))$,

{\rm (ii)}\, $t\mapsto {\cal Q}_2^{\pm}(F_t), t\mapsto {\cal Q}_3^{\pm}(F_t)$ belong to $
C([0,T_{\infty});{\mathcal B}_{0}({\mathbb R}_{\ge 0}))$,  and

{\rm(iii)}
\be \fr{{\rm d}}{{\rm d}t}F_t={\mathcal Q}(F_t)\quad in\quad ({\cal B}_{0}({\mathbb R}_{\ge 0}), \|\cdot\|_0 )\qquad \forall\,t\in[0, T_{\infty}).\lb{strong-int}\ee

Besides, if $F_t$ also conserves the energy on $[0,T_{\infty})$, then $F_t$ is also called a conservative strong solution of Eq.(\ref{Equation1}) on $[0, T_{\infty})$.

Strong solutions can be also defined on a finite closed time-interval by replacing
$[0,T_{\infty})$ with $[0, T]$ for $0<T<\infty$.
\end{definition}

\begin{remark}\lb{remark 2-1}

{\rm   Under the condition (ii), the conditions {\rm (i),(iii)} are equivalent to the
integral equation:
\be F_t=F_0+\int_{0}^t{\cal Q}(F_\tau){\rm d}\tau\qquad \forall\,t\in[0, T_{\infty})\ee
where the integral is taken as the Riemann integral defined with the norm $\|\cdot\|_0$. This then implies that, under the condition {\rm (ii)}, the integral equation (\ref{strong-int}) is equivalent to its  dual form:
\be
\int_{{\mathbb R}_{\ge 0}}\psi{\rm d}F_t=\int_{{\mathbb R}_{\ge 0}}\psi{\rm d}F_0
+\int_{0}^{t}{\rm d}\tau \int_{{\mathbb R}_{\ge 0}}\psi{\rm d}{\cal Q}(F_\tau)\qquad
\forall\, \psi\in L^{\infty}({\mathbb R}_{\ge 0})
\lb{bd-func}\ee
for all $t\in[0, T_{\infty})$. }
\end{remark}

\begin{proposition}\label{proposition 4-4}
	Suppose $B({\bf {\bf v-v}_*},\omega)$ satisfy  Assumption \ref{assp} with $\eta\ge \fr{3}{2}$. Let $F_t$ be a distributional solution of Eq.(\ref{Equation1})
	on $[0,\infty)$ with the initial datum $F_0$ satisfying
	$M_{-1/2}(F_0)<\infty$.
	Then $F_t$ is a strong solution of Eq.(\ref{Equation1}) on $[0, \infty)$.
\end{proposition}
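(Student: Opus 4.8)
The plan is to bootstrap from the negative-moment propagation of Theorem \ref{theorem1.3} through the bilinear/trilinear estimates of Lemma \ref{lemma 2-1} and the Lipschitz-type bounds (\ref{Q2single})--(\ref{Q3-1}) to the integral formulation of Remark \ref{remark 2-1}. First I would record the a priori moment regularity: since $\eta\ge\fr32$, Theorem \ref{theorem1.3} with $p=\fr12$ applies to the (conservative) solution $F_t$ and gives $M_{-1/2}(F_t)<\infty$ with the explicit bound (\ref{nmoment}); combined with $N(F_t)=N$, $E(F_t)=E$ and the elementary interpolations $M_{1/2}(F)\le(NE)^{1/2}$, $M_{-1/3}(|F|)\le M_{-1/2}(|F|)^{2/3}N(F)^{1/3}$, this yields $\Lambda_T:=\sup_{t\in[0,T]}(M_{-1/2,1/2}(F_t)+\|F_t\|_1)<\infty$ for every $0<T<\infty$, so $\{F_t\}_{t\in[0,T]}$ stays in a bounded subset of ${\cal B}^{+}_{-1/2,1}({\mathbb R}_{\ge 0})$. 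By (\ref{Q2single}) and (\ref{Q3single0}), the measure ${\cal Q}(F_t)={\cal Q}_2(F_t)+{\cal Q}_3(F_t)$ is then a well-defined element of ${\cal B}_0({\mathbb R}_{\ge 0})$ with $\sup_{t\in[0,T]}\|{\cal Q}(F_t)\|_0<\infty$.

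Next I would promote the weak equation (\ref{Equation3}) --- equivalently $\int_{{\mathbb R}_{\ge 0}^2}{\cal J}[\varphi]\,{\rm d}^2F_\tau+\int_{{\mathbb R}_{\ge 0}^3}{\cal K}[\varphi]\,{\rm d}^3F_\tau=\int_{{\mathbb R}_{\ge 0}}\varphi\,{\rm d}{\cal Q}(F_\tau)$ by (\ref{Q2}), (\ref{Q3}) --- to $\|\cdot\|_0$-Lipschitz continuity of $t\mapsto F_t$. Testing against $\varphi\in C_c^{\infty}({\mathbb R}_{\ge 0})\subset C^{1,1}_b({\mathbb R}_{\ge 0})$ gives $\int\varphi\,{\rm d}(F_t-F_s)=\int_s^t\!\int\varphi\,{\rm d}{\cal Q}(F_\tau)\,{\rm d}\tau$, hence $|\int\varphi\,{\rm d}(F_t-F_s)|\le\|\varphi\|_\infty\int_s^t\|{\cal Q}(F_\tau)\|_0\,{\rm d}\tau$; since every finite signed Borel measure $\mu$ on ${\mathbb R}_{\ge 0}$ satisfies $\|\mu\|_0=\sup\{\,|\int\varphi\,{\rm d}\mu|:\varphi\in C_c^{\infty},\,\|\varphi\|_\infty\le1\,\}$, this gives $\|F_t-F_s\|_0\le C(\Lambda_T)|t-s|$ on $[0,T]$. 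The Cauchy--Schwarz interpolation $\|F_t-F_s\|_{1/2}\le\|F_t-F_s\|_0^{1/2}\|F_t-F_s\|_1^{1/2}$ then forces $\|F_t-F_s\|_{1/2}\to0$, and splitting $M_{-1/3}(|F_t-F_s|)$ at a small threshold $\delta$ --- using $\int_0^\delta x^{-1/3}\,{\rm d}|F_t-F_s|\le\delta^{1/6}\int_0^\delta x^{-1/2}\,{\rm d}(F_t+F_s)\le 2\delta^{1/6}\Lambda_T$ (since $|F_t-F_s|\le F_t+F_s$) together with $\int_\delta^\infty x^{-1/3}\,{\rm d}|F_t-F_s|\le\delta^{-1/3}\|F_t-F_s\|_0$, then optimizing in $\delta$ --- forces $M_{-1/3}(|F_t-F_s|)\to0$, all locally uniformly as $|t-s|\to0$.

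With these continuity moduli in hand, (\ref{Q2-1}) (with $k=0$) and (\ref{Q3-0}) give $\|{\cal Q}_2^{\pm}(F_t)-{\cal Q}_2^{\pm}(F_s)\|_0\le 4b_0^2\|F_t+F_s\|_{1/2}\|F_t-F_s\|_{1/2}$ and $\|{\cal Q}_3^{\pm}(F_t)-{\cal Q}_3^{\pm}(F_s)\|_0\le 8b_0^2[M_{-1/3}(F_t)+M_{-1/3}(F_s)]^2M_{-1/3}(|F_t-F_s|)$, both of which tend to $0$ as $t\to s$; hence $t\mapsto{\cal Q}_2^{\pm}(F_t)$ and $t\mapsto{\cal Q}_3^{\pm}(F_t)$ belong to $C([0,\infty);{\cal B}_0({\mathbb R}_{\ge 0}))$, which is condition (ii) of Definition \ref{definition 2-1}. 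Finally, since $\tau\mapsto{\cal Q}(F_\tau)$ is now $\|\cdot\|_0$-continuous, its Riemann integral $\int_0^t{\cal Q}(F_\tau)\,{\rm d}\tau\in{\cal B}_0$ exists, and the weak identity above together with the fact that $C_c^{\infty}$ separates finite Borel measures yields $F_t=F_0+\int_0^t{\cal Q}(F_\tau)\,{\rm d}\tau$ in $({\cal B}_0,\|\cdot\|_0)$; by Remark \ref{remark 2-1} this is equivalent to conditions (i) and (iii), so $F_t$ is a strong solution on $[0,\infty)$. \textbf{The main obstacle}, and the one place where $M_{-1/2}(F_t)<\infty$ is indispensable, is making ${\cal Q}_3(F_t)$ a genuine finite measure that depends on $t$ in a locally bounded and $\|\cdot\|_0$-continuous way: without the negative-moment control the cubic term need not even be a finite measure near the origin, and it is precisely Theorem \ref{theorem1.3} --- hence the hypothesis $\eta\ge\fr32$, needed for the case $p=\fr12$ --- that removes this difficulty.
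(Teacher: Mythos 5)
Your proposal is correct and follows essentially the same route as the paper: propagate $M_{-1/2}(F_t)$ via Theorem \ref{theorem1.3} with $p=\fr12$, bound ${\cal Q}_2^{\pm},{\cal Q}_3^{\pm}$ in ${\cal B}_0$, deduce $\|F_t-F_s\|_0\le C_T|t-s|$, interpolate to get continuity in ${\cal B}_{1/2}$, and conclude through the integral formulation of Remark \ref{remark 2-1}. The only (harmless) deviation is that you control the cubic term via (\ref{Q3-0}) and an extra splitting argument for $M_{-1/3}(|F_t-F_s|)$, whereas the paper applies (\ref{Q3-1}) with $k=0$ directly, which already reduces everything to $\|F_t-F_s\|_0\to 0$.
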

\noindent \begin{proof}Take any $T\in (0, \infty)$. Using Theorem \ref{theorem1.3} for $p=\fr{1}{2}$ we know $ \sup\limits_{t\in[0,T]}M_{-1/2}(F_t)<\infty$.Combining with the estimates (\ref{Q2single}), (\ref{Q3single}) for $k=1/2$  we have
	$\|{\cal Q}_2^{\pm}(F_t)\|_{1/2},\, \|{\cal Q}_3^{\pm}(F_t)\|_{1/2}\le C_T$ for all $ t\in[0, T]$,
	where $C_T<\infty$ depends only on
	$ \sup\limits_{t\in[0,T]}M_{-1/2}(F_t)<\infty$ and $\sup\limits_{t\in[0,T]}\|F_t\|_1$
	. From this and the integral equation (\ref{Equation3})
	which also reads
	\be \int_{{\mathbb R}_{\ge 0}}\vp{\rm
		d}(F_t-F_s)=\int_{s}^{t}{\rm d}\tau\int_{{\mathbb R}_{\ge 0}}\vp {\rm d}{\cal Q}(F_{\tau})\qquad
	\forall\,\vp\in C_b^{1,1}({\mathbb R}_{\ge 0})\lb{FFQ}\ee
	we obtain
	$\|F_t-F_s\|_0\le C_T|t-s|$ for all $t,s\in[0, T].$ Since $\|F_t-F_s\|_{1/2}\le \|F_t-F_s\|_1^{1/2}\|F_t-F_s\|_0^{1/2}$ by Cauchy-Schwarz inequality,
	it follows that  $t\mapsto F_t$ also belongs to $
	C([0,\infty);{\mathcal B}_{1/2}({\mathbb R}_{\ge 0}))$
	and thus
	we conclude from (\ref{Q2-1})-(\ref{Q3-1}) with $k=0$ that
	$t\mapsto  {\cal Q}_2^{\pm}(F_t), t\mapsto  {\cal Q}_3^{\pm}(F_t)$ hence $t\mapsto  {\cal Q}(F_t)$ all belong to
	$C([0, \infty);{\mathcal B}_{0}({\mathbb R}_{\ge 0}))$.
	Next for any $T\in (0, \infty)$, using 	$ \sup\limits_{t\in[0,T]}M_{-1/2}(F_t)<\infty$ and  smooth approximation it is easily deduced that (\ref{FFQ}) with $s=0$ and $t\in [0, T]$
	holds for all bounded Borel functions $\vp$ on ${\mathbb R}_{\ge 0}$, in particular it holds for all characteristic functions $\vp(x)={\bf 1}_{E}(x)$ of Borel sets $E\subset {\mathbb R}_{\ge 0}$. Therefore $F_t$ satisfies
	the integral equation (\ref{strong-int}) and so,
	according to the equivalent definition of strong solutions discussed in Remark \ref{remark 2-1}, $F_t$ is a strong solutions of Eq.(\ref{Equation1}) on $[0, \infty)$.
\end{proof}
\par
The proofs of Theorem \ref{theorem1.10} and Theorem \ref{theorem1.11} is essentially the same as those of Proposition 4.1 and Theorem 3.1 in \cite{Lu2014}. The only difference is that we can use Theorem \ref{theorem1.3} and Proposition \ref{proposition 4-4} to ensure the propagation of $M_{-1/2}(F_t)$ 
so as to obtain a global in time strong solution. For the sake of completeness, 
we provide complete proofs below.
\vskip2mm
\noindent{\bf Proof of Theorem \ref{theorem1.10}.} Using Theorem \ref{theorem1.3} we know $ \sup\limits_{t\in[0,T]}M_{-1/2}(F_t)<\infty$ for all $T\in [0,\infty)$.
	Recalling Proposition \ref{proposition 4-4} that $F_t$ is a strong
	distributional solution on $[0, \infty)$ and relation (\ref{Mp}) we
	have $F_t(\{0\})=0$ for all $t\in[0,\infty)$, which
	means that the origin $x=0$ has no contribution with respect to the measure $F_t$ and thus the integration domain ${\mathbb R}_{\ge 0}$ can be replaced by ${\mathbb R}_{+}={\mathbb R}_{>0}$.
	Let $$V_t(\dt)=\sup_{{\rm mes}(U)<\dt}F_t(U),\quad t\in [0, \infty) $$
	where $E\subset {\mathbb R}_{\ge 0}$ is any Borel set,
	$U$ is chosen from all open sets in ${\mathbb R}_{+}$, and ${\rm mes}(\cdot)$ denotes the Lebesgue
	measure
	on $t\in [0, \infty)$. Take any open set $U\subset {\mathbb R}_{+}$
	satisfying ${\rm mes}(U)<\dt$.
	Applying the integral equation (\ref{bd-func}) to a monotone sequence $0\le \vp_n\in C_b({\mathbb R}_{\ge 0})$ satisfying
	$$\vp_n(x) \nearrow \psi_U(x):={\bf 1}_{U}(x)\quad(n\to\infty)\quad \forall\,x\in {\mathbb R}_{+}$$
	for instance $
	\vp_n(x)=(1-\exp(-n{\rm dist}(x,U^c)))$, and then omitting negative parts we deduce from monotone convergence that
$$F_t(U)
\le F_0(U)
+\int_{0}^{t}{\rm d}\tau\int_{{\mathbb R}_{+}^2}{\cal J}^{+}
[\psi_U]{\rm d}^2F_\tau+
\int_{0}^{t}{\rm d}\tau\int_{{\mathbb R}_{+}^3}{\cal K}^{+}[\psi_U]{\rm d}^3F_\tau,
\quad t\in [0,\infty)$$
where
\beas&& {\cal J}^{+}[\vp](y,z)=\fr{1}{2}\int_{0}^{y+z}{\cal K}^{+}[\vp](x, y,z)
\sqrt{x}{\rm d}x, \\
&&
{\cal K}^{+}[\vp](x, y,z)=W(x,y,z)[\vp(x)+\vp(x_*)].\eeas
	Next we compute for all $x,y,z>0$
	\beas&& {\cal J}^{+}[\psi_U](y,z)
	\le\fr{1}{2}\int_{0}^{y+z}W(x,y,z)(1_U(x)+1_U(y+z-x)\sqrt{x}{\rm d}x \\
	&&
	\le\int_{0}^{y+z} 2b_0^2\fr{\min\{1,\max\{8x,8y,8z\}^\eta\}}{\sqrt{y}\sqrt{z}}\min\{\sqrt{x},\sqrt{y},\sqrt{z},\sqrt{x_*}\}(1_U(x)+1_U(y+z-x){\rm d}x  ,,\\
	\\
	&&
	\le\int_{0}^{y+z} 2b_0^2\fr{{8^{\eta}(y+z)^{\fr{1}{2}}}}{\sqrt{y}\sqrt{z}}\min\{\sqrt{x},\sqrt{y},\sqrt{z},\sqrt{x_*}\}(1_U(x)+1_U(y+z-x){\rm d}x \le 8^{1+\eta}b_0^2 \delta,
	\eeas

\beas&& \int_{0\le x,y,z}W(x,y,z)1_U(x){\rm d}^3F_{\tau}
	\le 2\bigg(\int_{0\le x\le y \le z}W(x,y,z)1_U(x){\rm d}^3F_{\tau}\\
	&&+\int_{0\le y \le x \le z}W(x,y,z)1_U(x){\rm d}^3F_{\tau}+\int_{0\le y \le z \le x}W(x,y,z)1_U(x){\rm d}^3F_{\tau}\bigg)
	\\
	&&\le 8b_0^2\bigg(\int_{0\le x\le y \le z}\fr{\min\{1,(8z)^\eta\}}{\sqrt{y}\sqrt{z}}1_U(x){\rm d}^3F_{\tau}
	\\
	&&
	+\int_{0\le y \le x \le z}\fr{\min\{1,(8z)^\eta\}}{\sqrt{y}\sqrt{z}}1_U(x){\rm d}^3F_{\tau}+\int_{0\le y \le z \le x}\fr{\min\{1,(8x)^\eta\}}{\sqrt{x}\sqrt{z}}1_U(x){\rm d}^3F_{\tau}\bigg)
	\\
	&&
	\le 8^{1+\eta}b_0^2\bigg(\int_{0\le x\le y \le z}\fr{1}{\sqrt{y}}1_U(x){\rm d}^3F_{\tau}+\int_{0\le y \le x \le z}\fr{1}{\sqrt{y}}1_U(x){\rm d}^3F_{\tau}+\int_{0\le y \le z \le x}\fr{1}{\sqrt{z}}1_U(x){\rm d}^3F_{\tau}\bigg)\\
	&&
	\le 3\cdot 8^{1+\eta}b_0^2NM_{-1/2}(F_t)V_{\tau}(\delta),
	\eeas
and
\beas&&
	\int_{0\le x,y,z}W(x,y,z)1_U(y+z-x){\rm d}^3F_{\tau}
	\le 2(\int_{0\le x\le y \le z}W(x,y,z)1_U(y+z-x){\rm d}^3F_{\tau}\\
	&&+\int_{0\le y \le x \le z}W(x,y,z)1_U(y+z-x){\rm d}^3F_{\tau}+\int_{0\le y \le z \le x}W(x,y,z)1_U(y+z-x){\rm d}^3F_{\tau})
	\\
	&&
	\le 8b_0^2(\int_{0\le x\le y \le z}\fr{\min\{1,(8z)^\eta\}}{\sqrt{y}\sqrt{z}}1_U(y+z-x){\rm d}^3F_{\tau}
	\\
	&&
	+\int_{0\le y \le x \le z}\fr{\min\{1,(8z)^\eta\}}{\sqrt{y}\sqrt{z}}1_U(y+z-x){\rm d}^3F_{\tau}+\int_{0\le y \le z \le x}\fr{\min\{1,(8x)^\eta\}}{\sqrt{x}\sqrt{z}}1_U(y+z-x){\rm d}^3F_{\tau})
	\\
	&&
	\le 8^{1+\eta}b_0^2(\int_{0\le x\le y \le z}\fr{1}{\sqrt{y}}1_U(y+z-x){\rm d}^3F_{\tau}\\
	&&+	\int_{0\le y \le x \le z}\fr{1}{\sqrt{y}}1_U(y+z-x){\rm d}^3F_{\tau}+\int_{0\le y \le z \le x}\fr{1}{\sqrt{z}}1_U(y+z-x){\rm d}^3F_{\tau})\\
	&&\le 3\cdot 8^{1+\eta}b_0^2NM_{-1/2}(F_t)V_{\tau}(\delta).
	\eeas
	It follows that
	$$
	F_t(U)
	\le V_0(\dt)+8^{1+\eta}b_0^2 \delta N^2 t+8^{2+\eta}b_0^2N
	\int_{0}^{t}M_{-1/2}(F_{\tau})V_{\tau}(\delta){\rm d}\tau.$$
	Taking $\sup\limits_{{\rm mes}(U)<\dt}$ leads to
	$$V_t(\dt)\le   V_0(\dt)+8^{1+\eta}b_0^2 \delta N^2 t+8^{2+\eta}b_0^2N
	\int_{0}^{t}M_{-1/2}(F_{\tau})V_{\tau}(\delta){\rm d}\tau
	,\quad t\in [0,\infty)$$
	and so, by Gronwall inequality,
	$$V_t(\dt)\le \Big( V_0(\dt)+8^{1+\eta}b_0^2 \delta N^2 t
	\Big)\exp\Big(8^{2+\eta}b_0^2N\int_{0}^{t}M_{-1/2}(F_{\tau}){\rm d}\tau\Big),\quad t\in [0,\infty).$$
	Since $F_0$ is regular implies $\lim\limits_{\dt\to 0^+}V_0(\dt)=0$ and since 
$t\mapsto M_{-1/2}(F_{t})$ is locally bounded on $[0,\infty)$ (see Theorem \ref{theorem1.3}),
it follows that
	$\lim\limits_{\dt\to 0^+}V_t(\dt)=0$ for all $t\in[0,\infty).$
This proves that
	$F_t$ is absolutely continuous with respect to the Lebesgue measure for every $t\in[0,\infty)$, and
	thus there is a unique $0\le f(\cdot,t)\in L^1({\mathbb R}_{\ge 0})$ such that
	${\rm d}F_t(x)=f(x,t)\sqrt{x}{\rm d}x.$
	That is, we have proved that $F_t$ is regular for all $t\in[0, \infty)$
	and its density $f(\cdot,t)$ belongs to $L^1({\mathbb R}_{\ge 0})$ for all  $t\in[0, \infty)$.
	
Since $\|f(t)\|_{L^1}=
	M_{-1/2}(F_t)$, it follows from $ \sup\limits_{t\in[0,T]}M_{-1/2}(F_t)<\infty$ 
and $W(x,y,z)\sqrt{y}\sqrt{z}\le 4b^2_0W_H(x,y,z)\sqrt{y}\sqrt{z}=4b^2_0\fr{\min\{\sqrt{x},\sqrt{x_*},
		\sqrt{y},\sqrt{z}\,\}}{\sqrt{x}} $ that for all $0<T<\infty$
	\bes &&
	\sup_{0\le t\le T}\int_{{\mathbb R}_{+}^3}W(x,y,z)[ f'f_*'(1+f+f_*))+
	ff_*(1+f'+f_*')]\sqrt{y}\sqrt{z}{\rm d}x{\rm d}y{\rm
		d}z\nonumber \\
	&&
	\le 16b^2_0\sup_{0\le t\le T}\Big(
	M_{1/2}(f(t))\|f(t)\|_{L^1}+\|f(t)\|_{L^1}^3\Big)<\infty\lb{Qf}\ees
	where $M_{1/2}(f(t))=\int_0^{\infty}xf(x,t){\rm d}x$. From this and that $F_t$ is a strong solution of Eq.(\ref{Equation1}) on $[0, \infty)$
	we conclude that the equation
	\beas&& \int_{{\mathbb R}_{\ge 0}}\psi(x)\Big(
	f(x,t)-f_0(x)-\int_{0}^{t}Q(f)(x,\tau){\rm d}\tau
	\Big)\sqrt{x}\,
	{\rm d}x\\
	&& =\int_{{\mathbb R}_{\ge 0}}\psi(x){\rm d}\Big(F_t
	-F_0
	-\int_{0}^{t}{\cal Q}(F_\tau){\rm d}\tau\Big)(x)=0\eeas
	holds for all $t\in[0, \infty)$ and all bounded Borel functions $\psi$ on ${\mathbb R}_{\ge 0}$. Thus
	for any $t\in [0, \infty)$, there is a null set $Z_t\subset
	{\mathbb R}_{\ge 0}$ such that
	$$f(x,t)=f_0(x)+\int_{0}^{t}Q(f)(x,\tau){\rm d}\tau
	\qquad \forall\, x\in{\mathbb R}_{\ge 0}\setminus Z_t.$$
	In order to get a common null set $Z$ independent of $t$, we consider
	$\wt{f}(\cdot,t):=|f_0+\int_{0}^{t}Q(f)(\cdot,\tau){\rm d}\tau
	|$. The advantage of $\wt{f}(\cdot,t)$ is that there is a null set $Z$ which is independent of $t$ such
	that $t\mapsto \wt{f}(x,t)$ is continuous in $t\in[0, \infty)$ for all $x\in {\mathbb R}_{+}\setminus Z$. Also, since $\wt{f}(x,t)=f(x,t)$ for all $t\in[0, \infty)$ and all  $x\in{\mathbb R}_{+}\setminus Z_t$, it follows
	from Fubini theorem  that
	$\wt{f}(\cdot,t)$ is a mild solution to Eq.(\ref{Equation1}) on
	$[0, \infty)$. Again since
	$\wt{f}(\cdot,0)=f_0$ and  $\wt{f}(x,t)=f(x,t)$ for all $t\in[0, \infty)$ and all $x\in {\mathbb R}_{+}\setminus Z_t$, it follows that
	$\wt{f}(\cdot,t)$ is also the same density of $F_t$ for $t\in[0, \infty)$.
	Thus by rewriting $\wt{f}(\cdot, t)$ as $f(\cdot,t)$  we conclude that
	the density $f(\cdot,t)$ of $F_t$ is a mild solution of Eq.(\ref{Equation1}) on
	$[0, \infty)$.
	
	Finally for any $T\in (0, \infty)$, let $C_T$ be the left hand side of (\ref{Qf}). Then
	we deduce from (\ref{Qf}) and the definition of mild solutions that
	$\|f(\tau)-f(t)\|_{L^1}\le 2C_T |s-t|$  for all $s, t\in [0, T]$. Therefore $f\in C([0, \infty); L^1({\mathbb R}_{\ge 0}))$.
$\hfill\Box$
\vskip3mm

\noindent{\bf Proof of Theorem \ref{theorem1.11}.} First according to Proposition \ref{proposition 4-4},
		$F_t, G_t$ are strong solutions on $[0, \infty)$.
		The proof is divided into three steps. First we assume that $F_t$ has the moment production (\ref{Mprod}) for all $t\in (0, \infty)$. The existence of such $F_t$ is
		assured by Proposition \ref{prop3.2}. Let us denote
		$$H_t=F_t-G_t.$$
		By
		conservation of mass we have
		$\|F_t\pm G_t\|_1\le \|F_0\|_1+\|G_0\|_1$ for all $t\ge 0$.
		So if $\|H_0\|_1\ge 1$, then
		$\|H_t\|_1\le 2\|F_0\|_1+\|H_0\|_1\le (2\|F_0\|_1+1)\|H_0\|_1$
		for all $t\ge 0$.
		Therefore to prove (\ref{H0}) we can assume $\|H_0\|_1<1$.
		
		{\bf Step 1.} Given any $s\in (0, t)$,we prove that
		\bes&&\|H_t\|_0\le  \|H_0\|_0
		+C_1(t)\int_{0}^{t}\|H_\tau\|_1{\rm d}\tau ,\lb{H1}\\
		&&
		\|H_t\|_1\le \|H_s\|_1+
		C_0\int_{s}^{t}(1+1/\tau)\|H_\tau\|_{0}{\rm d}\tau+C_1(t)
		\int_{s}^{t}\|H_\tau\|_1{\rm d}\tau.  \dnumber \lb{H2}\ees
		Here and below the constant $0<C_0<\infty$ depends only
		on $N(F_0)$ ,$E(F_0)$,$\beta,a_0,b_0$ and $R$,
and  
\beas&& C_1(t)=288b_0^2(2at+M_{-1/2}(F_0)+M_{-1/2}(G_0)+\|F_0\|_1+\|G_0\|_1+1)^2e^{2bt},\\
&& a=8^2b_0^2 \max\{N(F_0),N(G_0)\}^{2}+8^{2+\eta}b_0^2\max\{N(F_0),N(G_0)\}^3,\\
&&b=8^{3+\eta}b_0^2\max\{N(F_0),N(G_0)\}^2(1+q_1).\eeas
The inequality (\ref{H1}) follows from $H_t=H_0+\int_{0}^{t}[{\cal Q}(F_\tau)-{\cal Q}(G_\tau)]{\rm d}\tau$, Theorem \ref{theorem1.3} and the estimates (\ref{Q2-1}), (\ref{Q3-1}) for $k=0$.  To prove (\ref{H2}) we first use the identity $|H_t|=-H_t+2(H_t)_{+}$ (recall that $H_t=F_t-G_t$) and the conservation of mass and energy to write
		\be \|H_t\|_1=\|G_s\|_1-\|F_s\|_1+2\|(H_t)_{+}\|_1,\quad t\ge s.\lb{Hlimit}\ee
		Let $
		x\mapsto \kappa_t(x)\in\{0,1\}$ be the Borel function on ${\mathbb R}_{\ge 0}$
		such that
		$\kappa_t(x){\rm d}H_t(x)={\rm d}(H_t)_{+}(x)$. Since $t\mapsto {\cal Q}(F_t)-{\cal Q}(G_t)$
		belongs to
		$C([0, \infty); {\cal B}_0({\mathbb R}_{\ge 0}))$,
		applying Lemma 5.1 of \cite{LM} to the measure equation
		$
		H_t=H_s+\int_{s}^t ( {\cal Q}(F_\tau)-{\cal Q}(G_\tau)){\rm d}\tau,\, t\in [s, \infty),
		$ we have
		$$\int_{{\mathbb R}_{\ge 0}}\psi(x){\rm d}(H_t)_{+}(x)=
		\int_{{\mathbb R}_{\ge 0}}\psi(x){\rm d}H_s(x)
		+\int_{s}^t{\rm d}\tau \int_{{\mathbb R}_{\ge 0}}\psi(x)
		\kappa_\tau(x){\rm d}( {\cal Q}(F_\tau)-{\cal Q}(G_\tau))(x) $$
		for all $t\in [s, \infty)$ and all bounded Borel functions $\psi$ on
		${\mathbb R}_{\ge 0}$.
		In particular  we have
		$$
		\int_{\mathbb{R}_{\ge 0}}(1+x \wedge n) {\rm d}(H_t)_{+}(x) \le \|(H_s)_{+}\|_1+\int_{s}^{t}{\rm
			d}\tau\int_{\mathbb{R}_{\ge 0}}(1+x \wedge n)\kappa_\tau(x)
		{\rm d}( {\cal Q}(F_\tau)-{\cal Q}(G_\tau))(x).$$
		Next applying (\ref{Mprod}) with $p=3/2$ we see that the function
		$t\mapsto \|F_t\|_{3/2}\le C_{0}(1+1/t)$
		is integrable on $[s, T]$ . Using the estimate that is analogous to Lemma 3.5 of \cite{Lu2014}  and the reverse Fatou's Lemma we deduce
		\beas&& \limsup_{n\to\infty}\int_{s}^{t}{\rm d}\tau\int_{{\mathbb R}_{\ge 0}}(1+x\wedge n)\kappa_\tau(x){\rm d}({\cal Q}(F_\tau)-
		{\cal Q}(G_\tau))(x)\\
		&&  \le C_0
		\int_{s}^{t}(1+1/\tau)\|H_\tau\|_0{\rm d}\tau+C_1(t)\int_{s}^{t}\|H_\tau\|_1{\rm d}\tau.\eeas
		Letting $n\to\infty$  we conclude
		$$\|(H_t)_{+}\|_1\le \|(H_s)_{+}\|_1+ C_0\int_{s}^{t}(1+1/\tau)\|H_\tau\|_0{\rm d}\tau
		+C_1(t)\int_{s}^{t}\|H_\tau\|_1{\rm d}\tau.$$
		This together with (\ref{Hlimit}) and
		$\|G_s\|_1-\|F_s\|_1+2\|(H_s)_{+}\|_1
		=\|H_s\|_1$
		gives (\ref{H2}).

		{\bf Step 2.} We prove that  for any $R_1\ge 1$
		\be \|H_t\|_1\le 5R_1\|H_0\|_1+C_1(t)Rt
		+2\int_{x>R_1}x {\rm d}F_0(x) \lb{H3}\ee
		In fact using $|H_t|=G_t-F_t+2(H_t)_{+}$ and conservation of mass and energy we have
		\be
		\|H_t\|_1\le \|H_0\|_1+ 4R_1\|H_t\|_0
		+2\int_{x> R_1}x{\rm d}F_t(x)
		\lb{H4}\ee
		and applying (\ref{bd-func}) to the bounded function $\psi(x)= {\bf 1}_{\{x\le R_1\}}x$ we deduce
		\beas&& \int_{x> R_1}x{\rm d}F_t(x)
		=E(F_0)-\int_{{\mathbb R}_{\ge 0}}{\bf 1}_{\{x\le R_1\}}x{\rm d}F_t(x)
		\\
&& =\int_{x> R}x{\rm d}F_0(x)
		-\int_{0}^{t}{\rm d}\tau
		\int_{{\mathbb R}_{\ge 0}}{\bf 1}_{\{x\le R_1\}}x {\rm d}{\cal Q}(F_\tau)(x)\\
		&&\le \int_{x> R_1}x{\rm d}F_0(x)+
		R_1\int_{0}^{t}\|{\cal Q}(F_\tau)\|_0{\rm d}\tau
		\le \int_{x> R_1}x{\rm d}F_0(x)+C_1(t) R_1 t.
		\eeas
		This together with (\ref{H4}) and $\|H_t\|_0\le \|H_0\|_0
		+C_1(t) t$ (by (\ref{H1})) yields (\ref{H3}).

		{\bf Step 3.}
		If $t\le \|H_0\|_1$, we take
		$R_1=\fr{1}{\sqrt{\|H_0\|_1}}$ and use (\ref{H3}) to get
		$$\|H_t\|_1
		\le C_2(t)\Big(\sqrt{\|H_0\|_1}
		+\int_{x> \fr{1}{\sqrt{\|H_0\|_1}}}x{\rm d}F_0(x)
		\Big)
		\le C_2(t))\Psi_{F_0}(\|H_0\|_1)$$
		where $C_2(t)=C_1(t)+5$. Suppose now
		$\|H_0\|_1<t$ and let $\vep>0$ satisfy $\|H_0\|_1\le \vep< 1.$
		Taking $R_1=\fr{1}{\sqrt{\vep}}$ and using (\ref{H3}) we have
		\be \|H_\tau\|_1\le  C_2(\tau)\sqrt{\vep}\,+2\int_{x>\fr{1}{\sqrt{\vep}}}x {\rm d}F_0(x)
		\le C_2(\tau)\Psi_{F_0}(\vep),
		\quad \forall\, \tau \in [0, \vep].\lb{H5}\ee
		In particular this inequality holds for $\tau=\vep$. Thus
		using (\ref{H2}) for $s=\vep$ gives
		\be
		\|H_t\|_1\le C_2(\vep)\Psi_{F_0}(\vep)+C_0\int_{\vep}^{t}(1+1/\tau)\|H_\tau\|_{0}{\rm d}\tau+
		C_2(t)\int_{\vep}^{t}\|H_\tau\|_1{\rm d}\tau, \quad t\in[\vep, 1]\lb{H*}\ee
		Next, using (\ref{H1}) we know for $\|H_0\|_0\le \vep\le t\le 1$,
		\bes
		\int_{\vep}^{t}(1+1/\tau)\|H_\tau\|_{0}{\rm d}\tau
		\!\!&\le &\!\! 2\vep\log(1/\vep)
		+2C_1(1)\int_{\vep}^{t}\fr{1}{\tau}\,\int_{0}^{\tau}\|H_u\|_1{\rm d}u {\rm d}\tau
		\nonumber\\
		\!\!&\le &\!\!  2\sqrt{\vep}+2C_1(1)\int_{0}^{t} \|H_u\|_1|\log u|
		{\rm d}u,\quad t\in [\vep, 1].
		\nonumber\ees
		This together with (\ref{H*}) and (\ref{H5}) gives
		\be
		\|H_t\|_1\le 2C_2(1)\Psi_{F_0}(\vep)+2(C_2(1))^2
		\int_{0}^{t}(1+|\log \tau|)\|H_\tau\|_1{\rm d}\tau,\quad t\in[0,  1].\lb{H6}\ee
		By Gronwall inequality we then obtain
		\be \|H_t\|_1\le 2C_2(1)\Psi_{F_0}(\vep)\exp\Big(2(C_2(1))^2\int_{0}^{t}(1+|\log \tau|){\rm d}\tau\Big)
		= C_3\Psi_{F_0}(\vep),\quad t\in [0,  1]\lb{H7}\ee
		where $C_3=2C_2(1)\exp\Big(2(C_2(1))^2\int_{0}^{t}(1+|\log \tau|){\rm d}\tau\Big)$. Now if $t\le 1$, then (\ref{H0}) follows from (\ref{H7}). Suppose $t>1$, using (\ref{H7}) we know $\|H_1\|_1
		\le C_3\Psi_{F_0}(\vep)$.
		On the other hand from (\ref{H2}) with $s=1$ we have
		$\|H_t\|_1\le \|H_1\|_1+
		(C_1(t)+2C_0)\int_{1}^{t}\|H_\tau\|_1{\rm d}\tau $ for all $t\in[1,\infty]$ and
		so
		$\|H_t\|_1\le \|H_1\|_1 e^{c(t-1)}\le C_3\Psi_{F_0}(\vep)e^{	(C_1(t)+2C_0)t}$ for all $t\in[1,\infty)$ by Gronwall Lemma.
		This together with the estimate for $t\in [0,1]$ leads to
		 $$\|H_t\|_1\le C_3\Psi_{F_0}(\vep)e^{	(C_1(t)+2C_0)t},\quad t\in[0,\infty).$$
		Using the representation of $C_1(t)$, we can choose some constant $C,c$ appropriately such that
		\be \|H_t\|_1\le C\Psi_{F_0}(\vep)e^{	e^{ct}},\quad t\in[0,\infty).\lb{H9}\ee
		Where $C,c$ denpend only on $N(F_0), E(F_0),N(G_0),E(G_0)$,  $a_0$,$b_0$,$\eta$,$q_1$,$R$,$M_{-1/2}(F_0)$, $M_{-1/2}(G_0)$. Finally if $\|H_0\|_1>0$ then taking $\vep=\|H_0\|_1$ in (\ref{H9}) gives (\ref{H0}).
		If $\|H_0\|_1=0$, then in (\ref{H9}) letting
		$\vep\to 0^+$ we conclude $\|H_t\|_1=0$ for all $t\in[0, T]$ and thus (\ref{H0})
		still holds true.
		This proves (\ref{H0}) for the case where $F_t$ has the moment production (\ref{Mprod}).The general case is still true since if $F_0=G_0$, $F_t$ has the moment production, then (\ref{H0}) tells us $F_t=G_t$ for all $t\in [0,\infty)$.
		$\hfill\Box$
\vskip2mm

As did for the classical Boltzmann equation,the collision integral $Q(f)$ can be decomposed as positive and negative parts:
\bes&& Q(f)(x)=Q^{+}(f)(x)-Q^{-}(f)(x),\lb{Qdecomposition}\\
&&Q^{+}(f)(x)=\int_{{\mR}_{\ge 0}^2}W(x,y,z)f(y)f(z)(1+f(x_*+f(x))\sqrt{y}\sqrt{z}{\rm d}y{\rm
	d}z, \dnumber \lb{Qplus}\\
&&Q^{-}(f)(x)=f(x)L(f)(x),\dnumber \lb{Qminus}\\
&&L(f)(x)=\int_{{\mR}_{\ge 0}^2}W(x,y,z)[f(x_*)(1+f(y)+f(z))]\sqrt{y}\sqrt{z}{\rm d}y{\rm
	d}z.\dnumber \lb{L}\ees
By the fact that $W(x,y,z)\le 4b^2_0W_H(x,y,z)$, it is easy to deduce that  for any $0\le f\in L^1({\mathbb R}_{+},\sqrt{x}{\rm d}x)$, the function $x\mapsto L(f)(x)$ is well-defined and
\be 0 \le L(f)(x)\le  4b^2_0\big(\sqrt{x}N(f)+M_{1/2}(f)+2[M_{-1/2}(f)]^2\big).\lb{newf4}
\ee
Where the moments for a nonnegative measurable function $f$ on ${\mathbb R}_{\ge 0}$ are defined in consistent with the case of measures: $M_p(f)=M_p(F)$ with ${\rm d}F(x)
=f(x)\sqrt{x}{\rm d}x$, i.e.
\be M_{p}(f)=\int_{{\mathbb R}_{+}}x^pf(x)\sqrt{x}{\rm d}x,\qquad  p\in(-\infty,\infty).\lb{moment-2}\ee  We also denote
$N(f)=M_0(f),
E(f)=M_1(f)$. And notice that $M_{-1/2}(f)=\int_{{\mathbb R}_{+}}f(x){\rm d}x.$
The following proposition gives an exponential-positive representation (i.e. Duhamel's formula) for a class of mild solutions. It has been used in the above Example.

\begin{proposition} \lb{proposition 3-3}
	Suppose $B({\bf {\bf v-v}_*},\omega)$ satisfy  Assumption \ref{assp} with $\eta\ge \fr{3}{2}$. Let
	$0\le f_0\in L^1({\mathbb R}_{\ge 0})$ have finite mass and energy. There exists a unique  conservative mild solution
	$f\in C([0, \infty); L^1({\mathbb R}_{\ge 0}))$
	 of Eq.(\ref{Equation1}) on $[0, \infty)$  satisfying
	$f(\cdot,0)=f_0$. Then there is a null set $Z\subset {\mathbb R}_{\ge 0}$ such that
	for all $x\in {\mathbb R}_{+}\setminus Z$ and all $t\in[0, \infty)$
	\be f(x,t)=f_0(x) e^{-\int_{0}^{t}L(f)(x,\tau){\rm d}\tau}
	+\int_{0}^{t}Q^{+}(f)(x,\tau)e^{-\int_{\tau}^{t}L(f)(x,s){\rm d}s}
	{\rm d}\tau\lb{Duhamel}\ee
	where $Q^{+}(f), L(f)$  are
	defined in (\ref{Qplus})-(\ref{L}).
\end{proposition}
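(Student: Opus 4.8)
The statement packages together an existence-uniqueness claim and the Duhamel identity (\ref{Duhamel}), and I would handle them in turn. \emph{Existence.} Since $f_0\in L^1({\mathbb R}_{\ge 0})$, the isotropic measure ${\rm d}F_0(x)=f_0(x)\sqrt{x}\,{\rm d}x$ is regular and satisfies $M_{-1/2}(F_0)=\int_{{\mathbb R}_{+}}f_0(x)\,{\rm d}x=\|f_0\|_{L^1}<\infty$, while finiteness of the mass and energy of $f_0$ gives $N(F_0),E(F_0)<\infty$. By Remark \ref{remark} there is a conservative measure-valued isotropic solution $F_t$ with initial datum $F_0$, and since $\eta\ge\fr32$, Theorem \ref{theorem1.10} upgrades it: $F_t$ is regular for all $t$, its density $f(\cdot,t)$ (defined by ${\rm d}F_t(x)=f(x,t)\sqrt{x}\,{\rm d}x$) is a conservative mild solution of Eq.(\ref{Equation1}) on $[0,\infty)$ lying in $C([0,\infty);L^1({\mathbb R}_{\ge 0}))$, with $f(\cdot,0)=f_0$. \emph{Uniqueness.} For uniqueness within this class I would run the $L^1$-stability argument of \cite{Lu2014}: the bound $W\le 4b_0^2W_H$ from Lemma \ref{lemma0} dominates every gain and loss collision integral by its hard-sphere analogue, so Lemmas 2.2 and 2.3 of \cite{Lu2014} apply, and the propagation $\sup_{t\le T}M_{-1/2}(F_t)<\infty$ (Theorem \ref{theorem1.3} with $p=\fr12$, using $M_{-1/2}(f(t))=\|f(t)\|_{L^1}$) supplies the a priori control needed to close a Gronwall estimate on $\|f(t)-g(t)\|_{L^1}$, forcing $f=g$ for two such solutions with the same datum.

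For the representation, the point is that (\ref{Duhamel}) is the variation-of-constants formula for the linear-in-$f(x,\cdot)$ ODE encoded in the mild formulation. I would fix the $t$-independent null set $Z$ furnished by Definition \ref{definition 1-1}, so that for every $x\in{\mathbb R}_{+}\setminus Z$ and every $t\ge0$ one has $f(x,t)=f_0(x)+\int_{0}^{t}\big(Q^{+}(f)(x,\tau)-f(x,\tau)L(f)(x,\tau)\big){\rm d}\tau$, with the two nonnegative integrands separately of finite time-integral on every $[0,T]$. Fixing such an $x$ and abbreviating $a(\tau)=L(f)(x,\tau)$, $b(\tau)=Q^{+}(f)(x,\tau)$, the bound (\ref{newf4}) combined with conservation of mass and energy and the continuity (hence local boundedness) of $s\mapsto\|f(s)\|_{L^1}$ yields $0\le a(\tau)\le C_{x,T}<\infty$ for $\tau\in[0,T]$, whereas $b\ge0$ lies in $L^1_{\rm loc}([0,\infty))$ because $Q^{+}(f)(x,\cdot)$ is dominated by the integrand in the finiteness condition of Definition \ref{definition 1-1}. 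Hence $t\mapsto f(x,t)$ is locally absolutely continuous on $[0,\infty)$ and satisfies $\fr{{\rm d}}{{\rm d}t}f(x,t)=b(t)-a(t)f(x,t)$ for a.e. $t$.

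The final step is the integrating-factor manipulation: with $A(t)=\int_0^t a(s)\,{\rm d}s$ (locally Lipschitz and nondecreasing), the product $t\mapsto e^{A(t)}f(x,t)$ is locally absolutely continuous with a.e. derivative $e^{A(t)}b(t)$; integrating from $0$ to $t$ and using $A(0)=0$ gives $e^{A(t)}f(x,t)-f_0(x)=\int_0^t e^{A(\tau)}b(\tau)\,{\rm d}\tau$, which rearranges to exactly (\ref{Duhamel}), and since $Z$ does not depend on $t$ the identity holds simultaneously for all $t\ge0$ on ${\mathbb R}_{+}\setminus Z$. I do not expect a genuine obstacle: the only points needing care are (i) invoking the $L^1$-uniqueness of \cite{Lu2014} under the sole hypothesis Assumption \ref{assp} with $\eta\ge\fr32$ — legitimate precisely because, after Lemma \ref{lemma0}, all collision integrals are controlled by the hard-sphere kernel together with the propagated negative moment of Theorem \ref{theorem1.3} — and (ii) verifying, for the fixed $x$, the local integrability in time of $a(\cdot)$ and $b(\cdot)$ so that the pointwise-in-$x$ linear ODE and the product rule for absolutely continuous functions are valid, which reduces to the estimates already recorded in (\ref{newf4}) and Theorem \ref{theorem1.3}.
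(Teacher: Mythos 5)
Your proposal is correct and follows essentially the same route as the paper: existence and uniqueness are obtained from Theorem \ref{theorem1.10} and the stability Theorem \ref{theorem1.11} (your re-derivation of uniqueness via the Gronwall argument of \cite{Lu2014} is just the content of that theorem), and the Duhamel identity is then the standard integrating-factor computation for the pointwise ODE $\p_t f(x,t)=Q^{+}(f)(x,t)-f(x,t)L(f)(x,t)$ on the $t$-independent null set's complement, using (\ref{newf4}) and $f\in C([0,\infty);L^1)$ to get local boundedness of $L(f)(x,\cdot)$ and hence local absolute continuity of $t\mapsto f(x,t)e^{\int_0^tL(f)(x,\tau){\rm d}\tau}$. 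No gaps.
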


\noindent\begin{proof}Since $f_0\in L^1({\mathbb R}_{\ge 0})$ means $M_{-1/2}(f_0)<\infty$. So using Theorem \ref{theorem1.10}, Theorem \ref{theorem1.11} we know there there exists a unique  conservative mild solution
	$f\in C([0, \infty); L^1({\mathbb R}_{\ge 0}))$
	of Eq.(\ref{Equation1}) on $[0, \infty)$  satisfying
	$f(\cdot,0)=f_0$. By definition of mild solutions and $Q(f)=Q^{+}(f)-fL(f)$ there is a null set $Z\subset {\mathbb R}_{\ge 0}$ which is independent of $t$
	such that for every $x\in {\mathbb R}_{\ge 0}\setminus Z$
	\be \fr{\p }{\p t}f(x,t)=Q^{+}(f)(x,t)-f(x,t)L(f)(x,t)
	\lb{diff}\ee
	for almost every $t\in[0,  \infty)$.
	Applying (\ref{newf4}) and $f\in  C([0,  \infty); L^1({\mathbb R}_{\ge 0}))$  we have \be
	\sup_{t\in [0, T]}L(f)(x,t)\le  \sup_{t\in [0, T]}( \sqrt{x}N(f(t))+M_{1/2}(f(t))+2\|f(t)\|_{L^1}^2 )<\infty\lb{loss1}\ee
	for all $T\in (0, \infty)$ and all $x>0$. Therefore, for every
	$x\in {\mathbb R}_{\ge 0}\setminus Z$,
	the function  $t\mapsto f(x,t)e^{\int_{0}^{t}L(f)(x,\tau){\rm d}\tau}$ is also  absolutely continuous on $[0, T]$ for all $T\in (0,  \infty)$ and thus
	the Duhamel's formula (\ref{Duhamel})
	follows from the differential equation (\ref{diff}).
\end{proof}
\par
We give a $L^{\infty}$ estimates for bounded mild solutions to end this section.
\begin{proposition} \lb{proposition 3-4}
		Suppose $B({\bf {\bf v-v}_*},\omega)$ satisfy Assumption \ref{assp} with $\eta\ge \fr{3}{2}$. Let $0\le f_0\in L^1({\mathbb R}_{\ge 0})$ have finite mass and energy and let
	$f\in C([0, \infty); L^1({\mathbb R}_{\ge 0}))$
	be the unique conservative mild solution of Eq.(\ref{Equation1}) on $[0,\infty)$  satisfying
	$f(\cdot,0)=f_0$. Suppose in addition $f_0\in L^{\infty}({\mathbb R}_{\ge 0})$. Then
	$f(\cdot, t)\in L^{\infty}({\mathbb R}_{\ge 0})$ for all $t\in [0, \infty)$ and
	there holds the following estimate: for all  $t\in[0, \infty)$,
	\bes\|f(t)\|_{L^{\infty}}
	\!\! &\le &\!\! (1+\|f_0\|_{L^{\infty}})\exp\Big(8b_0^2\int_{0}^{t}\|f(\tau)\|_{L^1}^2
	{\rm d}\tau\Big),\lb{finfty1}\ees
	\end{proposition}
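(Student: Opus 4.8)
The plan is to run a Duhamel--Gronwall argument on the exponential--positive representation (\ref{Duhamel}) of Proposition \ref{proposition 3-3}. Since $L(f)(x,\tau)\ge 0$ by (\ref{L}), both exponential factors in (\ref{Duhamel}) are $\le 1$, so with the null set $Z$ of that proposition I would first record that for all $x\in{\mR}_{+}\setminus Z$ and all $t\ge 0$,
$$f(x,t)\le \|f_0\|_{L^{\infty}}+\int_0^t Q^{+}(f)(x,\tau)\,{\rm d}\tau.$$
Everything then hinges on an $x$--uniform pointwise bound for the gain term.

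To get it I would use (\ref{Qplus}), namely $Q^{+}(f)(x,\tau)=\int_{{\mR}_{\ge 0}^2}W(x,y,z)f(y,\tau)f(z,\tau)\big(1+f(x_*,\tau)+f(x,\tau)\big)\sqrt{y}\sqrt{z}\,{\rm d}y\,{\rm d}z$, together with the basic comparison $W(x,y,z)\le 4b_0^2W_H(x,y,z)$ and $W_H(x,y,z)\sqrt{y}\sqrt{z}=\fr{1}{\sqrt{x}}\min\{\sqrt{x},\sqrt{x_*},\sqrt{y},\sqrt{z}\}\le 1$. Bounding $1+f(x_*,\tau)+f(x,\tau)\le 1+2\|f(\tau)\|_{L^{\infty}}\le 2\big(1+\|f(\tau)\|_{L^{\infty}}\big)$ and using $\int_{{\mR}_{\ge 0}}f(y,\tau)\,{\rm d}y=M_{-1/2}(f(\tau))=\|f(\tau)\|_{L^1}$ then yields
$$Q^{+}(f)(x,\tau)\le 4b_0^2\big(1+2\|f(\tau)\|_{L^{\infty}}\big)\|f(\tau)\|_{L^1}^2\le 8b_0^2\big(1+\|f(\tau)\|_{L^{\infty}}\big)\|f(\tau)\|_{L^1}^2,$$
a bound that is independent of $x$. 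Substituting this into the Duhamel inequality, taking the essential supremum over $x$, and adding $1$ gives
$$1+\|f(t)\|_{L^{\infty}}\le 1+\|f_0\|_{L^{\infty}}+8b_0^2\int_0^t\|f(\tau)\|_{L^1}^2\big(1+\|f(\tau)\|_{L^{\infty}}\big)\,{\rm d}\tau,$$
and since $f\in C([0,\infty);L^1({\mR}_{\ge 0}))$ makes $\tau\mapsto\|f(\tau)\|_{L^1}^2$ locally bounded, Gronwall's inequality delivers $1+\|f(t)\|_{L^{\infty}}\le\big(1+\|f_0\|_{L^{\infty}}\big)\exp\big(8b_0^2\int_0^t\|f(\tau)\|_{L^1}^2\,{\rm d}\tau\big)$, which is slightly stronger than (\ref{finfty1}).

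The hard part will not be this computation but the fact that the Gronwall step presupposes $\|f(\cdot,\tau)\|_{L^{\infty}}<\infty$ on the interval in question, whereas a priori the mild solution is only controlled in $L^1$. I would settle this by carrying out the whole estimate first on the bounded--kernel approximants $f^K$ of Definition \ref{definition3.1} (taking $f_0^K\equiv f_0$): for these the gain and loss operators are bounded on $L^{\infty}$, so a short--time fixed--point argument plus the a priori inequality above — which forbids blow-up of $\|f^K(\cdot,t)\|_{L^{\infty}}$ in finite time — keep $f^K(\cdot,t)\in L^{\infty}$ for all $t$, with the estimate $\|f^K(t)\|_{L^{\infty}}\le (1+\|f_0\|_{L^{\infty}})\exp(8b_0^2\int_0^t\|f^K(\tau)\|_{L^1}^2\,{\rm d}\tau)$ holding uniformly in $K$ (the relevant kernels still satisfying $\le 4b_0^2W_H$). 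Passing to the limit via Theorem \ref{weak stability} and the uniqueness of the mild solution (so that $f^K(\cdot,t)\,\sqrt{\cdot}$ converges weakly to $F_t$ with density $f(\cdot,t)$), together with the lower semicontinuity of $\|\cdot\|_{L^{\infty}}$ under this convergence and of the exponent under convergence of masses, then transfers the bound to $f$ and gives (\ref{finfty1}). Equivalently one could argue directly on $f$ by a bootstrap on $T^{*}:=\sup\{T>0:\ {\rm ess\,sup}_{s\in[0,T]}\|f(\cdot,s)\|_{L^{\infty}}<\infty\}$, the displayed a priori inequality ruling out $T^{*}<\infty$; this finiteness issue is the only real subtlety.
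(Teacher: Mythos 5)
Your pointwise bound on the gain term is correct and is essentially the same computation the paper uses ($W(x,y,z)\sqrt{y}\sqrt{z}\le 4b_0^2$ plus splitting the quantum factor $1+f(x_*)+f(x)$), and it does yield the constant $8b_0^2$ in (\ref{finfty1}). The difficulty is exactly the one you flag at the end, and neither of your remedies closes it as stated. The bootstrap on $T^{*}$ cannot get started: to know $T^{*}>0$ you already need ${\rm ess\,sup}_{s\in[0,T]}\|f(\cdot,s)\|_{L^{\infty}}<\infty$ for \emph{some} $T>0$, and the Duhamel inequality gives no such information because $Q^{+}(f)(x,\tau)$ contains the factors $f(x,\tau)$ and $f(x_*,\tau)$, which are not controlled by $\|f(\tau)\|_{L^1}$ alone; the a priori differential inequality is vacuous until finiteness on some interval is known. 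The approximation route could in principle be completed, but as sketched it presupposes an $L^1\cap L^{\infty}$ local well-posedness theory for the cutoff kernels $B_K$ (a fixed point in $L^1\cap L^{\infty}$, identification of that solution with the approximate solution of Definition \ref{definition3.1}, propagation of the bound uniformly in $K$, and the weak-stability passage to the limit). None of this is available off the shelf in the references the paper uses, so this is a genuine missing block of work rather than a routine verification.

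The paper avoids the finiteness issue entirely by running Gronwall in $L^1$ on the \emph{excess} rather than in $L^{\infty}$. Setting $K(t)=(1+\|f_0\|_{L^{\infty}})\exp(2\int_0^t a(\tau){\rm d}\tau)$ with $a(\tau)=4b_0^2\|f(\tau)\|_{L^1}^2$, one writes the mild equation for $(f(x,t)-K(t))_{+}$, integrates in $x$, drops $Q^{-}\ge 0$, and uses the same splitting of $1+f(x_*)+f(x)$ to bound $\int Q^{+}(f)\,{\bf 1}_{\{f>K\}}{\rm d}x$ by $2a(\tau)\int (f-K)_{+}{\rm d}x$ plus a term that cancels exactly against the contribution $-\int 2K(\tau)a(\tau){\bf 1}_{\{f>K\}}{\rm d}x$ coming from $K'$. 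The quantity $\int_{{\mR}_{\ge 0}}(f(x,t)-K(t))_{+}{\rm d}x\le\|f(t)\|_{L^1}$ is finite for free, so Gronwall applies and forces it to vanish, giving $f(\cdot,t)\le K(t)$ a.e. This one-line device is the idea your argument is missing; with it, no a priori $L^{\infty}$ information and no approximation scheme are needed.
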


\noindent\begin{proof}Let $K(t)$ be the right hand side of (\ref{finfty1}), i.e.
	$$K(t):=(1+\|f_0\|_{L^{\infty}})e^{2\int_{0}^{t}a(\tau){\rm d}\tau}, \quad a(t):=4b_0^2\|f(t)\|_{L^1}^2,\quad  t\in[0, \infty).$$
	By definition of mild solutions \textit{}and $f(x,0)=f_0(x)\le K(0)$ for all $x\in {\mathbb R}_{\ge 0}\setminus Z$ (here and below $Z\subset {\mathbb R}_{\ge 0}$ denotes any null set which is independent of time variable) we have for all $t>0$
	$$
	(f(x,t)-K(t))_{+}=\int_{0}^{t}( Q(f)(x,\tau)-2K(\tau)a(\tau) ){\bf 1}_{\{f(x,\tau)>K(\tau)\}}{\rm d}\tau.$$
	Taking integration with respect to $x\in{\mathbb R}_{+}$
	and omitting the negative part ${\cal Q}^{-}(f)\ge 0$ gives
	\bes
	\int_{{\mathbb R}_{\ge 0}}(f(x,t)-K(t))_{+}
	{\rm d}x \!\!&\le&\!\!
	\int_{0}^{t}{\rm d}\tau\int_{{\mathbb R}_{\ge 0}}Q^{+}(f)(x,\tau){\bf 1}_{\{f(x,\tau)>K(\tau)\}}
	{\rm d}x\nonumber \\
	\!\! & - &\!\!\int_{0}^{t}{\rm d}\tau\int_{{\mathbb R}_{\ge 0}}
	2K(\tau)a(\tau){\bf 1}_{\{f(x,\tau)>K(\tau)\}}
	{\rm d}x.\nonumber\ees
	For the integrand $Q^{+}(f)(x,\tau)$, we have
	$$ f(y,\tau)f(z,\tau)(1+f(x_*,\tau)+f(x,\tau))\le f(y,\tau)f(z,\tau)(f(x_*,\tau)-K(\tau))_{+}$$
	$$+f(y,\tau)f(z,\tau)(f(x,\tau)-K(\tau))_{+}+2K(\tau)f(y,\tau)f(z,\tau).
	$$
  Using the fact that $W(x,y,z)\sqrt{y}\sqrt{z}\le 4b^2_0$, we can obtain that
	\bes \int_{0}^{t}{\rm d}\tau\int_{{\mathbb R}_{\ge 0}}Q^{+}(f)(x,\tau){\bf 1}_{\{f(x,\tau)>K(\tau)\}}{\rm d}x\!\!
	&\le& \!\!\!\int_{0}^{t}2a(\tau){\rm d}\tau
	\int_{{\mathbb R}_{\ge 0}}
	( f(x,\tau)-K(\tau) )_{+}
	{\rm d}x\quad\nonumber\\
	\!\! &+&\!\!\!
	\int_{0}^{t}{\rm d}\tau\int_{{\mathbb R}_{\ge 0}}2K(\tau)a(\tau){\bf 1}_{\{f(x,\tau)>K(\tau)\}}{\rm d}x.\quad \nonumber\ees
  It follows that for all $t\in[0,\infty)$
	\beas&&
	\int_{{\mathbb R}_{\ge 0}}(f(x,t)-K(t))_{+}
	{\rm d}x\le
	\int_{0}^{t}2a(\tau){\rm d}\tau
	\int_{{\mathbb R}_{\ge 0}}
	(f(x,\tau)-K(\tau))_{+}
	{\rm d}x.\eeas
	By Gronwall inequality  we conclude
	$\int_{{\mathbb R}_+}(f(x,t)-K(t))_{+}
	{\rm d}x=0$ for all $t\in[0,\infty).$
	This implies $f(\cdot,t)\in L^{\infty}({\mathbb R}_{\ge 0})$ and
	$\|f(t)\|_{L^{\infty}}\le K(t)$ for all $t\in[0,\infty)$,  i.e. (\ref{finfty1}) holds true.
\end{proof}
\vskip4mm

{\bf Acknowledgement.}  This
work was partially supported by National Natural Science Foundation of China Grant No.11771236.


\begin{thebibliography}{99}
	\bibitem{A} Arkeryd, L.: On low temperature kinetic theory;
	spin diffusion, Bose Einstein
	condensates, anyons.  J. Statist. Phys. {\bf 150}, 1063-1079 (2013)
	
	\bibitem{AN} Arkeryd, L., and Nouri, A.: Bose condensates in
	interaction with excitations: a
	kinetic model.  Comm. Math. Phys. {\bf 310}, 765-788  (2012)
	
	\bibitem{BV} Bandyopadhyay, J., Vel\'{a}zquez, J.J.L.:
	Blow-up rate estimates for the solutions of the bosonic Boltzmann-Nordheim
	equation.  J. Math. Phys. {\bf 56} (2015), 063302, 27 pp. doi: 10.1063/1.4921917.
	
	\bibitem{weak-coupling} Benedetto, D., Pulvirenti, M., Castella, F., Esposito, R.: On
	the weak-coupling limit for bosons and fermions. Math. Models Methods
	Appl. Sci. {\bf 15}, 1811-1843  (2005)
	
	\bibitem{Briant-Einav} Briant, M., Einav, A.: On the Cauchy problem for the
	homogeneous Boltzmann-Nordheim equation for bosons: local existence, uniqueness
	and creation of moments. J. Statist. Phys.  {\bf 163}, 1108-1156 (2016)
	
\bibitem{Cai-Lu} Cai,S.,Lu, X.:
The spatially homogeneous Boltzmann equation for Bose-Einstein particles: rate of strong convergence to equilibrium. J. Statist. Phys. {\bf 175}, 289-350 (2019)
	
	\bibitem {Chapman and Cowling} Chapman, S., Cowling, T.G.: {\it The Mathematical
		Theory of Non-Uniform Gases}. Third Edition (Cambridge University
	Press, 1970).
	

	
	\bibitem{ESY} Erd\"{o}s, L., Salmhofer, M., Yau, H.-T.: On the
	quantum Boltzmann equation. J. Statist. Phys. {\bf 116}, 367-380  (2004).
	
	
	\bibitem {EMV2} Escobedo, M., Mischler, S., Vel\'{a}zquez, J.J.L.:  Singular solutions
	for the Uehling-Uhlenbeck equation. Proc. Roy. Soc. Edinburgh {\bf 138A},
	67-107  (2008)
	
	\bibitem {EV1} Escobedo, M., Vel\'{a}zquez, J.J.L.:
	On the blow up and condensation of supercritical
	solutions of the Nordheim equation for bosons.   Comm. Math. Phys.
	{\bf 330}, 331-365 (2014)
	
	\bibitem {EV2} Escobedo, M., Vel\'{a}zquez, J.J.L.: Finite time blow-up and
	condensation for the bosonic Nordheim equation.
	Invent. Math. {\bf 200}, 761-847  (2015)
	
	
	\bibitem {JPR} Josserand, C., Pomeau, Y., Rica, S.: Self-similar singularities in
	the kinetics of condensation.  J. Low Temp. Phys. {\bf 145}, 231-265 (2006)
	
	\bibitem{LiLu} Li, W., Lu, X.: Global existence of solutions of the Boltzmann equation
	for Bose-Einstein particles with anisotropic initial data.
	J. Funct. Anal. {\bf 276},  231-283 (2019)
	
	\bibitem{Lu2000} Lu, X.: A modified Boltzmann equation for Bose-Einstein particles:
	isotropic solutions and long-time behavior. J. Statist. Phys. {\bf 98},  1335-1394 (2000)
		\bibitem{LW} Lu, X.,Wennberg,B.: Solutions with increasing energy for the spatially homogeneous Boltzmann equation. Nonlinear Anal.Real World Appl.{\bf 3},  243-258 (2002)
	\bibitem{Lu2004} Lu, X.: On isotropic distributional solutions to the Boltzmann
	equation for Bose-Einstein particles. J. Statist. Phys. {\bf
		116}, 1597-1649  (2004)
	
	\bibitem{Lu2005} Lu, X.: The Boltzmann equation for Bose-Einstein particles: velocity concentration and
	convergence to equilibrium. J. Statist. Phys. {\bf 119}, 1027-1067  (2005)
	\bibitem{LM} Lu, X and C. Mouhot, On measure solutions of the Boltzmann equation, part I: moment production and stability estimates,  {\it J. Differential Equations} {\bf 252}:  3305--3363 (2012).
	\bibitem{Lu2013} Lu, X.: The Boltzmann equation for Bose-Einstein particles:
	condensation in finite time.  J. Statist. Phys. {\bf 150}, 1138-1176  (2013)
	\bibitem{Lu2014} Lu, X, The Boltzmann equation for Bose-Einstein particles:
	regularity and condensation,  J. Statist. Phys. {\bf 156} (2014), 493-545.
	\bibitem{Lu2016} Lu, X.: Long time convergence of the Bose-Einstein condensation.
	J. Statist. Phys. {\bf 162}, 652-670 (2016)
	
	
	\bibitem{Lu2018} Lu, X.:
	Long time strong convergence to Bose-Einstein distribution for low temperature.  Kinet. Relat. Models
	{\bf 11}, 715-734 (2018)
	\bibitem{LS} Lukkarinen, J., Spohn, H.:
	Not to normal order--notes on the kinetic limit for weakly interacting quantum fluids.
	J. Statist. Phys. {\bf 134}, 1133-1172  (2009)
	
	\bibitem{MP2005}
	Markowich, P.A., Pareschi, L.:  Fast conservative and entropic
	numerical methods for the boson Boltzmann equation. Numer. Math. {\bf
		99},  509-532  (2005)
	
	\bibitem {Nordheim} Nordheim, L.W.: On the kinetic methods in the new statistics
	and its applications in the electron theory of conductivity.
	Proc. Roy. Soc. London Ser. A  {\bf 119}, 689-698  (1928)
	
	\bibitem {Nouri} Nouri, A:  Bose-Einstein condensates at very low temperatures:
	a mathematical result in the isotropic case.
	Bull. Inst. Math. Acad. Sin. (N.S.) {\bf 2}, 649-666  (2007)
	
	\bibitem {Rudin} Rudin, W: {\it Real and Complex Analysis}. Third edition. McGraw-Hill Book Co.,
	New York, 1987. xiv+416 pp. ISBN: 0-07-054234-1 00A05 (26-01 30-01 46-01)
	
	\bibitem {Semikov and Tkachev1} Semikov, D.V.,
	Tkachev, I.I.: Kinetics of Bose condensation. Phys. Rev. Lett.
	{\bf 74}, 3093-3097 (1995)
	
	
	\bibitem {Semikov and Tkachev2} Semikov, D.V.,  Tkachev, I.I.:
	Condensation of Bose in the kinetic regime. Phys. Rev. D {\bf
		55}, 489-502 (1997)
	
	\bibitem {SH} Spohn, H.: Kinetics of the Bose-Einstein condensation.
	Physica D {\bf 239}, 627-634  (2010)
	
	\bibitem {Uehling and Uhlenbeck}Uehling, E.A., Uhlenbeck, G.E.:
	Transport phenomena in Einstein-Bose and Fermi-Dirac gases, I, Phys. Rev.
	{\bf 43}, 552-561  (1933)
	\bibitem{Wennberg}Wennberg,B.: Entropy dissipation and moment production for the Boltzmann equation. J. Statist. Phys. {\bf 86}, 1053-1066 (1997)
	\\
	
	
	
	
	
\end{thebibliography}
\end{document}